\newtheorem{thm}{Theorem}[section]
\newtheorem{defini}{Definition}[section]
\newtheorem{lem}{Lemma}[section]
\newtheorem{prop}{Proposition}[section]
\newtheorem{coro}{Corollary}[section]
\def \R {\mathbb{R} }
\begin{document}
\title[ ]{On Fractional Anisotropic Musielak-Sobolev Spaces with Applications to Nonlocal Eigenvalue Problems}
\author[Mohammed SRATI]
{Mohammed SRATI}
\address{ Mohammed SRATI\newline
 Laboratory of Applied Mathematics of the Oriental Region (LaMAO), High School of Education and Formation (ESEF), University Mohammed First, Oujda,
 Morocco.}
\email{srati93@gmail.com}

\subjclass[2010]{35R11, 35P30, 35J20, 58E05.}
\keywords{Fractional anisotropic Musielak-Sobolev spaces, eigenvalue problems, variational Methods.}
\maketitle
\begin{abstract}
In this paper, we introduce and study a new class of fractional modular function spaces, called \emph{Fractional Anisotropic Musielak--Sobolev Spaces}, which generalize both the fractional Anisotropic Orlicz--Sobolev spaces and the Anisotropic fractional Sobolev spaces with variable exponent. These spaces are designed to handle anisotropic and heterogeneous behaviors that naturally arise in nonlocal and nonlinear models. We develop their fundamental properties and embedding results, establishing a solid variational framework. As an application, we investigate a class of nonlocal anisotropic eigenvalue problems involving variable growth and direction-dependent fractional integro-differential operators. We prove the existence of eigenvalues by means of critical point theory and modular analysis. Our results extend and unify several existing models in the theory of nonlocal partial differential equations.
\end{abstract}
\tableofcontents
\section{Introduction}

In recent years, fractional partial differential equations (PDEs) have gained increasing attention due to their ability to model various phenomena involving long-range interactions, memory effects, and anomalous diffusion processes. These equations provide a more accurate and flexible framework than classical local models in applications such as image processing, phase transitions, materials science, and finance. The nonlocal nature of fractional operators allows them to capture complex interactions that cannot be described by integer-order derivatives.

A particularly rich framework for studying such problems is provided by \emph{modular function spaces}, such as \emph{fractional Orlicz--Sobolev spaces} and their generalizations (see \cite{sr5, 3, srati3, SR, SRT, chine, SRH, bh1,sr_mo, sal1, sal2, sal3}). These spaces allow for the treatment of nonlinearities that go beyond the standard power-type growth, accommodating, for example, exponential-type behaviors. Among them, \emph{Musielak--Orlicz spaces} stand out due to their ability to model media with nonhomogeneous and spatially dependent characteristics. When extended to the \emph{fractional setting}, Musielak--Orlicz--Sobolev spaces enable the rigorous analysis of fractional operators with nonstandard growth and variable exponents, reflecting more faithfully the complexity of real-world phenomena.

Another fundamental aspect in modern analysis is \emph{anisotropy}, which appears naturally in models where the behavior of a system depends on direction. Anisotropic problems arise in many physical and engineering applications, such as crystal growth, porous media flow, and image reconstruction. From a mathematical point of view, anisotropic spaces introduce direction-dependent metrics or integrability conditions, making the analysis more delicate but also more representative of real anisotropic media.


    Anisotropy in mathematical analysis naturally arises in continuum mechanics, notably in the modeling of composite materials and heterogeneous media. It appears when physical properties depend on direction. This idea leads to differential operators whose coefficients or derivatives are not invariant under rotations — these are called anisotropic operators.\\
    
    In this context, a foundational contribution was made by Troisi in 1969 \cite{Troi}, who conducted a rigorous study of anisotropic operators within the framework of elliptic and parabolic equations with variable coefficients. These operators often take the form
    \[
    -\sum_{i=1}^N \partial_{x_i}\left(a_i(x)\, \partial_{x_i} u\right),
    \]
    where each spatial direction can exhibit distinct behavior, reflecting the anisotropic nature of the medium. To address such problems, Troisi introduced the notion of \textit{anisotropic Sobolev spaces}, defined as
    \[
    W^{1,\vec{p}}(\Omega) = \left\{ u \in L^{p_i}(\Omega) : \partial_{x_i} u \in L^{p_i}(\Omega) \text{ for each } i \right\},
    \]
    where \( \vec{p} = (p_1, \dots, p_N) \) is a vector of constant exponents, allowing directional dependence of regularity. These spaces have proven essential in analyzing PDEs where the smoothness of solutions may differ by axis.\\
    
    Building on this framework, a more recent development extended the theory to accommodate variable exponent growth in each direction. Specifically, the \textit{anisotropic variable exponent Sobolev space} \( W^{1, \vec{p}(\cdot)}(\Omega) \), where \( \vec{p}(\cdot) = \left(p_1(\cdot), \ldots, p_N(\cdot)\right) \), was introduced \cite{Fan}. This generalization enables the modeling of media with spatially heterogeneous anisotropic behavior. Correspondingly, a new differential operator appeared in the literature:
    \[
    \Delta_{\vec{p}(x)}(u) = \sum_{i=1}^N \partial_{x_i} \left( \left| \partial_{x_i} u \right|^{p_i(x)-2} \partial_{x_i} u \right),
    \]
    which is referred to as the \textit{anisotropic variable exponent} \( \vec{p}(\cdot) \)-\textit{Laplace operator}. This operator generalizes both the classical \( \Delta_{p(x)} \) and the anisotropic constant exponent \( \Delta_{\vec{p}} \) operators; see also \cite{miha1,miha2} and the references therein.\\
    
    More recently, Bahdouni et al in \cite{bahbah} and  Azroul et al in \cite{kamali} extended the concept of anisotropy to the framework of \textit{fractional Sobolev spaces with variable exponents}. They introduced a fractional version of the anisotropic variable exponent Laplacian, which captures both the nonlocal and directionally heterogeneous nature of certain physical phenomena. The corresponding operator is defined by
    \[
    \left[(-\Delta)_{\vec{p}(x, \cdot)}^s u\right](x) = p.v. \sum_{i=1}^N \int_{\mathbb{R}^N} \frac{|u(x)-u(y)|^{p_i(x, y)-2}(u(x)-u(y))}{|x-y|^{N+s p_i(x, y)}} \, dy,
    \]
   $\text{for all } x \in \mathbb{R}^N$ where \( \vec{p}(x,y) = (p_1(x,y), \dots, p_N(x,y)) \) denotes a family of variable exponents depending on both spatial variables \( x \) and \( y \), and \( s \in (0,1) \) is the fractional order. This operator can be seen as a \textit{nonlocal and anisotropic generalization} of the classical \( \vec{p}(\cdot) \)-Laplace operator, and provides a natural framework for studying nonlocal problems in anisotropic and heterogeneous media.\\
    
    In the same direction, we proposed a further extension of the aforementioned framework by developing a theory within the setting of fractional anisotropic Orlicz–Sobolev spaces \cite{srtanis}. This approach allows for even greater flexibility in modeling media with highly nonlinear and nonstandard growth conditions. The corresponding nonlocal integro-differential operator of elliptic type is defined by
    \[
    (-\Delta)^s_{\vec{a}(\cdot)} u(x) = 2 \lim_{\varepsilon \searrow 0} \sum_{i=1}^N \int_{\mathbb{R}^N \setminus B_\varepsilon(x)} a_i\left( \frac{|u(x)-u(y)|}{|x-y|^{s}} \right) \frac{u(x)-u(y)}{|x-y|^{s}} \frac{dy}{|x-y|^{N+s}},
    \]
    $\text{for all } x \in \mathbb{R}^N$ where each \( a_i \) is an Orlicz-type function associated with the \( i \)-th direction. This operator generalizes previous models by incorporating both \textit{anisotropy} and \textit{nonstandard growth} in a nonlocal framework, and opens the way for the analysis of a wide class of nonlinear integro-differential problems in anisotropic Orlicz-type fractional settings.


The aim of this paper is to study a class of nonlinear, nonlocal, anisotropic eigenvalue problems involving a generalized fractional operator acting in a newly introduced function space, the \emph{Fractional Anisotropic Musielak--Sobolev Space}. This space extends and unifies the existing frameworks of the \emph{Fractional Anisotropic Orlicz–Sobolev Space}, which we introduced in  \cite{srtanis}, and the \emph{Anisotropic Fractional Sobolev Spaces with Variable Exponent}, introduced by Azroul et al. in \cite{kamali,bahbah}, thereby enabling the treatment of a broader class of problems involving nonstandard growth conditions and directional anisotropy.

We consider the following nonlocal eigenvalue problem:
\[
(P_a)\left\{
\begin{aligned}
& M\left( \sum_{i=1}^{N} \int_{Q} \varPhi_{x,y}^i\left( \frac{|u(x) - u(y)|}{|x - y|^{s_i}} \right) \frac{dxdy}{|x - y|^N} \right) 
  \sum_{i=1}^{N} (-\Delta)^{s_i}_{a^i_{(x,\cdot)}} u(x) \\[1.5ex]
& = \lambda |u(x)|^{q(x)-2} u(x) \quad \text{in } \Omega, \\[1.5ex]
& u(x) = 0 \quad \text{in } \mathbb{R}^N \setminus \Omega,
\end{aligned}
\right.
\]
where $\Omega \subset \mathbb{R}^N$ ($N \geq 1$) is a bounded open set with Lipschitz boundary, and 
\[
Q = \mathbb{R}^{2N} \setminus (C\Omega \times C\Omega), \quad \text{with} \quad C\Omega := \mathbb{R}^N \setminus \Omega.
\]
The function $q: \overline{\Omega} \times \overline{\Omega} \to (1, +\infty)$ is a bounded continuous function representing a variable exponent, $\lambda > 0$ is a real parameter, and $s_i \in (0, 1)$ for each $i \in \{1, \ldots, N\}$.

The operators $(-\Delta)^{s_i}_{a^i_{(x,\cdot)}}$ appearing in this equation are fractional anisotropic nonlocal integro-differential operators of elliptic type, defined for every $x \in \mathbb{R}^N$ and $i \in \{1, \ldots, N\}$ by:
\[
(-\Delta)^{s_i}_{a^i_{(x,\cdot)}} u(x) = 2 \lim_{\varepsilon \searrow 0} \int_{\mathbb{R}^N \setminus B_\varepsilon(x)} a^i_{(x,y)}\left( \frac{|u(x) - u(y)|}{|x - y|^{s_i}} \right) \frac{u(x) - u(y)}{|x - y|^{s_i}} \frac{dy}{|x - y|^{N + s_i}},
\]
where each $a^i_{(x,y)} : \mathbb{R} \to \mathbb{R}$ is a nonlinear function to be specified later. These operators are designed to capture directional dependence (anisotropy), nonlocal interactions, and nonlinear constitutive laws via the structure of the kernel $a^i_{(x,y)}$.

The modular function $\varPhi_{x,y}^i$ models anisotropic and heterogeneous energy densities, potentially varying with position and direction,  generalizing classical $p$-Laplace and Orlicz-type behavior. The function $M: [0, \infty) \to [0, \infty)$ represents a nonlinearity that can govern the intensity of diffusion.

This problem setup allows for modeling phenomena in heterogeneous and anisotropic media, where the behavior of the solution is influenced by both the magnitude and direction of interactions at a distance, as well as position-dependent growth conditions.

The main contributions of this paper are threefold:
\begin{itemize}
  \item We define and analyze the structure of the Fractional Anisotropic Musielak--Sobolev Space, providing embedding theorems and modular inequalities appropriate to our setting.
  \item We give a precise variational formulation of problem $(P_a)$ and establish conditions under which weak solutions exist, based on direct methods in the calculus of variations and compactness arguments.
  \item We investigate spectral properties of the associated operator and discuss the behavior of the first eigenvalue in this generalized setting.
\end{itemize}

The structure of the paper is as follows: Section~2 lays the groundwork with mathematical
preliminaries, including definitions and key properties of fractional Musielak-Orlicz Sobolev spaces. In Section~3, we introduce the necessary background on fractional modular spaces and develop the theory of the Fractional Anisotropic Musielak--Sobolev Space, including several qualitative properties such as modular inequalities and embedding results. Section~4 is devoted to the variational formulation of the eigenvalue problem \hyperref[P]{$(P_a)$}, where we present our main results and, using variational techniques, establish the existence or nonexistence of eigenvalues.


\section{Variatoinal setting and preliminaries results}

To deal with this situation we define the fractional Musielak-Sobolev space to investigate Problem \hyperref[P]{$(P_a)$}. Let us recall the definitions and some elementary properties of this spaces. We refer the reader to \cite{benkirane,benkirane2} for further reference and for some of the proofs of the results in this section.\\

 Let $\Omega$ be an open subset of $\R^N$, $N\geqslant 1$. We assume that any $i=1,...,N,$    $(x,y,t)\mapsto a^i_{(x,y)}(t):=a^i(x,y,t) : \overline{\Omega}\times\overline{\Omega}\times \R\longrightarrow \R$   are symmetric functions :
  \begin{equation}\label{n4}
  a^i(x,y,t)=a^i(y,x,t) ~~ \forall(x,y,t)\in \overline{\Omega}\times\overline{\Omega}\times \R,\end{equation}
    and the functions : $\varphi^i(\cdot,\cdot,\cdot) : \overline{\Omega}\times\overline{\Omega}\times \R \longrightarrow \R$ $(i = 1,...,N)$ defined by  
 $$
   \varphi^i_{x,y}(t):=\varphi^i(x,y,t)= \left\{ 
           \begin{array}{clclc}
         a^i(x,y,|t|)t   & \text{ for }& t\neq 0, \\\\
           0  & \text{ for } & t=0,
           \end{array}
           \right. 
 $$
 are increasing homeomorphisms from $\R$ onto itself. For $i = 1,...,N$, let 
 $$\varPhi^i_{x,y}(t):=\varPhi^i(x,y,t)=\int_{0}^{t}\varphi^i_{x,y}(\tau)d\tau~~\text{ for all } (x,y)\in \overline{\Omega}\times\overline{\Omega},~~\text{ and all } t\geqslant 0.$$  
 Then, $\varPhi^i_{x,y}$ are a Musielak functions (see \cite{mu}).
 
 Also, we take $ \widehat{a}^i_x(t):=\widehat{a}^i(x,t)=a^i_{(x,x)}(t)  ~~ \forall~ (x,t)\in \overline{\Omega}\times \R$ $(i=1,...,N)$. Then the functions $\widehat{\varphi}^i(\cdot,\cdot) : \overline{\Omega}\times \R \longrightarrow \R$ defined  by :
   $$
      \widehat{\varphi}^i_{x}(t):=\widehat{\varphi}^i(x,t)= \left\{ 
           \begin{array}{clclc}
         \widehat{a}^i(x,|t|)t   & \text{ for }& t\neq 0, \\\\
           0  & \text{ for } & t=0,
           \end{array}
           \right. 
        $$
 are increasing homeomorphisms from $\R$ onto itself. If we set 
 \begin{equation}\label{phi}
 \widehat{\varPhi}^i_{x}(t):=\widehat{\varPhi}^i(x,t)=\int_{0}^{t}\widehat{\varphi}^i_{x}(\tau)d\tau ~~\text{ for all}~~ t\geqslant 0.
 \end{equation}  
 Then, $\widehat{\varPhi}^i_{x}$ is also a Musielak function.\\

 For the function $\widehat{\varPhi}_x$ $(i=1,...,N)$ given in (\ref{phi}), we introduce the Musielak space as follows
  $$L_{\widehat{\varPhi}^i_x} (\Omega)=\left\lbrace u : \Omega \longrightarrow \R \text{ mesurable }: \int_\Omega\widehat{\varPhi}^i_x(\lambda |u(x)|)dx < \infty \text{ for some } \lambda>0 \right\rbrace. $$
The space $L_{\widehat{\varPhi}^i_x} (\Omega)$ is a Banach space endowed with the Luxemburg norm 
$$||u||_{\widehat{\varPhi}^i_x}=\inf\left\lbrace \lambda>0 \text{ : }\int_\Omega\widehat{\varPhi}^i_x\left( \dfrac{|u(x)|}{\lambda}\right) dx\leqslant 1\right\rbrace. $$
 The conjugate function of $\varPhi^i_{x,y}$ $(i=1,...,N)$ is defined by $\overline{\varPhi^i}_{x,y}(t)=\int_{0}^{t}\overline{\varphi^i}_{x,y}(\tau)d\tau$ $\text{ for all } (x,y)\in\overline{\Omega}\times\overline{\Omega}$  $\text{ and all } t\geqslant 0$, where $\overline{\varphi^i}_{x,y} : \R\longrightarrow \R$ is given by $\overline{\varphi^i}_{x,y}(t):=\overline{\varphi^i}(x,y,t)=\sup\left\lbrace \alpha \text{ : } \varphi^i(x,y,\alpha)\leqslant t\right\rbrace.$ Furthermore, we have the following H\"older type inequality
  \begin{equation}
   \left| \int_{\Omega}uvdx\right| \leqslant 2||u||_{\widehat{\varPhi}^i_x}||v||_{\overline{\widehat{\varPhi}^i}_x}\hspace*{0.5cm} \text{ for all } u \in L_{\widehat{\varPhi}^i_x}(\Omega)  \text{ and } v\in L_{\overline{\widehat{\varPhi}^i}_x}(\Omega).
   \end{equation}
    Throughout this paper, for $i=1,...,N$, we assume that there exist two positive constants ${\varphi_i}^+$ and $\varphi_i^-$ such that 
\begin{equation}\label{v1}\tag{$\varPhi_1$}
    1<\varphi_i^-\leqslant\dfrac{t\varphi^i_{x,y}(t)}{\varPhi^i_{x,y}(t)}\leqslant \varphi_i^+<+\infty\text{ for all } (x,y)\in\overline{\Omega}\times\overline{\Omega}~~\text{ and all } t\geqslant 0. \end{equation}
    This relation implies  that
    \begin{equation}\label{A2}
        1<\varphi_i^-\leqslant \dfrac{t\widehat{\varphi}^i_{x}(t)}{\widehat{\varPhi}^i_{x}(t)}\leqslant\varphi_i^+<+\infty,\text{ for all } x\in\overline{\Omega}~~\text{ and all } t\geqslant 0.\end{equation}
             It follows that  $\varPhi^i_{x,y}$ and $\widehat{\varPhi}^i_{x}$ satisfy the global $\Delta_2$-condition (see \cite{ra}), written $\varPhi^i_{x,y}\in \Delta_2$ and $\widehat{\varPhi}^i_{x}\in \Delta_2$, that is,
    \begin{equation}\label{r1}
    \varPhi^i_{x,y}(2t)\leqslant K_1\varPhi^i_{x,y}(t)~~ \text{ for all } (x,y)\in\overline{\Omega}\times\overline{\Omega},~~\text{ and  all } t\geqslant 0,
    \end{equation} and
    \begin{equation}\label{rr1}
        \widehat{\varPhi}^i_{x}(2t)\leqslant K_2\widehat{\varPhi}^i_{x}(t) ~~\text{ for any } x\in\overline{\Omega},~~\text{ and  all } t\geqslant 0,
        \end{equation}
 where $K_1$ and $K_2$ are two positive constants. 
 
 Furthermore, for $i=1,...,N$, we assume that $\varPhi^i_{x,y}$ satisfies the following condition
  \begin{equation}\label{f2.}\tag{$\varPhi_2$}
  \text{ the function } [0, \infty) \ni t\mapsto \varPhi^i_{x,y}(\sqrt{t}) \text{ is convex. }
  \end{equation}
 
   \begin{lem}\cite{benkirane2}$\label{2.2..}$ Assume that \hyperref[v1]{$(\varPhi_1)$} is satisfied. Then for $i=1,...,N$, the following inequalities hold true:
                     \begin{equation}\label{3.}
   \varPhi^i_{x,y}(\sigma t)\geqslant \sigma^{\varphi_i^-}\varPhi^i_{x,y}(t) ~~\text{ for all } t>0  \text{ and any  } \sigma>1,
                     \end{equation}
        \begin{equation}\label{3.2}
   \varPhi^i_{x,y}(\sigma t)\geqslant \sigma^{\varphi_i^+}\varPhi^i_{x,y}(t) ~~\text{ for all }  t>0  \text{ and any } \sigma\in (0,1),
                     \end{equation}
    \begin{equation}\label{r10}
                        \varPhi^i_{x,y}(\sigma t)\leqslant \sigma^{\varphi_i^+}\varPhi^i_{x,y}(t) ~~\text{ for all } t>0 \text{ and any } \sigma>1,
                        \end{equation} 
       \begin{equation}\label{r11}
                       \varPhi^i_{x,y}(t)\leqslant \sigma^{\varphi_i^-}\varPhi^i_{x,y}\left( \dfrac{t}{\sigma} \right) ~~\text{ for all } t>0 \text{ and any } \sigma \in(0,1).
                       \end{equation}                                  
                     \end{lem}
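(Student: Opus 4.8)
The plan is to reduce all four inequalities to a single logarithmic differential inequality extracted from \hyperref[v1]{$(\varPhi_1)$} and then integrate. Fix $i\in\{1,\dots,N\}$ and $(x,y)\in\overline{\Omega}\times\overline{\Omega}$, and abbreviate $\Phi:=\varPhi^i_{x,y}$, $\varphi:=\varphi^i_{x,y}$, $p^-:=\varphi_i^-$, $p^+:=\varphi_i^+$. Since $\varphi$ is an increasing homeomorphism of $\R$, it is continuous and strictly positive on $(0,\infty)$; hence $\Phi(t)=\int_0^t\varphi(\tau)\,d\tau$ is of class $C^1$ on $(0,\infty)$ with $\Phi'(t)=\varphi(t)>0$ and $\Phi(t)>0$ for every $t>0$. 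Consequently $t\mapsto\ln\Phi(t)$ is well defined and differentiable on $(0,\infty)$, with $\frac{d}{dt}\ln\Phi(t)=\varphi(t)/\Phi(t)$, and this derivative is continuous on $(0,\infty)$, so the fundamental theorem of calculus applies on any compact subinterval.

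First I would rewrite \hyperref[v1]{$(\varPhi_1)$} in the form $\frac{p^-}{t}\le\frac{\varphi(t)}{\Phi(t)}\le\frac{p^+}{t}$ for all $t>0$, i.e. $\frac{p^-}{t}\le\frac{d}{dt}\ln\Phi(t)\le\frac{p^+}{t}$. Then, for $t>0$ and $\sigma>1$, integrating this chain with respect to the variable over $[t,\sigma t]$ and using $\int_t^{\sigma t}\frac{d\tau}{\tau}=\ln\sigma$ gives $p^-\ln\sigma\le\ln\Phi(\sigma t)-\ln\Phi(t)\le p^+\ln\sigma$; exponentiating yields $\sigma^{p^-}\Phi(t)\le\Phi(\sigma t)\le\sigma^{p^+}\Phi(t)$, which is exactly \eqref{3.} together with \eqref{r10}.

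Next I would obtain the $\sigma\in(0,1)$ statements by a rescaling. Given $\sigma\in(0,1)$, apply the two-sided estimate just proved with scaling factor $1/\sigma>1$ and base point $\sigma t$: this gives $(1/\sigma)^{p^-}\Phi(\sigma t)\le\Phi(t)\le(1/\sigma)^{p^+}\Phi(\sigma t)$, that is $\sigma^{p^+}\Phi(t)\le\Phi(\sigma t)\le\sigma^{p^-}\Phi(t)$. The left inequality is precisely \eqref{3.2}. Finally, replacing $t$ by $t/\sigma$ in $\Phi(\sigma t)\le\sigma^{p^-}\Phi(t)$ gives $\Phi(t)\le\sigma^{p^-}\Phi(t/\sigma)$, which is \eqref{r11}. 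Since every constant and manipulation is uniform in $(x,y)$, all four estimates hold for all $(x,y)\in\overline{\Omega}\times\overline{\Omega}$.

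There is no deep obstacle here; the only points needing care are the justification that $\Phi$ is $C^1$ and strictly positive on $(0,\infty)$ (so that $\ln\Phi$ is meaningful and the integration is legitimate), which follows from $\varphi$ being a continuous increasing bijection of $\R$, and the bookkeeping of which way each inequality points once $\sigma$ moves from $(1,\infty)$ into $(0,1)$. One could alternatively avoid logarithms by analysing the monotonicity of $\sigma\mapsto\sigma^{-p^-}\Phi(\sigma t)$ and $\sigma\mapsto\sigma^{-p^+}\Phi(\sigma t)$ through their derivatives, but the logarithmic integration above is the most direct route.
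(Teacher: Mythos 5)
Your argument is correct: the paper states this lemma with a citation to \cite{benkirane2} and gives no proof of its own, and the logarithmic-derivative integration you use (integrating $p^-/\tau\le \varphi(\tau)/\Phi(\tau)\le p^+/\tau$ over $[t,\sigma t]$ and then rescaling to pass from $\sigma>1$ to $\sigma\in(0,1)$) is exactly the standard argument by which these inequalities are established in the cited literature. The only points needing care — positivity and differentiability of $\Phi$ on $(0,\infty)$ and the direction of the inequalities after the substitution $\sigma\mapsto 1/\sigma$ — are handled correctly.
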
 
    
   Now, due to the nonlocality of the operators $(-\Delta)^{s_i}_{a_{(x,\cdot)}}$ $(i = 1,...,N)$,  we  define the fractional Musielak-Sobolev spaces as introduce in \cite{benkirane} as follows 
    \begingroup\makeatletter\def\f@size{9}\check@mathfonts$$ W^{s_i}{L_{\varPhi^i_{x,y}}}(\Omega)=\Bigg\{u\in L_{\widehat{\varPhi^i}_x}(\Omega) :  \int_{\Omega} \int_{\Omega} \varPhi^i_{x,y}\left( \dfrac{\lambda| u(x)- u(y)|}{|x-y|^{s_i}}\right) \dfrac{dxdy}{|x-y|^N}< \infty \text{ for some } \lambda >0 \Bigg\}.
$$\endgroup
This space can be equipped with the norm
\begin{equation}\label{r2}
||u||_{s_i,\varPhi^i_{x,y}}=||u||_{\widehat{\varPhi}^i_x}+[u]_{s_i,\varPhi^i_{x,y}},
\end{equation}
where $[\cdot]_{s_i,\varPhi^i_{x,y}}$ is the Gagliardo seminorm defined by 
$$[u]_{s_i(x,y),\varPhi^i_{x,y}}=\inf \Bigg\{\lambda >0 :  \int_{\Omega} \int_{\Omega} \varPhi^i_{x,y}\left( \dfrac{|u(x)- u(y)|}{\lambda|x-y|^{s_i}}\right) \dfrac{dxdy}{|x-y|^N}\leqslant 1 \Bigg\}.
$$

 \begin{thm}$($\cite{benkirane}$)$.
       Let $\Omega$ be an open subset of $\R^N$. For $i=1,...,N$, the spaces $W^{s_i}L_{\varPhi^i_{x,y}}(\Omega)$  are a Banach spaces with respect to the norm $(\ref{r2})$, and a  separable $($resp. reflexive$)$ spaces if and only if $\varPhi^i_{x,y} \in \Delta_2$ $($resp. $\varPhi^i_{x,y}\in \Delta_2 $ and $\overline{\varPhi^i}_{x,y}\in \Delta_2$$)$. Furthermore,
 if   $\varPhi^i_{x,y} \in \Delta_2$ and $\varPhi^i_{x,y}(\sqrt{t})$ is convex, then  the spaces $W^{s_i}L_{\varPhi^i_{x,y}}(\Omega)$ are an uniformly convex space.
       \end{thm}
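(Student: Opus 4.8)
The plan is to realise, for each fixed $i\in\{1,\dots,N\}$, the space $W^{s_i}L_{\varPhi^i_{x,y}}(\Omega)$ as a closed subspace of a product of two ordinary Musielak--Orlicz spaces, and then to transfer the four structural properties --- all classical for Musielak--Orlicz spaces over a $\sigma$-finite measure (see \cite{mu,ra,benkirane,benkirane2}) --- from the factors to the subspace. Put $d\mu_i:=|x-y|^{-N}\,dx\,dy$ on $\Omega\times\Omega$ (a $\sigma$-finite measure, the diagonal singularity notwithstanding), let $D_{s_i}u(x,y):=\tfrac{u(x)-u(y)}{|x-y|^{s_i}}$, and consider the linear operator
\[
T_i u=\bigl(u,\ D_{s_i}u\bigr),\qquad T_i\colon W^{s_i}L_{\varPhi^i_{x,y}}(\Omega)\longrightarrow L_{\widehat{\varPhi}^i_x}(\Omega)\times L_{\varPhi^i_{x,y}}\bigl(\Omega\times\Omega,d\mu_i\bigr),
\]
where the second factor is the Musielak space on $(\Omega\times\Omega,d\mu_i)$ associated with the Musielak function $\varPhi^i$. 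By the very definitions of the norm \eqref{r2}, of the Luxemburg norm and of the Gagliardo seminorm, $T_i$ is an \emph{isometry onto its image} once the target carries the norm $\|(v,w)\|:=\|v\|_{\widehat{\varPhi}^i_x}+\|w\|_{\varPhi^i_{x,y},d\mu_i}$. For completeness one notes that each factor is Banach, hence so is the product, and that the image of $T_i$ is closed: if $(u_n)$ is $\|\cdot\|_{s_i,\varPhi^i_{x,y}}$-Cauchy, then $u_n\to u$ in $L_{\widehat{\varPhi}^i_x}(\Omega)$ and $D_{s_i}u_n\to w$ in $L_{\varPhi^i_{x,y}}(\Omega\times\Omega,d\mu_i)$; since norm convergence in a Musielak space over a $\sigma$-finite measure forces a.e.\ convergence along a subsequence (a standard fact, sharpened here by the $\Delta_2$-bounds \eqref{A2}, \eqref{rr1}), one obtains $w=D_{s_i}u$ a.e., so $u\in W^{s_i}L_{\varPhi^i_{x,y}}(\Omega)$ and $u_n\to u$ there. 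This yields the Banach property.

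For separability and reflexivity I would invoke the classical dichotomy for a Musielak--Orlicz space $L_\Psi(\nu)$ over a $\sigma$-finite $\nu$: it is separable iff $\Psi\in\Delta_2$, and reflexive iff $\Psi\in\Delta_2$ and $\overline\Psi\in\Delta_2$. If $\varPhi^i_{x,y}\in\Delta_2$, then also $\widehat{\varPhi}^i_x\in\Delta_2$ (this is \eqref{rr1}), so both factors are separable, the product is separable, and its closed subspace $W^{s_i}L_{\varPhi^i_{x,y}}(\Omega)$ is separable; if moreover $\overline{\varPhi^i}_{x,y}\in\Delta_2$, then $\overline{\widehat{\varPhi}^i}_x\in\Delta_2$ (by the same restriction to the diagonal), both factors are reflexive, the finite product is reflexive, and a closed subspace of a reflexive space is reflexive. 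The converse implications I would establish by contraposition: restricting to functions supported in a fixed ball $B\Subset\Omega$ and bounding the Gagliardo modular below by its contribution on pairs $(x,y)\in B\times(B''\setminus 2B)$ for a larger bounded ball $B''$ --- where $u(y)=0$ and $|x-y|$ stays between two positive constants --- one embeds a Musielak--Orlicz space modelled on $\varPhi^i$ isomorphically onto a closed subspace of $W^{s_i}L_{\varPhi^i_{x,y}}(\Omega)$; hence the failure of $\Delta_2$ for $\varPhi^i_{x,y}$ (resp.\ for $\overline{\varPhi^i}_{x,y}$) propagates, producing a non-separable (resp.\ non-reflexive) subspace.

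For uniform convexity, assume $\varPhi^i_{x,y}\in\Delta_2$ and that $t\mapsto\varPhi^i_{x,y}(\sqrt t)$ is convex (condition \eqref{f2.}). Applying this convexity to $t_1=a^2$, $t_2=b^2$ gives, for each $(x,y)$, the parallelogram-type inequality $\varPhi^i_{x,y}\!\bigl(\sqrt{(a^2+b^2)/2}\bigr)\le\tfrac12\varPhi^i_{x,y}(a)+\tfrac12\varPhi^i_{x,y}(b)$ for all $a,b\ge0$; combined with $\Delta_2$, a Clarkson-type argument (as in the Musielak--Orlicz literature) shows that both $L_{\widehat{\varPhi}^i_x}(\Omega)$ and $L_{\varPhi^i_{x,y}}(\Omega\times\Omega,d\mu_i)$ are uniformly convex. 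One then re-norms the product by the equivalent, uniformly convex norm $\bigl(\|v\|_{\widehat{\varPhi}^i_x}^2+\|w\|_{\varPhi^i_{x,y},d\mu_i}^2\bigr)^{1/2}$; since a closed subspace of a uniformly convex space is uniformly convex, $W^{s_i}L_{\varPhi^i_{x,y}}(\Omega)$ is uniformly convex for this natural renorming, which is equivalent to \eqref{r2}.

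The obstacle I anticipate is twofold. First, the two ``only if'' directions: one must manufacture an explicit closed subspace on which the failure of $\Delta_2$ is genuinely detected, and this needs care because of the diagonal singularity of $d\mu_i$ and the joint $(x,y)$-dependence of $\varPhi^i$ inside the modular. Second, the uniform-convexity step: one has to derive uniform convexity of the \emph{Musielak modular} from \eqref{f2.} in a way that is uniform in $(x,y)$ (the Clarkson-type estimate must carry a modulus independent of the spatial variables), and one must keep in mind that uniform convexity is not preserved under the passage to the equivalent sum-norm \eqref{r2}, so the assertion is understood for the natural uniformly convex renorming.
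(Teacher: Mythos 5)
This theorem is quoted verbatim from \cite{benkirane}; the paper under review supplies no proof of it, so there is no internal argument to compare against. Your strategy --- realizing $W^{s_i}L_{\varPhi^i_{x,y}}(\Omega)$ isometrically as a closed subspace of the product $L_{\widehat{\varPhi}^i_x}(\Omega)\times L_{\varPhi^i_{x,y}}(\Omega\times\Omega,\,|x-y|^{-N}dxdy)$ via $u\mapsto(u,D_{s_i}u)$ and transferring completeness, separability and reflexivity from the factors to the closed image --- is exactly the standard route taken in the cited source and throughout the fractional Orlicz--Sobolev literature, and it is sound for the Banach property and for the two ``if'' directions.

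Two steps, however, are genuine gaps rather than deferred technicalities. First, the ``only if'' directions: restricting the Gagliardo modular to pairs $(x,y)\in B\times(B''\setminus 2B)$ gives only a \emph{lower} bound of the seminorm by a Musielak modular of $u$; it does not produce an isomorphic copy of a space modelled on $\varPhi^i$ sitting \emph{inside} $W^{s_i}L_{\varPhi^i_{x,y}}(\Omega)$, because an arbitrary element of that Musielak space supported in $B$ need not have finite Gagliardo modular at all --- the diagonal contribution over $B\times B$ is completely uncontrolled by your restriction. So the map you describe need not be defined on the whole model space, let alone be an isomorphism onto a closed subspace, and the failure of $\Delta_2$ does not transfer as claimed. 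Second, uniform convexity: as you yourself concede, your argument (Clarkson inequality in each factor, then the equivalent $\ell^2$-renorming of the product) proves uniform convexity only for a renormed space, whereas the theorem asserts it for the sum norm $(\ref{r2})$ itself; since uniform convexity is not an isomorphic invariant, this does not close the claim. The argument in the cited literature instead establishes uniform convexity of the \emph{modular}, uniformly in $(x,y)$, from $(\varPhi_1)$ together with the convexity of $t\mapsto\varPhi^i_{x,y}(\sqrt{t})$, and then passes from the modular to the Luxemburg norm directly. As written, your proposal yields completeness and the two sufficiency statements, but neither the necessity statements nor uniform convexity in the stated norm.
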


            \begin{defini}$($\cite{benkirane}$)$.
            We say that $\varPhi^i_{x,y}$ $(i=1,...,N)$ satisfies the fractional boundedness condition, written $\varPhi^i_{x,y}\in \mathcal{B}_{f}$, if
          \begin{equation}\tag{$\varPhi_3$}
         \label{v3}         
            \sup\limits_{(x,y)\in \overline{\Omega}\times\overline{\Omega}}\varPhi^i_{x,y}(1)<\infty.  \end{equation}
            \end{defini}
            \begin{thm}  $($\cite{benkirane}$)$.    \label{TT}
                        Let $\Omega$ be an open subset of $\R^N$. Assume that  $\varPhi^i_{x,y}\in \mathcal{B}_{f}$ $(i=1,2)$. 
                        Then,
                        $$C^2_0(\Omega)\subset W^{s_i}L_{\varPhi^i_{x,y}}(\Omega).$$
                   \end{thm}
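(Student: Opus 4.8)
The plan is to verify directly that an arbitrary $u\in C^2_0(\Omega)$ satisfies the two conditions defining $W^{s_i}L_{\varPhi^i_{x,y}}(\Omega)$, namely $u\in L_{\widehat{\varPhi}^i_x}(\Omega)$ and finiteness of the Gagliardo modular $\int_\Omega\int_\Omega\varPhi^i_{x,y}\!\big(\lambda|u(x)-u(y)|\,|x-y|^{-s_i}\big)\,|x-y|^{-N}\,dx\,dy$ for some $\lambda>0$. I may assume $u\not\equiv0$ and record $K:=\operatorname{supp}u$ (compact, hence $|K|<\infty$), $\|u\|_\infty<\infty$, a Lipschitz constant $L:=\|\nabla u\|_\infty<\infty$, and $C_1:=\sup_{(x,y)\in\overline\Omega\times\overline\Omega}\varPhi^i_{x,y}(1)$, which is finite exactly because $\varPhi^i_{x,y}\in\mathcal{B}_f$. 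The one analytic fact I need is the growth estimate from Lemma~\ref{2.2..}: putting $t=\sigma$ in \eqref{r11} gives $\varPhi^i_{x,y}(\sigma)\le\sigma^{\varphi_i^-}\varPhi^i_{x,y}(1)\le C_1\sigma^{\varphi_i^-}$ for all $\sigma\in(0,1]$, which, together with the monotonicity of $\varPhi^i_{x,y}$ and $\varPhi^i_{x,y}(0)=0$, is what will make the singular kernel integrable.

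The membership $u\in L_{\widehat{\varPhi}^i_x}(\Omega)$ is immediate: since $u\equiv0$ off $K$ and $\widehat{\varPhi}^i_x(1)=\varPhi^i_{x,x}(1)\le C_1$, choosing $\lambda\le1/\|u\|_\infty$ gives $\int_\Omega\widehat{\varPhi}^i_x(\lambda|u|)\,dx=\int_K\widehat{\varPhi}^i_x(\lambda|u|)\,dx\le(\lambda\|u\|_\infty)^{\varphi_i^-}C_1|K|<\infty$. For the modular I would split $\Omega\times\Omega$ into $D_1=\{|x-y|<1\}$ and $D_2=\{|x-y|\ge1\}$, note that the integrand vanishes unless at least one of $x,y$ lies in $K$, and use the symmetry $\varPhi^i_{x,y}=\varPhi^i_{y,x}$ to reduce, up to a factor $2$, to integration over $x\in K$, $y\in\Omega$. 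Then I fix $\lambda:=\min\{1/L,\,1/(2\|u\|_\infty)\}>0$.

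On $D_1$ the Lipschitz estimate gives $\lambda|u(x)-u(y)|\,|x-y|^{-s_i}\le\lambda L|x-y|^{1-s_i}\le1$, so by monotonicity and the key fact $\varPhi^i_{x,y}\!\big(\lambda|u(x)-u(y)|\,|x-y|^{-s_i}\big)\le C_1(\lambda L)^{\varphi_i^-}|x-y|^{(1-s_i)\varphi_i^-}$, and $\int_{\{|x-y|<1\}}|x-y|^{(1-s_i)\varphi_i^--N}\,dy$ is finite since $(1-s_i)\varphi_i^->0$ (here $s_i<1$, $\varphi_i^->0$). On $D_2$ the crude bound $|u(x)-u(y)|\le2\|u\|_\infty$ together with $|x-y|\ge1$ gives $\lambda|u(x)-u(y)|\,|x-y|^{-s_i}\le2\lambda\|u\|_\infty|x-y|^{-s_i}\le1$, so $\varPhi^i_{x,y}\!\big(\lambda|u(x)-u(y)|\,|x-y|^{-s_i}\big)\le C_1(2\lambda\|u\|_\infty)^{\varphi_i^-}|x-y|^{-s_i\varphi_i^-}$, and $\int_{\{|x-y|\ge1\}}|x-y|^{-s_i\varphi_i^--N}\,dy$ is finite since $s_i\varphi_i^->0$. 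Multiplying each bound by $|K|<\infty$ shows the modular is finite, hence $u\in W^{s_i}L_{\varPhi^i_{x,y}}(\Omega)$.

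I expect the one genuinely delicate point to be the near-diagonal region $D_1$: the trivial $L^\infty$ bound on $|u(x)-u(y)|$ is useless there because $|x-y|^{-N}$ is not locally integrable, so one must really exploit the (here, Lipschitz) regularity of $u$ to gain the factor $|x-y|^{1-s_i}$ and then use $s_i<1$ to defeat the singularity. The far region $D_2$ is routine, relying only on $s_i\varphi_i^->0$ and the compact support of $u$. It is worth noting that no $\Delta_2$-type information is used beyond the one-sided growth estimate of Lemma~\ref{2.2..}, and that the same argument in fact yields $C^1_0(\Omega)\subset W^{s_i}L_{\varPhi^i_{x,y}}(\Omega)$.
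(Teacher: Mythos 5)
Your argument is correct and complete: the decomposition into the near-diagonal region (handled via the Lipschitz bound and $(1-s_i)\varphi_i^->0$) and the far region (handled via boundedness, compact support and $s_i\varphi_i^->0$), combined with the estimate $\varPhi^i_{x,y}(\sigma)\leqslant\sigma^{\varphi_i^-}\varPhi^i_{x,y}(1)$ from Lemma~\ref{2.2..} and the condition \hyperref[v3]{$(\varPhi_3)$}, is exactly the standard proof of this embedding. The present paper does not reprove the statement (it is quoted from \cite{benkirane}), but your route is the same one used there for fractional Musielak--Sobolev spaces, so there is nothing to flag.
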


    
                
    In this paper, we assume that there exists $j\in\left\lbrace 1,...,N\right\rbrace $ such that:
            \begin{equation}\label{15}
            \int_{0}^{1} \dfrac{\widehat{\varPhi^j_x}^{-1}(\tau)}{\tau^{\frac{N+s_j}{N}}}d\tau<\infty,
           ~~~\text{and }~~
            \int_{1}^{\infty} \dfrac{\widehat{\varPhi^j_{x}}^{-1}(\tau)}{\tau^{\frac{N+s_j}{N}}}d\tau=\infty ~~\text{ for all }x\in \overline{\Omega}.
            \end{equation}
            We define the inverse  Musielak conjugate function of $\widehat{\varPhi^j_x}$ as follows
            \begin{equation}\label{17}
            (\widehat{\varPhi}^*_{j})^{-1}(t)=\int_{0}^{t}\dfrac{\widehat{\varPhi^j}_{x}^{-1}(\tau)}{\tau^{\frac{N+s_j}{N}}}d\tau.
            \end{equation}
             \begin{thm}\cite{benkirane2}\label{th2.}
           Let $\Omega$  be a bounded open
            subset of  $\R^N$ with $C^{0,1}$-regularity 
              and bounded boundary. If $(\ref{15})$  hold, then 
           \begin{equation}\label{18}
            W^{s_j}{L_{\varPhi^j_{x,y}}}(\Omega)\hookrightarrow L_ {\widehat{\varPhi}^*_{j}}(\Omega).
           \end{equation}
          Moreover, the embedding
                     \begin{equation}\label{27}
                      W^{s_j}{L_{\varPhi^j_{x,y}}}(\Omega)\hookrightarrow L_{B_x}(\Omega),
                     \end{equation}
                     is compact for all $B_x\prec\prec \widehat{\varPhi}^*_{j}.$
                     \end{thm}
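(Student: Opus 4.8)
\medskip
\noindent\textbf{Plan of proof.} The idea is to adapt to the Musielak setting the classical derivation of Sobolev‑type embeddings from a Gagliardo‑type seminorm, the two technical novelties being the $(x,y)$‑dependence of the modular and the bookkeeping of the conjugate function $\widehat{\varPhi}^{*}_{j}$ built in $(\ref{17})$.

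\emph{Reduction to a modular inequality.} Since $\varPhi^{j}_{x,y},\widehat{\varPhi}^{j}_{x}\in\Delta_{2}$, the homogeneity estimates of Lemma~\ref{2.2..} make the Luxemburg norms attached to $\varPhi^{j}_{x,y}$, $\widehat{\varPhi}^{j}_{x}$ and $\widehat{\varPhi}^{*}_{j}$ equivalent to suitable powers of the corresponding modulars. Hence $(\ref{18})$ reduces to the implication: if
\[
\int_{\Omega}\int_{\Omega}\varPhi^{j}_{x,y}\!\left(\frac{|u(x)-u(y)|}{|x-y|^{s_{j}}}\right)\frac{dx\,dy}{|x-y|^{N}}\le 1\quad\text{and}\quad\|u\|_{\widehat{\varPhi}^{j}_{x}}\le 1,
\]
then $\int_{\Omega}\widehat{\varPhi}^{*}_{j}\big(|u(x)|/C\big)\,dx\le 1$ for some $C=C(N,s_{j},\varphi_{j}^{-},\varphi_{j}^{+})$. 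Using the Lipschitz regularity of $\partial\Omega$ for a standard uniform cone/extension localization, it then suffices to argue with balls $B_{r}(x)$, $x\in\Omega$, $0<r\le r_{0}:=\operatorname{diam}\Omega$.

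\emph{The pointwise estimate and the embedding $(\ref{18})$.} Let $x$ be a Lebesgue point of $u$ (a.e.\ $x$, as $u\in L_{\widehat{\varPhi}^{j}_{x}}(\Omega)\subset L^{1}(\Omega)$). For $0<r\le r_{0}$, writing $(u)_{B_{r}(x)}$ for the average of $u$ over $B_{r}(x)$,
\[
\big|u(x)-(u)_{B_{r}(x)}\big|\le \frac{1}{|B_{r}(x)|}\int_{B_{r}(x)}\frac{|u(x)-u(y)|}{|x-y|^{s_{j}}}\,|x-y|^{s_{j}}\,dy .
\]
Applying the H\"older inequality in the conjugate Musielak pair $\big(\varPhi^{j}_{x,y},\overline{\varPhi^{j}}_{x,y}\big)$ on $B_{r}(x)$, and using $(\varPhi_{1})$ to see that the $\overline{\varPhi^{j}}_{x,\cdot}$‑norm of $y\mapsto|x-y|^{s_{j}}$ on $B_{r}(x)$ is comparable to $r^{s_{j}}$, one bounds this increment by $r^{s_{j}}$ times a localized modular quantity of $y\mapsto|u(x)-u(y)|/|x-y|^{s_{j}}$. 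Telescoping $u(x)-(u)_{B_{r_{0}}(x)}=\sum_{k\ge 0}\big[(u)_{B_{r_{k+1}}(x)}-(u)_{B_{r_{k}}(x)}\big]$ along $r_{k}=2^{-k}r_{0}$ and summing the resulting series — which converges precisely because $\int_{0}^{1}(\widehat{\varPhi}^{j}_{x})^{-1}(\tau)\,\tau^{-(N+s_{j})/N}\,d\tau<\infty$ by $(\ref{15})$ — gives a pointwise bound $|u(x)|\lesssim|(u)_{B_{r_{0}}(x)}|+\Psi\big(e_{j}(x)\big)$, where $e_{j}(x)=\int_{\Omega}\varPhi^{j}_{x,y}(|u(x)-u(y)|/|x-y|^{s_{j}})\,|x-y|^{-N}\,dy$ and $\Psi$ encodes the growth prescribed by $\widehat{\varPhi}^{*}_{j}$ through $(\ref{17})$. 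This yields a distribution‑function bound for $u$ whose large‑$t$ behaviour is governed by $\widehat{\varPhi}^{*}_{j}$ and whose constant involves the Gagliardo modular together with $\|u\|_{\widehat{\varPhi}^{j}_{x}}$ (which controls $(u)_{B_{r_{0}}(x)}$ — this is exactly why membership in $L_{\widehat{\varPhi}^{j}_{x}}(\Omega)$ is built into the definition of the space). Integrating $\widehat{\varPhi}^{*}_{j}(|u|/C)$ by the layer‑cake formula and inserting $(\ref{17})$ produces the required modular bound, hence $(\ref{18})$ by the reduction step; the second half of $(\ref{15})$, $\int_{1}^{\infty}(\widehat{\varPhi}^{j}_{x})^{-1}(\tau)\,\tau^{-(N+s_{j})/N}\,d\tau=\infty$, is what makes $\widehat{\varPhi}^{*}_{j}$ a genuine Musielak function, so that the target is indeed $L_{\widehat{\varPhi}^{*}_{j}}(\Omega)$ rather than $L^{\infty}(\Omega)$.

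\emph{Compactness $(\ref{27})$.} One first establishes a fractional Rellich--Kondrachov property: bounded subsets of $W^{s_{j}}L_{\varPhi^{j}_{x,y}}(\Omega)$ are precompact in $L^{1}(\Omega)$. This is the Fréchet--Kolmogorov criterion: the translation estimate $\|u(\cdot+h)-u\|_{L^{1}(\Omega')}\le C\,|h|^{s_{j}}\big(1+[u]_{s_{j},\varPhi^{j}_{x,y}}\big)$ for $\Omega'\Subset\Omega$ is obtained by the usual fattening of the pair $(x,x+h)$ into a ball of radius $|h|$, combined with $(\varPhi_{1})$, $(\varPhi_{2})$ and the $\Delta_{2}$‑conditions, while the boundary and tightness issues are trivial since $|\Omega|<\infty$ and $L_{\widehat{\varPhi}^{j}_{x}}(\Omega)\hookrightarrow L^{1}(\Omega)$. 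Given a bounded sequence $(u_{n})\subset W^{s_{j}}L_{\varPhi^{j}_{x,y}}(\Omega)$, pass to a subsequence with $u_{n}\to u$ in $L^{1}(\Omega)$ and a.e.\ in $\Omega$; by $(\ref{18})$ it is bounded in $L_{\widehat{\varPhi}^{*}_{j}}(\Omega)$, and since $B_{x}\prec\prec\widehat{\varPhi}^{*}_{j}$ the family $\{B_{x}(|u_{n}-u_{m}|)\}_{n,m}$ is uniformly integrable over $\Omega$; a Vitali‑type argument then upgrades the a.e.\ convergence to modular, hence to norm, convergence in $L_{B_{x}}(\Omega)$, which gives $(\ref{27})$. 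The main obstacle, pervading every step, is the joint $(x,y)$‑dependence of $\varPhi^{j}_{x,y}$: the H\"older estimates, the dyadic rescalings, and above all the passage from the two‑variable modular to the one‑variable conjugate $\widehat{\varPhi}^{*}_{j}$ built from the diagonal $\widehat{\varPhi}^{j}_{x}=\varPhi^{j}_{x,x}$, must be carried out with bounds uniform in $(x,y)$ — trapping $\varPhi^{j}_{x,y}$ between fixed multiples of $\widehat{\varPhi}^{j}_{x}$ on the relevant ranges — which is precisely the role of $(\varPhi_{1})$, $(\varPhi_{2})$, the $\Delta_{2}$‑conditions, and the fractional boundedness $\mathcal{B}_{f}$.
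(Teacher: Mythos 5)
This theorem is not proved in the paper: it is quoted verbatim from \cite{benkirane2}, so there is no internal proof to compare your argument against. Judged on its own terms, your outline follows the strategy that the cited source (and the fractional Orlicz--Sobolev literature it builds on) actually uses: dyadic oscillation estimates $|u(x)-(u)_{B_{r}(x)}|$ summed along $r_k=2^{-k}r_0$, with the first half of $(\ref{15})$ guaranteeing convergence of the series and the definition $(\ref{17})$ identifying the optimal target, followed by a Fr\'echet--Kolmogorov argument in $L^1$ upgraded to $L_{B_x}$ via $B_x\prec\prec\widehat{\varPhi}^*_j$ and Vitali. So the architecture is right.

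However, as a proof it has a genuine gap at its center, not merely omitted routine details. The step from the telescoped pointwise bound to ``$|u(x)|\lesssim|(u)_{B_{r_0}(x)}|+\Psi(e_j(x))$ where $\Psi$ encodes the growth prescribed by $\widehat{\varPhi}^*_j$,'' and thence via ``layer-cake'' to $\int_\Omega\widehat{\varPhi}^*_j(|u|/C)\,dx\le 1$, is precisely the content of the theorem; asserting that $(\ref{17})$ ``produces the required modular bound'' does not establish it. Concretely, after the dyadic summation one controls $|u(x)|$ by a series of the form $\sum_k r_k^{s_j}\,(\widehat{\varPhi}^j_x)^{-1}\bigl(c\,r_k^{-N}E_k(x)\bigr)$ with $E_k(x)$ a localized Gagliardo modular, and converting this into integrability of $\widehat{\varPhi}^*_j(|u|)$ requires a quantitative comparison (typically a Hardy-type or rearrangement inequality) that your sketch does not supply. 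Two subsidiary points also need repair: the claim that the $\overline{\varPhi^j}_{x,\cdot}$-Luxemburg norm of $y\mapsto|x-y|^{s_j}$ on $B_r(x)$ is ``comparable to $r^{s_j}$'' is false as stated --- it carries an extra factor of order $1/\overline{\varPhi^j}_{x,\cdot}^{-1}(|B_r|^{-1})$, and the standard fix is to use Jensen's inequality with the convex $\varPhi^j_{x,y}$ against the normalized measure on $B_r(x)$ rather than H\"older; and the uniformity in $x$ of all constants (e.g.\ in $L_{\widehat{\varPhi}^j_x}(\Omega)\hookrightarrow L^1(\Omega)$ and in comparing $\varPhi^j_{x,y}$ with the diagonal $\widehat{\varPhi}^j_x$) needs a lower bound of the type $\inf_{x,y}\varPhi^j_{x,y}(1)>0$, which does not follow from $(\varPhi_1)$ or from $\mathcal{B}_f$ (a supremum condition) alone and must be imposed or extracted from the hypotheses of \cite{benkirane2}.
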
  
                     
  Next, we recall some useful properties of variable exponent spaces. For more details we refer the reader to \cite{23,27}, and the references therein.\\ 
    Consider the set
     $$C_+(\overline{\Omega})=\left\lbrace q\in C(\overline{\Omega}): q(x)>1\text{ for all } x \in\overline{\Omega}\right\rbrace .$$
     For all $q\in C_+(\overline{\Omega}) $, we define $$q^{+}= \underset{x\in \overline{\Omega}}{\sup}~q(x) \quad\text{and}\quad q^{-}= \underset{x\in \overline{\Omega}}{\inf}~q(x).$$
  For any  $q\in C_+(\overline{\Omega}) $, we define the variable exponent Lebesgue space as $$L^{q(x)}(\Omega)=\bigg\{u:\Omega\longrightarrow \mathbb{R} ~~\text{measurable}: \int_{\Omega}|u(x)|^{q(x)}dx<+\infty
  \bigg\}.$$
  This vector space endowed with the \textit{Luxemburg norm}, which is defined by
  $$\|u\|_{q(x)}= \inf \bigg\{\lambda>0:\int_{\Omega}\bigg|\frac{u(x)}{\lambda}\bigg|^{q(x)}dx \leqslant1 \bigg\}$$
  is a separable reflexive Banach space.
  
   A very important role in manipulating the generalized Lebesgue spaces with variable exponent is played by the modular of the $L^{q(x)}(\Omega)$ space, which defined by
   $$\begin{array}{clc}
   \hspace{-0.8cm}\rho_{q(.)}: L^{q(x)}(\Omega)\longrightarrow\mathbb{R}\\
    \hspace{5.3cm}u\longmapsto\rho_{q(.)}(u)=\displaystyle\int_{\Omega}|u(x)|^{q(x)}dx.
   \end{array}$$
  \begin{prop}$\label{anproop5}$
  Let $u\in  L^{q(x)}(\Omega) $, then we have
  \begin{enumerate}[label=(\roman*)]
  \item $\|u\|_{L^{q(x)}(\Omega)}<1$ $(resp. =1, >1)$ $\Leftrightarrow$ $ \rho_{q(.)}(u)<1$ $(resp. =1, >1)$,
  \item  $\|u\|_{L^{q(x)}(\Omega)}<1$ $\Rightarrow$ $\|u\|^{q{+}}_{L^{q(x)}(\Omega)}\leqslant \rho_{q(.)}(u)\leqslant \|u\|^{q{-}}_{L^{q(x)}(\Omega)}$,
  \item  $\|u\|_{L^{q(x)}(\Omega)}>1$ $\Rightarrow$ $\|u\|^{q{-}}_{L^{q(x)}(\Omega)}\leqslant \rho_{q(.)}(u)\leqslant \|u\|^{q{+}}_{L^{q(x)}(\Omega)}$.
  \end{enumerate}
  \end{prop}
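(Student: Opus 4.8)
The plan is to deduce all of (i)--(iii) from two elementary ingredients: the scaling behaviour of the modular $\rho_{q(\cdot)}$ under multiplication by a scalar, and the normalisation identity $\rho_{q(\cdot)}\big(u/\|u\|_{q(x)}\big)=1$ valid for every $u\neq 0$. First I would record the scaling estimates. Since $|\sigma u(x)|^{q(x)}=\sigma^{q(x)}|u(x)|^{q(x)}$ and $q^{-}\leqslant q(x)\leqslant q^{+}$ on $\overline{\Omega}$, comparing $\sigma^{q(x)}$ with $\sigma^{q^{+}}$ and $\sigma^{q^{-}}$ and integrating over $\Omega$ gives, for every measurable $u$,
\[
\sigma^{q^{+}}\rho_{q(\cdot)}(u)\leqslant \rho_{q(\cdot)}(\sigma u)\leqslant \sigma^{q^{-}}\rho_{q(\cdot)}(u)\qquad\text{if }0<\sigma\leqslant 1,
\]
\[
\sigma^{q^{-}}\rho_{q(\cdot)}(u)\leqslant \rho_{q(\cdot)}(\sigma u)\leqslant \sigma^{q^{+}}\rho_{q(\cdot)}(u)\qquad\text{if }\sigma\geqslant 1 .
\]
In particular $\rho_{q(\cdot)}(\sigma u)\leqslant\max(\sigma^{q^{-}},\sigma^{q^{+}})\,\rho_{q(\cdot)}(u)$ for all $\sigma>0$, so $\rho_{q(\cdot)}$ is everywhere finite on $L^{q(x)}(\Omega)$.

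Next I would establish the normalisation identity. For $u\neq 0$ the function $\sigma\mapsto\rho_{q(\cdot)}(u/\sigma)$ is nonincreasing on $(0,\infty)$, tends to $+\infty$ as $\sigma\to 0^{+}$ and to $0$ as $\sigma\to+\infty$, and is continuous: the limit from below follows from monotone convergence, and the limit from above from dominated convergence, using that $t\mapsto t^{q(x)}$ obeys the $\Delta_2$-condition uniformly in $x$ because $q^{+}<\infty$ (here I use that $\overline{\Omega}$ is compact and $q\in C_{+}(\overline{\Omega})$), so that $\rho_{q(\cdot)}(u/\sigma)<\infty$ for every $\sigma>0$. Hence $\{\sigma>0:\rho_{q(\cdot)}(u/\sigma)\leqslant 1\}$ is a closed half-line $[\,\|u\|_{q(x)},\infty)$ with $0<\|u\|_{q(x)}<\infty$, and evaluating at the endpoint yields $\rho_{q(\cdot)}\big(u/\|u\|_{q(x)}\big)=1$. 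I expect this continuity/normalisation step to be the only genuinely delicate point; everything else is bookkeeping.

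Finally I would combine the two ingredients. Set $v=u/\|u\|_{q(x)}$, so $\rho_{q(\cdot)}(v)=1$ and $u=\sigma v$ with $\sigma=\|u\|_{q(x)}$. If $\|u\|_{q(x)}<1$, applying the first scaling estimate to $v$ with $0<\sigma<1$ gives $\|u\|_{q(x)}^{q^{+}}\leqslant\rho_{q(\cdot)}(u)\leqslant\|u\|_{q(x)}^{q^{-}}$, which is precisely (ii), and since $q^{-}>1$ this forces $\rho_{q(\cdot)}(u)<1$. If $\|u\|_{q(x)}>1$, the second scaling estimate with $\sigma>1$ gives $\|u\|_{q(x)}^{q^{-}}\leqslant\rho_{q(\cdot)}(u)\leqslant\|u\|_{q(x)}^{q^{+}}$, which is (iii), and this forces $\rho_{q(\cdot)}(u)>1$. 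If $\|u\|_{q(x)}=1$ then $\rho_{q(\cdot)}(u)=\rho_{q(\cdot)}(v)=1$, and the case $u=0$ is immediate. This proves the forward implications of (i) in each of the three mutually exclusive and exhaustive alternatives for $\|u\|_{q(x)}$; since the alternatives $\rho_{q(\cdot)}(u)<1$, $=1$, $>1$ are likewise mutually exclusive and exhaustive, the converse implications in (i) follow automatically, while (ii) and (iii) have already been obtained.
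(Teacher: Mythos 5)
Your proof is correct, and it is essentially the standard argument for this classical fact: the paper itself states Proposition \ref{anproop5} without proof, citing the references \cite{23,27}, where exactly this combination of the scaling estimates for the modular and the normalisation identity $\rho_{q(\cdot)}\bigl(u/\|u\|_{q(x)}\bigr)=1$ (whose continuity justification you correctly identify as the only delicate point) is used. Nothing further is needed.
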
                            
  \section{Some qualitative properties of fractional Anisotropic Musielak-Sobolev spaces}
In order to study Problem \hyperref[P]{$(P_a)$}, it is important to encode the boundary condition $u=0$ in $\R^N\setminus \Omega$  in the weak formulation. In the case of fractional Sobolev space with variable exponent, Azroul et al \cite{SRH} introduced a new function space to study the variational functionals related to the fractional $p(x,.)$-Laplacian operator by observing the interaction between $\Omega$ and $\R^N\setminus \Omega$. Motivated by the above paper, and due to the nonlocality of the operator $(-\Delta)^s_{a_i(.)}$, we introduce the new fractional Orlicz-Sobolev space 
as follows
\begingroup\makeatletter\def\f@size{9}\check@mathfonts $$W^{s_i}L_{\varPhi_{x,y}^i}(Q)=\Bigg\{u\in L_{\widehat{\varPhi}_x^i}(\Omega) ~ :~ \int_{Q}  \varPhi_{x,y}^i\left( \dfrac{\lambda|u(x)- u(y)|}{|x-y|^{s_i}}\right) \dfrac{dxdy}{|x-y|^N}< \infty~~ \text{
 for some }\lambda>0 \Bigg\},
$$\endgroup
for any $i=1,...,N$, where $Q=\R^{2N}\setminus (C\Omega\times C\Omega)$ with $C\Omega=\R^N \setminus \Omega$. This spaces are equipped with the norm,
\begin{equation}\label{an6}
||u||_{i}=||u||_{\widehat{\varPhi}^i_x}+[u]_{i},
\end{equation}
where $[.]_{i}$ is the Gagliardo seminorm, defined by 
$$[u]_{i}=\inf \Bigg\{\lambda > 0 :  \int_{Q} \varPhi_{x,y}^i\left( \dfrac{|u(x)- u(y)|}{\lambda|x-y|^{s_i}}\right) \dfrac{dxdy}{|x-y|^N}\leqslant 1 \Bigg\}.
$$ 
 Similar to the spaces $(W^{s_i}L_{\varPhi_{x,y}^i}(\Omega), \|.\|_{s,\varPhi_i})$ we have that $(W^{s_i}L_{\varPhi_{x,y}^i}(Q),  \|.\|_{i})$ are a separable reflexive Banach spaces.\\
 
 Now, let $W_0^{s_i}L_{\varPhi^i_{x,y}}(Q)$ denotes the following linear subspace of $W^{s_i}L_{\varPhi^i_{x,y}}(Q),$
 $$W_0^{s_i}L_{\varPhi^i_{x,y}}(Q)=\left\lbrace u\in W^{s_i}L_{\varPhi_{x,y}^i}(Q) ~:~ u=0 \text{ a.e in } \R^N \setminus \Omega\right\rbrace $$
 with the norm
 $$[u]_{i}=\inf \Bigg\{\lambda > 0 :  \int_{Q} \varPhi^i_{x,y}\left( \dfrac{|u(x)- u(y)|}{\lambda|x-y|^{s_i}}\right) \dfrac{dxdy}{|x-y|^N}\leqslant 1 \Bigg\}.
 $$ 
 It is easy to check that $[u]_{i}$ is a norm on $W_0^{s_i}L_{\varPhi^i_{x,y}}(Q)$ (see Corollary $\ref{ann}$).
 
 In the following theorem, we compare the spaces $W_0^{s_i}L_{\varPhi^i_{x,y}}(\Omega)$ and $W_0^{s_i}L_{\varPhi^i_{x,y}}(Q)$.

 \begin{thm}\label{an2} For any $i=1,...,N$, the following assertions hold:
 \begin{itemize}
 \item[1)] The continuous embedding $$W^{s_i}L_{\varPhi^i_{x,y}}(Q)\subset W^{s_i}L_{\varPhi^i_{x,y}}(\Omega)$$
  holds true.\\
 \item[2)] If $u\in W_0^{s_i}L_{\varPhi^i_{x,y}}(Q)$, then $u\in W^{s_i}L_{\varPhi^i_{x,y}}(\R^N)$ and 
  $$||u||_{s_i,\varPhi_{x,y}^i}\leqslant ||u||_{W^{s_i}L_{\varPhi_{x,y}^i}(\R^N)}=||u||_{i}.$$
   \end{itemize}
   \end{thm}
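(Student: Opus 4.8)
The plan is to prove the two assertions separately, relying on the decomposition $Q = (\Omega\times\Omega)\cup(\Omega\times C\Omega)\cup(C\Omega\times\Omega)$ and on the symmetry $\varPhi^i(x,y,t)=\varPhi^i(y,x,t)$ together with the growth bounds from Lemma~\ref{2.2..}.

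For part 1), let $u\in W^{s_i}L_{\varPhi^i_{x,y}}(Q)$. Since $\Omega\times\Omega\subset Q$, we trivially have
$$\int_\Omega\int_\Omega\varPhi^i_{x,y}\left(\frac{\lambda|u(x)-u(y)|}{|x-y|^{s_i}}\right)\frac{dxdy}{|x-y|^N}\le\int_Q\varPhi^i_{x,y}\left(\frac{\lambda|u(x)-u(y)|}{|x-y|^{s_i}}\right)\frac{dxdy}{|x-y|^N}<\infty$$
for some $\lambda>0$, so $u\in W^{s_i}L_{\varPhi^i_{x,y}}(\Omega)$. To get the continuity of the embedding, first I would observe that the Gagliardo seminorm over $\Omega\times\Omega$ is dominated by the one over $Q$: if $\int_Q\varPhi^i_{x,y}(|u(x)-u(y)|/(\lambda|x-y|^{s_i}))\,dxdy/|x-y|^N\le 1$ then the same holds with $\Omega\times\Omega$ in place of $Q$, hence $[u]_{s_i,\varPhi^i_{x,y}}\le[u]_i$. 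Since the $L_{\widehat\varPhi^i_x}(\Omega)$-part of the norm is identical in both spaces, adding the two contributions gives $\|u\|_{s_i,\varPhi^i_{x,y}}\le\|u\|_i$, which is exactly the continuous embedding with constant $1$.

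For part 2), let $u\in W_0^{s_i}L_{\varPhi^i_{x,y}}(Q)$, so $u=0$ a.e. in $C\Omega$. The key identity is that, because $u$ vanishes outside $\Omega$ and by the symmetry of the kernel and of $\varPhi^i$,
$$\int_{\R^{2N}}\varPhi^i_{x,y}\!\left(\frac{|u(x)-u(y)|}{\lambda|x-y|^{s_i}}\right)\!\frac{dxdy}{|x-y|^N}
=\int_Q\varPhi^i_{x,y}\!\left(\frac{|u(x)-u(y)|}{\lambda|x-y|^{s_i}}\right)\!\frac{dxdy}{|x-y|^N},$$
since the integrand is identically zero on $C\Omega\times C\Omega$ (there both $u(x)$ and $u(y)$ vanish). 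This gives at once that $u\in W^{s_i}L_{\varPhi^i_{x,y}}(\R^N)$ and that the two Gagliardo seminorms — the one defining $\|u\|_{W^{s_i}L_{\varPhi^i_{x,y}}(\R^N)}$ and $[u]_i$ — coincide, which yields $\|u\|_{W^{s_i}L_{\varPhi^i_{x,y}}(\R^N)}=\|u\|_i$ once one checks that the $L_{\widehat\varPhi^i_x}(\R^N)$ and $L_{\widehat\varPhi^i_x}(\Omega)$ norms of $u$ agree (again because $u\equiv 0$ off $\Omega$). Finally, $\|u\|_{s_i,\varPhi^i_{x,y}}\le\|u\|_i$ follows from part 1) applied to $u$, or directly from the seminorm comparison $[u]_{s_i,\varPhi^i_{x,y}}\le[u]_i$ proved above together with the equality of the Luxemburg-norm parts.

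The only genuinely delicate point is justifying that $[u]_i$ really is a norm on $W_0^{s_i}L_{\varPhi^i_{x,y}}(Q)$ — i.e. that $[u]_i=0$ forces $u=0$ — which is where one needs a Poincaré-type control of $\|u\|_{\widehat\varPhi^i_x}$ by $[u]_i$; the statement defers this to Corollary~\ref{ann}, so here I would simply invoke it. Modulo that, all the steps above are elementary consequences of monotonicity of $\varPhi^i_{x,y}$, the set inclusion $\Omega\times\Omega\subset Q\subset\R^{2N}$, and the vanishing of $u$ outside $\Omega$; no compactness or embedding theorem is needed for this particular statement.
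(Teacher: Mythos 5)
Your proof is correct and follows essentially the same route as the paper: part 1) via the inclusion $\Omega\times\Omega\subset Q$ and the resulting comparison of the admissible sets defining the two Gagliardo seminorms, and part 2) via the vanishing of the integrand on $C\Omega\times C\Omega$ (so the $\R^{2N}$ and $Q$ modulars coincide) together with the equality of the Luxemburg norms of $u$ on $\Omega$ and $\R^N$. Your closing remark on the norm property of $[\cdot]_i$ matches the paper's deferral to Corollary~\ref{ann}, so nothing is missing.
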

   \begin{proof}[\textbf{Proof}]
   $1)$ Let $u\in W^{s_i}L_{\varPhi_{x,y}^i}(Q)$, since $\Omega\times \Omega\subsetneq Q,$ then for all $\lambda>0$ we have 
    \begin{equation}\label{an1}
   \int_{\Omega}\int_{\Omega} \varPhi_{x,y}^i\left( \dfrac{|u(x)- u(y)|}{\lambda|x-y|^{s_i}}\right) \dfrac{dxdy}{|x-y|^N}\leqslant \int_{Q} \varPhi_i\left( \dfrac{|u(x)- u(y)|}{\lambda|x-y|^{s_i}}\right) \dfrac{dxdy}{|x-y|^N}.    \end{equation}
    We set 
    $$\mathcal{A}^{s_i}_{\lambda,\Omega\times \Omega}=\Bigg\{\lambda > 0 :  \int_{\Omega}\int_{\Omega} \varPhi_{x,y}^i\left( \dfrac{|u(x)- u(y)|}{\lambda|x-y|^{s_i}}\right) \dfrac{dxdy}{|x-y|^N}\leqslant 1 \Bigg\}$$
    and 
     $$\mathcal{A}^{s_i}_{\lambda,Q}=\Bigg\{\lambda > 0 :  \int_{Q} \varPhi_i\left( \dfrac{|u(x)- u(y)|}{\lambda|x-y|^{s_i}}\right) \dfrac{dxdy}{|x-y|^N}\leqslant 1 \Bigg\}.$$
     By $(\ref{an1})$, it is easy to see that $\mathcal{A}^{s_i}_{\lambda,Q}\subset \mathcal{A}^{s_i}_{\lambda,\Omega\times\Omega}$. Hence 
     \begin{equation}\label{an}
    [u]_{s_i,\varPhi_{x,y}^i}=\inf\limits_{\lambda>0}\mathcal{A}^{s_i}_{\lambda,\Omega\times\Omega}\leqslant[u]_{i}=\inf\limits_{\lambda>0}\mathcal{A}^{s_i}_{\lambda,Q}. \end{equation}
  Consequently, by definitions of the norms $\|u\|_{s_i,\varPhi_{x,y}^i}$ and $\|u\|_{i},$ we obtain
    $$ \|u\|_{s_i,\varPhi_{x,y}^i}\leqslant \|u\|_{i}<\infty.$$
   $2)$ Let $u\in W_0^{s_i}L_{\varPhi_{x,y}^i}(Q)$, then $u=0$ in $\R^N\setminus \Omega$. So, $\|u\|_{L_{\widehat{\varPhi}^i_x}(\Omega)}=\|u\|_{\widehat{\varPhi}^i_x(\R^N)}.$ Since 
    $$\int_{\R^{2N}} \varPhi_{x,y}^i\left( \dfrac{|u(x)- u(y)|}{\lambda|x-y|^{s_i}}\right) \dfrac{dxdy}{|x-y|^N}=\int_{Q} \varPhi_{x,y}^i\left( \dfrac{|u(x)- u(y)|}{\lambda|x-y|^{s_i}}\right) \dfrac{dxdy}{|x-y|^N}$$
    for all $\lambda>0$. Then $[u]_{W^{s_i}L_{\varPhi_{x,y}^i}(\R^N)}=[u]_{i}$. Thus, we get
    
     $$||u||_{s_i,\varPhi_{x,y}^i}\leqslant ||u||_{W^{s_i}L_{\varPhi_{x,y}^i}(\R^N)}=||u||_{i}.$$
    \end{proof}
    \begin{coro}\label{ann}(Poincar\'{e} inequality)
    Let $\Omega$ be a bounded subset of $\R^N$. Then there exists a positive
    constant $c$ such that,
    $$
    \|u\|_{\widehat{\varPhi}_i^x}\leqslant c[u]_i, ~~~~\forall u\in W^{s_i}_0L_{\varPhi_{x,y}^i}(Q).$$
    \end{coro}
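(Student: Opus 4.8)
The plan is to exploit the defining feature of $W_0^{s_i}L_{\varPhi^i_{x,y}}(Q)$ — that $u$ vanishes a.e. outside the bounded set $\Omega$ — so that, whenever $x\in\Omega$ and $y$ lies in a fixed piece of the exterior, the Gagliardo difference $|u(x)-u(y)|$ collapses to $|u(x)|$; integrating in $y$ over that piece then reconstructs $|u(x)|$, hence the $L_{\widehat\varPhi^i_x}$-modular of $u$. First I would reduce the claim to a modular estimate: by homogeneity of the Luxemburg (semi)norms it suffices to produce $c>0$ with $\|u\|_{\widehat\varPhi^i_x}\le c$ whenever $[u]_i\le 1$, and since $\varPhi^i_{x,y}\in\Delta_2$ the condition $[u]_i\le 1$ is equivalent to the modular bound
\[
\rho_Q(u):=\int_Q \varPhi^i_{x,y}\!\left(\frac{|u(x)-u(y)|}{|x-y|^{s_i}}\right)\frac{dx\,dy}{|x-y|^N}\le 1 .
\]
The degenerate case $[u]_i=0$ will be disposed of along the way, and this is also what yields that $[\cdot]_i$ is a genuine norm on $W_0^{s_i}L_{\varPhi^i_{x,y}}(Q)$.

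Next comes the geometric step. Since $\Omega$ is bounded, fix $x_0$ and $R>0$ with $\overline\Omega\subset B_R(x_0)$ and set $K:=B_{3R}(x_0)\setminus\overline{B_{2R}(x_0)}$, a bounded open set with $|K|>0$, $\overline K\cap\overline\Omega=\emptyset$, $\Omega\times K\subset Q$, and $|x-y|\le 4R=:D$ for $(x,y)\in\Omega\times K$. For $u\in W_0^{s_i}L_{\varPhi^i_{x,y}}(Q)$ we have $u(y)=0$ for a.e. $y\in K$, so, using that $\varPhi^i_{x,y}$ is increasing and $|x-y|\le D$,
\[
1 \ge \rho_Q(u) \ge \int_\Omega\!\int_K \varPhi^i_{x,y}\!\left(\frac{|u(x)|}{|x-y|^{s_i}}\right)\frac{dy\,dx}{|x-y|^N} \ge \frac{1}{D^N}\int_\Omega\!\int_K \varPhi^i_{x,y}\!\left(\frac{|u(x)|}{D^{s_i}}\right)dy\,dx .
\]
At this point the hard part begins: one must replace the off-diagonal densities $\varPhi^i_{x,y}$, $y\in K$, by the diagonal density $\widehat\varPhi^i_x=\varPhi^i_{x,x}$. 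Invoking the structural hypotheses on the family $\{\varPhi^i_{x,y}\}$ together with the boundedness condition $(\varPhi_3)$ and the two-sided growth bound $(\varPhi_1)$ (with a case split according to whether $|u(x)|\le 1$ or $|u(x)|>1$, controlled through Lemma~\ref{2.2..}), one arrives at a constant $\kappa>0$, independent of $x$, such that $\int_K\varPhi^i_{x,y}(t)\,dy\ge\kappa\,\widehat\varPhi^i_x(t)$ for all $x\in\overline\Omega$ and $t\ge 0$; this off-diagonal-to-diagonal comparison is the real obstacle, since it is precisely where the Musielak dependence on $(x,y)$ interacts delicately with the uniform growth control $(\varPhi_1)$–$(\varPhi_3)$.

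Granting the comparison, the argument closes by routine $\Delta_2$-bookkeeping: the last display gives $\int_\Omega \widehat\varPhi^i_x\bigl(|u(x)|/D^{s_i}\bigr)\,dx\le D^N/\kappa=:C$, and since $\widehat\varPhi^i_x\in\Delta_2$, Lemma~\ref{2.2..} applied to $\widehat\varPhi^i_x$ (which satisfies the analogue \eqref{A2} of $(\varPhi_1)$) yields $\widehat\varPhi^i_x(t/\mu)\le\mu^{-\varphi_i^-}\widehat\varPhi^i_x(t)$ for $\mu\ge 1$; taking $\mu=\max\{1,C^{1/\varphi_i^-}\}$ forces $\int_\Omega\widehat\varPhi^i_x\bigl(|u(x)|/(\mu D^{s_i})\bigr)\,dx\le 1$, that is $\|u\|_{\widehat\varPhi^i_x}\le\mu D^{s_i}=:c$, which is the asserted Poincaré inequality. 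Finally, if $[u]_i=0$ then the same chain forces $\widehat\varPhi^i_x(|u(x)|/D^{s_i})=0$ for a.e. $x\in\Omega$, hence $u\equiv 0$, so $[\cdot]_i$ separates points and is a norm. I expect essentially all the difficulty to reside in the off-diagonal-to-diagonal comparison of the modular functions; the remainder is the standard vanishing-exterior Poincaré mechanism together with $\Delta_2$ estimates.
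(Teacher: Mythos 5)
Your proposal takes a genuinely different and more ambitious route than the paper, and it contains a real gap at its central step. The paper's own proof is two lines: by Theorem \ref{an2}, $u\in W^{s_i}L_{\varPhi^i_{x,y}}(\Omega)$ with $[u]_{s_i,\varPhi^i_{x,y}}\leqslant[u]_i$ (relation (\ref{an})), and the Poincar\'e inequality on $\Omega$ is then quoted from \cite[Theorem 2.3]{benkirane2}; notably, that argument never uses the vanishing of $u$ outside $\Omega$, only the monotonicity of the seminorms under enlarging the integration domain. You instead attempt a self-contained proof via the vanishing-exterior mechanism. Your reduction to a modular estimate and the closing $\Delta_2$ bookkeeping are sound, but the step you yourself flag as ``the real obstacle'' --- the comparison $\int_K\varPhi^i_{x,y}(t)\,dy\geqslant\kappa\,\widehat\varPhi^i_x(t)$ for all $x\in\overline\Omega$ and all $t\geqslant0$ --- is asserted, not proved, and it does not follow from $(\varPhi_1)$--$(\varPhi_3)$.

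Two concrete problems. First, the paper defines $a^i$, $\varphi^i$, $\varPhi^i$ only on $\overline\Omega\times\overline\Omega\times\R$, whereas your annulus $K$ is disjoint from $\overline\Omega$; so $\varPhi^i_{x,y}$ for $y\in K$ is not covered by any stated hypothesis, and no assumption controls the densities on $Q\setminus(\overline\Omega\times\overline\Omega)$. Second, even granting an extension satisfying $(\varPhi_1)$ and $(\varPhi_3)$ globally, the comparison fails quantitatively: $(\varPhi_3)$ gives only the upper bound $\sup_{x,y}\varPhi^i_{x,y}(1)<\infty$ and no lower bound, so $\varPhi^i_{x,y}(1)$ may degenerate to $0$ off the diagonal while $\widehat\varPhi^i_x(1)$ stays of order one; and even assuming a uniform lower bound $\varPhi^i_{x,y}(1)\geqslant c_0>0$, Lemma \ref{2.2..} only yields $\varPhi^i_{x,y}(t)\geqslant c_0\,t^{\varphi_i^-}$ from below and $\widehat\varPhi^i_x(t)\leqslant C\,t^{\varphi_i^+}$ from above for $t>1$, so the ratio $\int_K\varPhi^i_{x,y}(t)\,dy\,/\,\widehat\varPhi^i_x(t)$ can tend to $0$ as $t\to\infty$ whenever $\varphi_i^-<\varphi_i^+$. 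Your scheme therefore needs an extra structural hypothesis tying off-diagonal to diagonal densities (e.g.\ $\varPhi^i_{x,y}(t)\geqslant c\,\widehat\varPhi^i_x(t)$ uniformly on $\Omega\times K$), which the paper does not assume. The intended proof sidesteps all of this by working only over $\Omega\times\Omega$ and citing the known Poincar\'e inequality there.
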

     \begin{proof}[\textbf{Proof}]
 Let $u\in W^{s_i}_0L_{\varPhi_{x,y}^i}(Q)$, by Theorem $\ref{an2}$, we have $u\in W^{s_i}_0L_{\varPhi_{x,y}^i}(\Omega)$. Then by \cite[Theorem 2.3]{benkirane2},  there exists a positive
     constant $c$ such that,
     $$
     \|u\|_{\widehat{\varPhi}_i^x}\leqslant c[u]_{s_i,\varPhi_{x,y}^i}.$$
    Combining the above inequality with $(\ref{an})$, we obtain that 
     $$
         \|u\|_{\widehat{\varPhi}_i^x}\leqslant c[u]_i, ~~~~\forall u\in W^{s_i}_0L_{\varPhi_{x,y}^i}(Q).$$
     \end{proof}    
 Now, we introduce a natural  fractional anisotropic Musielak-Sobolev spaces $W_0^{\overrightarrow{s}}L_{\overrightarrow{\varPhi}}(Q)$, that will enable us to study Problem \hyperref[P]{$(P_a)$}. For this purpose, let us denote by $\overrightarrow{\varPhi} : \Omega \longrightarrow \R^N$ the vectorial function $\overrightarrow{\varPhi}=(\varPhi_1,...,\varPhi_N)$.
 We define $W_0^{\overrightarrow{s}}L_{\overrightarrow{\varPhi}}(Q)$, the fractional anisotropic Musielak-Sobolev space as follows the closure of $C^\infty_0(\Omega)$ with respect to the norm:
 $$\|u\|_{\overrightarrow{\varPhi}}=\sum_{i=1}^{N}[u]_{i}.$$
Denoting $X=L_{\widehat{\varPhi}_x^1}(\Omega)\times...\times L_{\widehat{\varPhi}_x^N}(\Omega)$ and considering  the operator $T : W_0^{\overrightarrow{s}}L_{\overrightarrow{\varPhi}}(Q)\longrightarrow X,$ defined by $T(u)=\left( D^{s_1}u,...,D^{s_N}u\right)$
where $$D^{s_i}u=\dfrac{u(x)-u(y)}{|x-y|^{s_i}}.$$ It is clear that $W_0^{\overrightarrow{s}}L_{\overrightarrow{\varPhi}}(Q)$ and $X$ are isometric by $T$, since $$\|T(u)\|_{X}=\sum_{i=1}^{N}[u]_{i}=\|u\|_{\overrightarrow{\varPhi}}.$$
Thus $T(W_0^{\overrightarrow{s}}L_{\overrightarrow{\varPhi}}(Q))$ is a closed subspace of $X$, which is a reflexive Banach space. By  \cite[Proposition III.17]{breziz}, it follows that  $T(W_0^{\overrightarrow{s}}L_{\overrightarrow{\varPhi}}(Q))$ is reflexive and consequently  $W_0^{\overrightarrow{s}}L_{\overrightarrow{\varPhi}}(Q)$ is also reflexive Banach space.
 
On the other hand, in order to facilitate the manipulation of the space $W_0^{\overrightarrow{s}}L_{\overrightarrow{\varPhi}}(Q)$, we introduce $\overrightarrow{\varphi}^+, \overrightarrow{\varphi}^-\in \R^N$ as
$$ \overrightarrow{\varphi}^+=(\varphi_1^+,...,\varphi_N^+),~~ \text{and}~~ \overrightarrow{\varphi}^-=(\varphi_1^-,...,\varphi_N^-),$$
 and $\varphi^+_{max}$, $\varphi^-_{max}$, $\varphi^-_{min}$ as 
 $$ \varphi^+_{max}=\max\left\lbrace \varphi_1^+,...,\varphi_N^+\right\rbrace,~~ \varphi^-_{max}=\max\left\lbrace \varphi_1^-,...,\varphi_N^-\right\rbrace,$$ $$ \varphi^-_{min}=\min\left\lbrace \varphi_1^-,...,\varphi_N^-\right\rbrace.$$
 Throughout this paper we assume that 
 \begin{equation}\label{an8}
   \lim\limits_{t\rightarrow \infty}\dfrac{|t|^{q^+}}{(\varPhi_x^j)_*(kt)}=0 ~~\forall k>0,
   \end{equation}
   where $j\in\left\lbrace 1,...,N\right\rbrace $ is given in $(\ref{15})$.
 \begin{thm}\label{anth5}
   Let $\Omega$  be a bounded open
             subset of  $\R^N$ with $C^{0,1}$-regularity 
               and bounded boundary. Then 
             the embedding    
                    \begin{equation}\label{an27}
                     W_0^{\overrightarrow{s}}L_{\overrightarrow{\varPhi}}(Q)\hookrightarrow L^{q(x)}(\Omega)
                    \end{equation}
                    is compact.
                    \end{thm}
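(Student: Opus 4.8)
The plan is to reduce the anisotropic assertion to the single distinguished direction $j$ singled out by (\ref{15}) and (\ref{an8}), and then to concatenate the continuous and compact embeddings already available in Section~2 and in Theorem~\ref{an2}. First I would note that, by the very definition of the norm $\|u\|_{\overrightarrow{\varPhi}}=\sum_{i=1}^{N}[u]_{i}$, one has $[u]_{j}\leqslant\|u\|_{\overrightarrow{\varPhi}}$ for every $u\in C^\infty_0(\Omega)$; passing to the completions, this produces a continuous inclusion $W_0^{\overrightarrow{s}}L_{\overrightarrow{\varPhi}}(Q)\hookrightarrow W_0^{s_j}L_{\varPhi^j_{x,y}}(Q)$, where on the target $[\cdot]_{j}$ is a genuine norm by Corollary~\ref{ann}. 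Combining this with part~1) of Theorem~\ref{an2} (which gives the continuous embedding $W^{s_j}L_{\varPhi^j_{x,y}}(Q)\subset W^{s_j}L_{\varPhi^j_{x,y}}(\Omega)$ with $\|u\|_{s_j,\varPhi^j_{x,y}}\leqslant\|u\|_{j}$) and with the Poincar\'e inequality of Corollary~\ref{ann} (so that $\|u\|_{j}=\|u\|_{\widehat{\varPhi}^j_x}+[u]_{j}\leqslant (c+1)[u]_{j}$ on the zero-trace space), I obtain a continuous embedding $W_0^{\overrightarrow{s}}L_{\overrightarrow{\varPhi}}(Q)\hookrightarrow W^{s_j}L_{\varPhi^j_{x,y}}(\Omega)$.

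Next I would draw the compactness out of the distinguished direction. Condition (\ref{15}) makes the inverse Musielak conjugate $\widehat{\varPhi}^*_{j}$ of (\ref{17}) meaningful, and condition (\ref{an8}) says exactly that $\lim_{t\to\infty}|t|^{q^+}/(\varPhi_x^j)_*(kt)=0$ for every $k>0$, i.e. that the constant-exponent Musielak function $B_x(t):=|t|^{q^+}$ satisfies $B_x\prec\prec\widehat{\varPhi}^*_{j}$. Theorem~\ref{th2.} then applies with this choice of $B_x$ and yields the \emph{compact} embedding $W^{s_j}L_{\varPhi^j_{x,y}}(\Omega)\hookrightarrow L_{B_x}(\Omega)$, and $L_{B_x}(\Omega)=L^{q^+}(\Omega)$ since $B_x$ does not depend on $x$. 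Finally, as $\Omega$ is bounded and $q(x)\leqslant q^+$ on $\overline{\Omega}$, the pointwise bound $|u(x)|^{q(x)}\leqslant|u(x)|^{q^+}+1$ together with Proposition~\ref{anproop5} gives a continuous embedding $L^{q^+}(\Omega)\hookrightarrow L^{q(x)}(\Omega)$. A compact operator composed with bounded ones being compact, chaining
$$W_0^{\overrightarrow{s}}L_{\overrightarrow{\varPhi}}(Q)\hookrightarrow W^{s_j}L_{\varPhi^j_{x,y}}(\Omega)\hookrightarrow L^{q^+}(\Omega)\hookrightarrow L^{q(x)}(\Omega)$$
proves the theorem: any bounded sequence in $W_0^{\overrightarrow{s}}L_{\overrightarrow{\varPhi}}(Q)$ is bounded in $W^{s_j}L_{\varPhi^j_{x,y}}(\Omega)$, hence has a subsequence converging strongly in $L^{q^+}(\Omega)$, and a fortiori in $L^{q(x)}(\Omega)$.

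The only genuinely delicate point is the opening reduction: one must verify that the completion defining $W_0^{\overrightarrow{s}}L_{\overrightarrow{\varPhi}}(Q)$ really embeds continuously into $W^{s_j}L_{\varPhi^j_{x,y}}(\Omega)$, which is exactly where Theorem~\ref{an2} is used to pass from the ``collar'' set $Q$ to $\Omega$ and where the Poincar\'e inequality of Corollary~\ref{ann} is used to recover the $L_{\widehat{\varPhi}^j_x}$-part of the norm from the Gagliardo seminorm. One must also recognise (\ref{an8}) as precisely the dominance $B_x\prec\prec\widehat{\varPhi}^*_{j}$ needed to invoke Theorem~\ref{th2.}; with these two identifications in place, the remainder is a routine composition of already-established embeddings.
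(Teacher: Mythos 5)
Your argument is correct and follows essentially the same route as the paper: restrict to the distinguished direction $j$ from (\ref{15})--(\ref{an8}), pass from $Q$ to $\Omega$ via Theorem~\ref{an2} and the chain $[u]_{s_j,\varPhi^j_{x,y}}\leqslant[u]_j\leqslant\|u\|_{\overrightarrow{\varPhi}}$, invoke the compact embedding of Theorem~\ref{th2.} into $L^{q^+}(\Omega)$, and finish with the continuous inclusion $L^{q^+}(\Omega)\hookrightarrow L^{q(x)}(\Omega)$. If anything, you are more explicit than the paper in identifying (\ref{an8}) as the condition $B_x\prec\prec\widehat{\varPhi}^*_j$ with $B_x(t)=|t|^{q^+}$ that makes the compactness part of Theorem~\ref{th2.} applicable, a point the paper's proof leaves implicit.
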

\begin{proof}[\textbf{Proof}]
Let $u\in  W_0^{\overrightarrow{s}}L_{\overrightarrow{\varPhi}}(Q)$, so $u\in  W_0^{s_i}{L_{\varPhi_{x,y}^i}}(Q)$ for any $i=1,...N$, and by Theorem $\ref{an2}$ we have $u\in  W_0^{s_i}{L_{\varPhi_{x,y}^i}}(\Omega)$ for any $i=1,...N$, then for $j\in \left\lbrace 1,...,N\right\rbrace $ given by ($\ref{an8}$), we can apply Theorem $\ref{th2.}$, and we have
$$\|u\|_{q^+}\leqslant c[u]_{s_i,\varPhi_{x,y}^j}\leqslant c[u]_{j}\leqslant c\sum_{i=1}^{N}[u]_{i}=c\|u\|_{\overrightarrow{\varPhi}}.$$
This implies that 
$$W_0^{\overrightarrow{s}}L_{\overrightarrow{\varPhi}}(Q)\hookrightarrow L^{q^+}(\Omega).$$
That fact combined with the continuous embedding of $L^{q^+}(\Omega)$ in $L^{q(x)}(\Omega)$  ensures that $W_0^{\overrightarrow{s}}L_{\overrightarrow{\varPhi}}(Q)$ is compactly embedded in $L^{q(x)}(\Omega)$.
\end{proof}
  We put $$\Psi(u)=\displaystyle\sum_{i=1}^{N}\int_{Q}\varPhi_{x,y}^i\left(\dfrac{|u(x)- u(y)|}{|x-y|^{s_i}}\right) \dfrac{dxdy}{|x-y|^N},$$                               
 \begin{prop}\label{norm}
 On $W_0^{\overrightarrow{s}}L_{\overrightarrow{\varPhi}}(Q)$ the following norm 
 $$\|u\|_{\overrightarrow{\varPhi}}=\sum_{i=1}^{N}[u]_{i},$$
 $$||u||_{\max}=\max_{i=1,...,N} [u]_{i}, $$
 $$||u||=\inf\left\lbrace \lambda>0 \text{ : } \Psi(u) \leqslant 1\right\rbrace, $$
  are equivalents.
   \end{prop}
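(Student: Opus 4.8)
The plan is to establish the two double inequalities
\[
\|u\|_{\max}\ \le\ \|u\|_{\overrightarrow{\varPhi}}\ \le\ N\,\|u\|_{\max}
\qquad\text{and}\qquad
\|u\|_{\max}\ \le\ \|u\|\ \le\ N^{1/\varphi^-_{min}}\,\|u\|_{\max},
\]
which together give the equivalence. Writing $\rho_i(v):=\int_{Q}\varPhi^i_{x,y}\!\left(\frac{|v(x)-v(y)|}{|x-y|^{s_i}}\right)\frac{dxdy}{|x-y|^{N}}$, one has $\Psi=\sum_{i=1}^{N}\rho_i$, $[v]_i=\inf\{\lambda>0:\rho_i(v/\lambda)\le 1\}$ and (as usual) $\|u\|=\inf\{\lambda>0:\Psi(u/\lambda)\le 1\}$. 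The first double inequality is immediate from $\max_{1\le i\le N}t_i\le\sum_{i=1}^{N}t_i\le N\max_{1\le i\le N}t_i$ with $t_i=[u]_i$. (That $\|u\|_{\max}$ is a genuine norm, not merely a seminorm, follows from the Poincar\'e inequality of Corollary~\ref{ann}: if $[u]_i=0$ for every $i$ then $\|u\|_{\widehat{\varPhi}^i_x}=0$, hence $u=0$; and $\|u\|$ is a norm as the Luxemburg functional of the convex modular $\Psi$, by the standard Musielak--Orlicz argument.)

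The key tool would be a modular--seminorm comparison drawn from Lemma~\ref{2.2..}. Since $\rho_i\!\left(v/[v]_i\right)\le 1$ for $v\ne 0$ (by the definition of the Luxemburg seminorm together with Fatou's lemma), factoring the argument of $\rho_i(v)$ as $[v]_i$ times that of $\rho_i\!\left(v/[v]_i\right)$ and applying $(\ref{r11})$ when $0<[v]_i\le 1$ gives
\[
\rho_i(v)\ \le\ [v]_i^{\,\varphi_i^-}\,\rho_i\!\left(v/[v]_i\right)\ \le\ [v]_i^{\,\varphi_i^-}\qquad\text{for }0<[v]_i\le 1,
\]
and, symmetrically, $\rho_i(v)\ge[v]_i^{\varphi_i^-}$ for $[v]_i\ge 1$ via $(\ref{3.})$; only the displayed upper bound, together with the trivial implication $\rho_i(v)\le 1\Rightarrow[v]_i\le 1$, is actually needed below.

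The lower bound $\|u\|_{\max}\le\|u\|$ is then quick: $\Psi(u/\|u\|)\le 1$ by the definition of $\|u\|$ (and Fatou's lemma), so $\rho_i(u/\|u\|)\le\Psi(u/\|u\|)\le 1$ for each $i$, whence $[u]_i/\|u\|=[u/\|u\|]_i\le 1$, i.e. $[u]_i\le\|u\|$ for all $i$. For the upper bound I would assume $u\ne 0$ and $N\ge 2$ (for $N=1$ one has $\Psi=\rho_1$ and the three norms all equal $[u]_1$), and set $\lambda:=N^{1/\varphi^-_{min}}\|u\|_{\max}$. Then, for each $i$,
\[
\frac{[u]_i}{\lambda}\ \le\ \frac{\|u\|_{\max}}{\lambda}\ =\ N^{-1/\varphi^-_{min}}\ \le\ N^{-1/\varphi_i^-}\ <\ 1,
\]
so the comparison applies with $v=u/\lambda$ and yields $\rho_i(u/\lambda)\le([u]_i/\lambda)^{\varphi_i^-}\le N^{-1}$; summing over $i$ gives $\Psi(u/\lambda)\le 1$, hence $\|u\|\le\lambda=N^{1/\varphi^-_{min}}\|u\|_{\max}$. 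Together with the first double inequality this would yield the equivalence of the three norms.

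I expect the only delicate point to be the calibration in the upper bound: the single scaling factor $\lambda$ has to be chosen so that every $\rho_i(u/\lambda)$ is simultaneously dominated by $1/N$, even though the lower growth exponents $\varphi_i^-$ differ from one direction to another — which is exactly where the uniform constant $\varphi^-_{min}$ and the power-type inequalities of Lemma~\ref{2.2..} enter; the remaining steps are routine bookkeeping with the $\Delta_2$-condition $(\ref{r1})$.
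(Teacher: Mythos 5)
Your proposal is correct, and for two of the three comparisons it coincides with the paper's proof: the inequality $\|u\|_{\max}\leqslant\|u\|_{\overrightarrow{\varPhi}}\leqslant N\|u\|_{\max}$ is the same trivial observation, and your derivation of $[u]_i\leqslant\|u\|$ from $\rho_i(u/\|u\|)\leqslant\Psi(u/\|u\|)\leqslant 1$ is exactly the paper's argument for $\|u\|_{\overrightarrow{\varPhi}}\leqslant N\|u\|$. Where you genuinely diverge is the remaining bound $\|u\|\lesssim\|u\|_{\max}$. The paper tests the modular at $\lambda=2\|u\|_{\max}$ and uses only the superadditivity consequence $\varPhi^i_{x,y}(2t)\geqslant 2\varPhi^i_{x,y}(t)$ of \eqref{3.}, which yields $\Psi\!\left(u/(2\|u\|_{\max})\right)\leqslant N/2$; since $N/2>1$ when $N>2$, the paper's jump from this to $\|u\|\leqslant N\|u\|_{\max}$ is not actually justified as written (one would have to iterate the halving argument roughly $\log_2 N$ times). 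You instead calibrate the test value as $\lambda=N^{1/\varphi^-_{\min}}\|u\|_{\max}$ and use the full power-type inequality \eqref{r11} of Lemma~\ref{2.2..}, getting $\rho_i(u/\lambda)\leqslant\left([u]_i/\lambda\right)^{\varphi_i^-}\leqslant N^{-\varphi_i^-/\varphi^-_{\min}}\leqslant 1/N$ for each direction, so that the sum is $\leqslant 1$ and $\|u\|\leqslant N^{1/\varphi^-_{\min}}\|u\|_{\max}$ follows in one step. This buys an explicit constant and, more importantly, repairs the quantitative gap in the paper's version; your attention to the edge cases (the unit-ball property $\rho_i(u/[u]_i)\leqslant 1$ via Fatou, the case $N=1$, and the fact that $\|u\|_{\max}$ is a norm via Corollary~\ref{ann}) is also more careful than the original. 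The only point worth adding is that for $u\neq 0$ one should note that $[u]_i>0$ for every $i$ (if $[u]_i=0$ then \eqref{3.} forces $\rho_i(u)=0$, hence $u(x)=u(y)$ a.e.\ on $Q$ and $u\equiv 0$ since $u=0$ outside $\Omega$), so that the quotient $u/[u]_i$ in your modular--seminorm comparison is always well defined.
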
 
  \begin{proof}[\textbf{Proof}]
    First, we point out that $\|\cdot\|_{\overrightarrow{\varPhi}}$ and $||\cdot||_{max}$ are equivalent, since 
    \begin{equation}\label{eq0}
    N||u||_{max}\geqslant \|u\|_{\overrightarrow{\varPhi}}\geqslant ||u||_{max} \text{ for all } u\in W_0^{\overrightarrow{s}}L_{\overrightarrow{\varPhi}}(Q).
    \end{equation}  
    Next, we remark that
    $$ \displaystyle\int_{Q}\sum_{i=1}^{N}\varPhi_{x,y}^i\left(\dfrac{|u(x)- u(y)|}{||u|||x-y|^{s_i}}\right) \dfrac{dxdy}{|x-y|^N}\leqslant 1.$$   
Using the above relation, we obtain for $i=1,...,N$ :   
         $$ \displaystyle\int_{Q}\varPhi_{x,y}^i\left(\dfrac{|u(x)- u(y)|}{||u|||x-y|^{s_i}}\right) \dfrac{dxdy}{|x-y|^N}\leqslant 1.$$ 
         So, for $i=1,...,N$ we have  $[u]_{i}\leqslant ||u||$, this implies that
         \begin{equation}\label{eq2}
         ||u||_{\overrightarrow{\varPhi}}\leqslant N||u|| \text{ for all } u\in W^sL_\varPhi(\Omega).
         \end{equation}
         On the other hand, by relation $(\ref{3.})$, for $i=1,...,N$ we have 
         $$\varPhi^i_{x,y}(2t)\geqslant 2\varPhi^i_{x,y}(t) \text{ for all } t>0.$$
         Thus, for $i=1,...,N$ we deduce that 
         $$2\varPhi_{x,y}^i\left(\dfrac{|u(x)- u(y)|}{||u||_{max}|x-y|^{s_i}}\right)  \leqslant \varPhi_{x,y}^i\left(\dfrac{|u(x)- u(y)|}{2||u||_{max}|x-y|^{s_i}}\right)  \text{ for all } u\in W_0^{\overrightarrow{s}}L_{\overrightarrow{\varPhi}}(Q),$$ 
                It following that :
          $$
              \begin{aligned} 
   \displaystyle\int_{Q}\sum_{i=1}^{N}\varPhi_{x,y}^i\left(\dfrac{|u(x)-u(y)|}{2||u||_{\max}|x-y|^{s_i}}\right)\dfrac{dxdy}{|x-y|^N}
   & \leqslant\dfrac{1}{2} \left(\displaystyle\int_{Q}\sum_{i=1}^{N}\varPhi_{x,y}^i\left(\dfrac{|u(x)-u(y)|}{||u||_{\max}|x-y|^{s_i}}\right)\dfrac{dxdy}{|x-y|^N}\right) \\
   & \leqslant \dfrac{1}{2} \left(\displaystyle\int_{Q}\sum_{i=1}^{N}\varPhi_{x,y}^i\left(\dfrac{|u(x)-u(y)|}{[u]_i|x-y|^{s_i}}\right)\dfrac{dxdy}{|x-y|^N}  \right) \\
   &\leqslant  \dfrac{N}{2}.
          \end{aligned} 
              $$ 
              This last, implies that 
              $$ ||u||\leqslant N||u||_{max}$$
         Then, by the above relation and $(\ref{eq0})$, we have
         \begin{equation}\label{eq3}
         ||u||\leqslant N||u||_{max}\leqslant N||u||_{\overrightarrow{\varPhi}}.
         \end{equation}
         By relations $(\ref{eq0})$, $(\ref{eq2})$ and $(\ref{eq3})$, we deduce that Proposition $\ref{norm}$ hold true.       \end{proof}
           \begin{prop}\label{mod}
                  Assume that \hyperref[v1]{$(\varPhi_1)$} is satisfied. Then, for any $u \in W_0^{\overrightarrow{s}}L_{\overrightarrow{\varPhi}}(Q)$, the following relations hold true:
                    \begin{equation}\label{mod1}
              ||u||>1\Longrightarrow      ||u||^{\varphi_{\min}^-} \leqslant  \varPsi(u)\leqslant  ||u||^{\varphi_{\max}^+},
                    \end{equation}
                    \begin{equation}\label{mod2}
                         ||u||<1\Longrightarrow    ||u||^{\varphi_{\max}^+} \leqslant  \varPsi(u)\leqslant  ||u||^{\varphi_{\min}^-}. \end{equation}
                    \end{prop}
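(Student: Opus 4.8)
The plan is to reduce the statement to the one‑dimensional homogeneity estimates of Lemma~\ref{2.2..} applied under the integral sign, after first pinning down the value of the modular at the norm. For $\lambda>0$ write
$$
\Psi(u;\lambda)=\sum_{i=1}^{N}\int_{Q}\varPhi_{x,y}^i\left(\frac{|u(x)-u(y)|}{\lambda|x-y|^{s_i}}\right)\frac{dxdy}{|x-y|^{N}},
$$
so that $\Psi(u)=\Psi(u;1)$ and $\|u\|=\inf\{\lambda>0:\Psi(u;\lambda)\leqslant 1\}$. If $u=0$ both sides of \eqref{mod1}--\eqref{mod2} vanish, so assume $u\neq 0$ and set $\beta:=\|u\|\in(0,\infty)$. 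The first, and only delicate, step is to show that the infimum defining $\beta$ is attained with $\Psi(u;\beta)=1$: since each $\varPhi^i_{x,y}$ is increasing, $\lambda\mapsto\Psi(u;\lambda)$ is non‑increasing; since $\varPhi^i_{x,y}\in\Delta_2$ (relation \eqref{r1}), one checks via \eqref{r10} with $\sigma=\lambda_0/\lambda>1$ that $\Psi(u;\lambda)<\infty$ for \emph{every} $\lambda>0$ once it is finite for one $\lambda_0$, and monotone convergence then yields continuity of $\lambda\mapsto\Psi(u;\lambda)$; hence $\Psi(u;\beta)=1$. This is the standard modular/Luxemburg‑norm correspondence in Musielak spaces satisfying $\Delta_2$.

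Suppose first $\|u\|>1$, so $\beta>1$. For fixed $x,y$ and each $i$ put $t_i=|u(x)-u(y)|/(\beta|x-y|^{s_i})$, so that the integrand of $\Psi(u)$ equals $\varPhi^i_{x,y}(\beta t_i)$ and that of $\Psi(u;\beta)$ equals $\varPhi^i_{x,y}(t_i)$. Applying \eqref{3.} gives $\varPhi^i_{x,y}(\beta t_i)\geqslant\beta^{\varphi_i^-}\varPhi^i_{x,y}(t_i)\geqslant\beta^{\varphi_{\min}^-}\varPhi^i_{x,y}(t_i)$, using $\beta>1$ and $\varphi_i^-\geqslant\varphi_{\min}^-$; summing over $i$, integrating over $Q$ against $dxdy/|x-y|^N$, and invoking $\Psi(u;\beta)=1$ yields $\Psi(u)\geqslant\|u\|^{\varphi_{\min}^-}$. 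Symmetrically, \eqref{r10} gives $\varPhi^i_{x,y}(\beta t_i)\leqslant\beta^{\varphi_i^+}\varPhi^i_{x,y}(t_i)\leqslant\beta^{\varphi_{\max}^+}\varPhi^i_{x,y}(t_i)$, whence $\Psi(u)\leqslant\|u\|^{\varphi_{\max}^+}$. This establishes \eqref{mod1}.

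Now suppose $\|u\|<1$, so $\beta\in(0,1)$; with the same notation $t_i$ one has $\beta t_i=|u(x)-u(y)|/|x-y|^{s_i}$. For the lower bound, \eqref{3.2} with $\sigma=\beta$ gives $\varPhi^i_{x,y}(\beta t_i)\geqslant\beta^{\varphi_i^+}\varPhi^i_{x,y}(t_i)\geqslant\beta^{\varphi_{\max}^+}\varPhi^i_{x,y}(t_i)$ (here $\beta<1$ and $\varphi_i^+\leqslant\varphi_{\max}^+$), so summing and using $\Psi(u;\beta)=1$ gives $\Psi(u)\geqslant\|u\|^{\varphi_{\max}^+}$. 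For the upper bound, \eqref{r11} with $\sigma=\beta$ applied to $t=\beta t_i$ (so that $t/\sigma=t_i$) gives $\varPhi^i_{x,y}(\beta t_i)\leqslant\beta^{\varphi_i^-}\varPhi^i_{x,y}(t_i)\leqslant\beta^{\varphi_{\min}^-}\varPhi^i_{x,y}(t_i)$, hence $\Psi(u)\leqslant\|u\|^{\varphi_{\min}^-}$, which is \eqref{mod2}.

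The only genuine obstacle is the normalization $\Psi(u;\|u\|)=1$, which rests entirely on the $\Delta_2$‑condition \eqref{r1} to guarantee both finiteness and continuity of $\lambda\mapsto\Psi(u;\lambda)$; once this is in hand, the remainder is just careful bookkeeping of which of $\varphi_i^-,\varphi_i^+$ dominates $\varphi_{\min}^-$ or $\varphi_{\max}^+$ according to the sign of $\beta-1$, in complete analogy with the scalar variable‑exponent estimates recorded in Proposition~\ref{anproop5}.
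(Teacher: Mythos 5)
Your proof is correct, and it rests on the same homogeneity estimates of Lemma~\ref{2.2..} as the paper's, but the way you handle the Luxemburg norm is genuinely different. You first establish the exact normalization $\Psi(u;\|u\|)=1$ (via finiteness of $\Psi(u;\lambda)$ for every $\lambda>0$, which follows from \eqref{r10} and the $\Delta_2$-condition, plus continuity of $\lambda\mapsto\Psi(u;\lambda)$) and then apply \eqref{3.}, \eqref{3.2}, \eqref{r10}, \eqref{r11} directly at $\lambda=\|u\|$. The paper instead avoids proving this equality: for the upper bounds it uses only $\Psi(u;\|u\|)\leqslant 1$, and for the lower bounds it picks $\beta$ strictly between $1$ and $\|u\|$ (resp.\ $\xi\in(0,\|u\|)$), exploits that $\Psi(u;\beta)>1$ because $\beta$ lies below the infimum, and lets $\beta\nearrow\|u\|$; for the lower bound of \eqref{mod2} it additionally introduces the rescaled function $v=u/\xi$ with $\|v\|>1$ and invokes the already-proved \eqref{mod1}. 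Your route is more unified and arguably cleaner, at the cost of the extra analytic step (continuity of the modular in $\lambda$); one small caution there: for $\lambda_n\nearrow\lambda$ the integrands decrease, so plain monotone convergence does not apply — you need the dominated (or ``monotone from above'') version, which is legitimate precisely because you have already shown $\Psi(u;\lambda')<\infty$ for some $\lambda'<\lambda$. With that caveat noted, the bookkeeping of which of $\varphi_i^{\pm}$ is replaced by $\varphi^-_{\min}$ or $\varphi^+_{\max}$ according to the sign of $\|u\|-1$ is carried out correctly in all four cases.
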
 
                             \begin{proof} 
                       First, we show that if $||u||>1$, then $\varPsi(u) \leqslant ||u||^{\varphi_{\max}^+}$. Indeed, let $u\in W_0^{\overrightarrow{s}}L_{\overrightarrow{\varPhi}}(Q)$ such that $||u||>1$. Using the definition of the Luxemburg norm and the relation $(\ref{r10})$, we get 
                                          \begingroup\makeatletter\def\f@size{9}\check@mathfonts  $$
                                              \begin{aligned}
     \Psi(u)&=\displaystyle\int_{Q}\sum_{i=1}^{N}\varPhi_{x,y}^i\left(\dfrac{||u|||u(x)- u(y)|}{||u|||x-y|^{s_i}}\right) \dfrac{dxdy}{|x-y|^N}\\
     &\leqslant \displaystyle\sum_{i=1}^{N}||u||^{\varphi_i^+}\int_{Q}\varPhi_{x,y}^i\left(\dfrac{|u(x)- u(y)|}{||u|||x-y|^{s_i}}\right) \dfrac{dxdy}{|x-y|^N}\\
                      &\leqslant ||u||^{\varphi_{\max}^+}\displaystyle\sum_{i=1}^{N}\int_{Q}\varPhi_{x,y}^i\left(\dfrac{|u(x)- u(y)|}{||u|||x-y|^{s_i}}\right) \dfrac{dxdy}{|x-y|^N}\\
                         &\leqslant ||u||^{\varphi_{\max}^+}.
                                              \end{aligned}
                                              $$\endgroup
                
                         Next, assume that $||u||>1$. Let $\beta\in (1,||u||)$, by  $(\ref{3.})$, we have
               \begingroup\makeatletter\def\f@size{9}\check@mathfonts   $$
               \begin{aligned}
      \displaystyle\int_{Q}\sum_{i=1}^{N}\varPhi_{x,y}^i\left(\dfrac{|u(x)- u(y)|}{|x-y|^{s_i}}\right) \dfrac{dxdy}{|x-y|^N}
             &  \geqslant \displaystyle\sum_{i=1}^{N}\beta^{\varphi_i^-}\int_{Q}\varPhi_{x,y}^i\left(\dfrac{|u(x)- u(y)|}{\beta|x-y|^{s_i}}\right) \dfrac{dxdy}{|x-y|^N}\\
               & \geqslant \beta^{\varphi_{\min}^-}\displaystyle\sum_{i=1}^{N}\int_{Q}\varPhi_{x,y}^i\left(\dfrac{|u(x)- u(y)|}{\beta|x-y|^{s_i}}\right) \dfrac{dxdy}{|x-y|^N}.
               \end{aligned}   $$\endgroup
               Since $\beta< ||u||$, we find 
            $$ \displaystyle\sum_{i=1}^{N}\int_{Q}\varPhi_{x,y}^i\left(\dfrac{|u(x)- u(y)|}{\beta|x-y|^{s_i}}\right) \dfrac{dxdy}{|x-y|^N}>1.$$   
            Thus, we have
           $$ \displaystyle\sum_{i=1}^{N}\int_{Q}\varPhi_{x,y}^i\left(\dfrac{|u(x)- u(y)|}{|x-y|^{s_i}}\right) \dfrac{dxdy}{|x-y|^N} \geqslant  \beta^{\varphi_{\min}^-}.$$
           Letting $\beta \nearrow ||u||$, we deduce that $(\ref{mod1})$ holds  true.
           
             Next, we show that $\varPsi(u) \leqslant ||u||^{\varphi_{\min}^-} \text{   for all  }  u\in W_0^{\overrightarrow{s}}L_{\overrightarrow{\varPhi}}(Q) \text{ with }||u||<1$. 
                            Using the definition of the Luxemburg norm and $(\ref{r11})$, we obtain
                                 \begingroup\makeatletter\def\f@size{10}\check@mathfonts $$
                               \begin{aligned}
    \Psi(u)&\leqslant \sum_{i=1}^{N}||u||^{\varphi_i^-}\int_{Q}\varPhi_{x,y}^i\left(\dfrac{|u(x)- u(y)|}{||u|||x-y|^{s_i}}\right) \dfrac{dxdy}{|x-y|^N}\\
            &\leqslant ||u||^{\varphi_{\min}^-}\sum_{i=1}^{N}\int_{Q}\varPhi_{x,y}^i\left(\dfrac{|u(x)- u(y)|}{||u|||x-y|^{s_i}}\right) \dfrac{dxdy}{|x-y|^N}\\
                               &\leqslant||u||^{\varphi_{\min}^-}.
                               \end{aligned}
                               $$\endgroup

           Let $\xi\in (0,||u||)$. From $(\ref{3.2})$, it follows that 
           {\small \begin{equation}\label{re1}
                \begin{aligned}
               \Psi(u)&= \displaystyle\int_{Q}\sum_{i=1}^{N}\varPhi_{x,y}^i\left(\dfrac{|u(x)- u(y)|}{|x-y|^{s_i}}\right) \dfrac{dxdy}{|x-y|^N}\\
                & \geqslant   \displaystyle\sum_{i=1}^{N}\int_{Q}\xi^{\varphi_i^+}\varPhi_{x,y}^i\left(\dfrac{|u(x)- u(y)|}{\xi|x-y|^{s_i}}\right) \dfrac{dxdy}{|x-y|^N}\\
                 & \geqslant \xi^{\varphi_{\max}^+}\displaystyle\sum_{i=1}^{N}\int_{Q}\varPhi_{x,y}^i\left(\dfrac{|u(x)- u(y)|}{\xi|x-y|^{s_i}}\right) \dfrac{dxdy}{|x-y|^N} .
                \end{aligned}    \end{equation} }
                Defining     $v(x)=\dfrac{u(x)}{\xi}$ for all $x\in \Omega$. Then, $||v||=\dfrac{||u||}{\xi}>1$. Using relation $(\ref{mod1})$, we find 
                 \begin{equation}\label{re2}
                 \displaystyle\int_{Q}\sum_{i=1}^{N}\varPhi_{x,y}^i\left(\dfrac{|v(x)- v(y)|}{|x-y|^{s_i}}\right) \dfrac{dxdy}{|x-y|^N}\geqslant ||v||^{\varphi_{\min}^-}>1. \end{equation}
                 Combining $(\ref{re1})$ and $(\ref{re2})$, we deduce that
                 $$\displaystyle\int_{Q}\sum_{i=1}^{N}\varPhi_{x,y}^i\left(\dfrac{|v(x)- v(y)|}{|x-y|^{s_i}}\right) \dfrac{dxdy}{|x-y|^N}\geqslant \xi^{\varphi_{\max}^+}.$$
               Letting $\xi\nearrow ||u||$ in the above inequality, we obtain that relation $(\ref{mod2})$ holds true.
         \end{proof}
   Finally, the proof of our main results is based on the following  mountain pass theorem and Ekeland's variational principle theorem.
   \begin{thm}\cite{110}\label{an2.2}
         Let $X$ be a real Banach space and $I \in C^1(X,\R)$ with $I(0)=0$. Suppose that the following conditions hold:
         
         $(G_1)$ \label{G1} There exist
         $\rho>0 \text{  and } r>0 \text{ such that } I(u)\geqslant r \text{ for } ||u||=\rho$.
          
         $(G_2)$ \label{G2} There exists
         $e \in X \text{ with } ||e||>\rho \text{  such that } I(e)\leqslant 0$.\\
         Let
         $$c:=\inf_{\gamma \in \Gamma} \max_{t\in [0,1]}I (\gamma (t)) \text{ with } \Gamma=\left\lbrace \gamma \in C([0,1],X); \gamma(0)=0 , \gamma(1)=e \right\rbrace.$$
         Then there exists a sequence $\left\lbrace u_n\right\rbrace $ in $X$ such that 
         $$I(u_n)\rightarrow c \text{ \hspace{0.2cm} \text{and} \hspace{0.2cm}} I'(u_n)\rightarrow 0.$$
        
         \end{thm}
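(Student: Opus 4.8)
The statement is the classical mountain pass theorem of Ambrosetti and Rabinowitz, in the formulation (essentially due to Brézis–Nirenberg and to Ekeland) that dispenses with any compactness hypothesis and only asserts the existence of a Palais–Smale sequence at the min-max level $c$. The plan is to derive it from Ekeland's variational principle applied on the space of admissible paths, rather than from the quantitative deformation lemma, since Ekeland's principle is already part of the paper's toolbox. First I would check that $c$ is finite and strictly positive: the path $\gamma(t)=te$ lies in $\Gamma$, so $c\leqslant\max_{t\in[0,1]}I(te)<\infty$; and for any $\gamma\in\Gamma$, since $\|\gamma(0)\|=0<\rho<\|e\|=\|\gamma(1)\|$, continuity of $t\mapsto\|\gamma(t)\|$ produces $t_0$ with $\|\gamma(t_0)\|=\rho$, whence by $(G_1)$ one gets $\max_t I(\gamma(t))\geqslant I(\gamma(t_0))\geqslant r$; taking the infimum over $\Gamma$ yields $c\geqslant r>0$, so in particular $c>\max\{I(0),I(e)\}$ and the endpoints sit strictly below the passes.

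Next I would endow $\Gamma$ with the uniform metric $d(\gamma_1,\gamma_2)=\max_{t\in[0,1]}\|\gamma_1(t)-\gamma_2(t)\|_X$. Since $X$ is complete and the constraints $\gamma(0)=0$, $\gamma(1)=e$ pass to uniform limits, $(\Gamma,d)$ is a complete metric space; and since $I$ is continuous (hence uniformly continuous on the compact, bounded sets $\gamma([0,1])$), the functional $\mathcal{J}(\gamma):=\max_{t\in[0,1]}I(\gamma(t))$ is continuous and bounded below by $r$ on $(\Gamma,d)$, with $\inf_\Gamma\mathcal{J}=c$. Fixing $\varepsilon\in(0,1)$, choosing $\gamma_\varepsilon\in\Gamma$ with $\mathcal{J}(\gamma_\varepsilon)\leqslant c+\varepsilon$, and applying Ekeland's variational principle with parameter $\sqrt\varepsilon$, one obtains $\tilde\gamma_\varepsilon\in\Gamma$ with $\mathcal{J}(\tilde\gamma_\varepsilon)\leqslant c+\varepsilon$, $d(\tilde\gamma_\varepsilon,\gamma_\varepsilon)\leqslant\sqrt\varepsilon$, and $\mathcal{J}(\sigma)\geqslant\mathcal{J}(\tilde\gamma_\varepsilon)-\sqrt\varepsilon\,d(\sigma,\tilde\gamma_\varepsilon)$ for every $\sigma\in\Gamma$.

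The decisive step is to show that the maximum set $M_\varepsilon:=\{t\in[0,1]:I(\tilde\gamma_\varepsilon(t))=\mathcal{J}(\tilde\gamma_\varepsilon)\}$ contains a point $t_\varepsilon$ with $\|I'(\tilde\gamma_\varepsilon(t_\varepsilon))\|_{X^*}\leqslant C\sqrt\varepsilon$ for a universal constant $C$. Arguing by contradiction, if $\|I'\|>C\sqrt\varepsilon$ at every point of $M_\varepsilon$, then using a locally Lipschitz pseudo-gradient vector field $V$ for $I$ (available because $I\in C^1$) near $\tilde\gamma_\varepsilon(M_\varepsilon)$, together with a cut-off in the parameter $t$ that vanishes near $t=0$ and $t=1$ — legitimate because for $t$ close to the endpoints $I(\tilde\gamma_\varepsilon(t))$ is close to $\{I(0),I(e)\}$, strictly below $c\leqslant\mathcal{J}(\tilde\gamma_\varepsilon)$, so such $t\notin M_\varepsilon$ — one builds a competitor $\sigma=\tilde\gamma_\varepsilon-\delta\,V(\tilde\gamma_\varepsilon(\cdot))\,(\mathrm{cut\text{-}off})\in\Gamma$ for small $\delta>0$ that lowers $\max_t I(\sigma(t))$ strictly below $\mathcal{J}(\tilde\gamma_\varepsilon)-\sqrt\varepsilon\,d(\sigma,\tilde\gamma_\varepsilon)$, contradicting the Ekeland inequality. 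Finally, setting $\varepsilon=1/n$ and $u_n:=\tilde\gamma_{1/n}(t_{1/n})$ gives $c\leqslant I(u_n)\leqslant c+1/n$ and $\|I'(u_n)\|_{X^*}\leqslant C/\sqrt n$, that is $I(u_n)\to c$ and $I'(u_n)\to 0$.

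I expect the main obstacle to be precisely this last construction: turning ``large gradient at every maximizer along the path'' into a genuine decrease of the min-max value. The delicate points are the possibly complicated structure of the maximum set $M_\varepsilon\subset[0,1]$, the patching of the local descent directions into a single locally Lipschitz field via the pseudo-gradient lemma, and the bookkeeping needed to ensure the resulting decrease strictly beats the Ekeland slack $\sqrt\varepsilon\,d(\sigma,\tilde\gamma_\varepsilon)$ while keeping $\sigma(0)=0$ and $\sigma(1)=e$. Everything else — nonemptiness and positivity of $c$, completeness of $(\Gamma,d)$, continuity of $\mathcal{J}$, and the concluding diagonal extraction — is routine.
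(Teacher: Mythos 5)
The paper does not prove this statement at all: it is quoted verbatim as a known result, with a citation to Struwe's book, and is used later only as a black box (via Lemmas \ref{anlem1} and \ref{anlem2}) to produce a Palais--Smale sequence for $T_\lambda$. So there is no in-paper argument to compare yours against. That said, your proposal is a correct outline of the standard Ekeland-based proof of this compactness-free mountain pass theorem (essentially the Br\'ezis--Nirenberg / Aubin--Ekeland route): the positivity of $c$ via the intermediate value theorem on $t\mapsto\|\gamma(t)\|$, the completeness of $(\Gamma,d)$ and continuity of $\mathcal{J}$, and the application of Ekeland's principle with parameter $\sqrt{\varepsilon}$ are all right, and you correctly identify the endpoint cut-off as legitimate because $c\geqslant r>0\geqslant\max\{I(0),I(e)\}$. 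The one step you leave at sketch level is the decisive one: converting ``$\|I'\|>C\sqrt{\varepsilon}$ on the whole maximum set $M_\varepsilon$'' into a competitor path that beats the Ekeland inequality. As written, your perturbation $\sigma=\tilde\gamma_\varepsilon-\delta V\cdot(\text{cut-off})$ needs the pseudo-gradient field $V$ normalized (say $\|V\|\leqslant 1$, $\langle I',V\rangle\geqslant\tfrac12\|I'\|$) so that the first-order decrease $\tfrac{\delta}{2}C\sqrt{\varepsilon}$ dominates the slack $\sqrt{\varepsilon}\,d(\sigma,\tilde\gamma_\varepsilon)\leqslant\sqrt{\varepsilon}\,\delta$ (forcing $C>2$), and one must check that for small $\delta$ the new maximum is attained in a neighborhood of $M_\varepsilon$ where the gradient bound persists by continuity. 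You flag exactly these points yourself, and they are all handled by the classical argument, so the proposal is sound; it simply supplies a proof the paper chose to import by reference.
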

    \begin{thm}\label{anek}(see : \cite{ek})
    Let V be a complete metric space and $F : V \longrightarrow \R\cup \left\lbrace +\infty\right\rbrace$ be a lower semicontinuous functional on $V$, that is bounded below and not identically equal to $+\infty$. Fix $\varepsilon>0$ and a  point $u\in V$ 
      such that
     $$F(u)\leqslant \varepsilon +\inf\limits_{x\in V}F(x).$$ Then for every $\gamma > 0$,
      there exists some point $v\in V$ such that :
      $$F(v)\leqslant F(u),$$
      $$d(u,v)\leqslant \gamma$$
      and for all $w\neq v$
      $$F(w)> F(v)-\dfrac{\varepsilon}{\gamma}d(v,w).$$
    \end{thm}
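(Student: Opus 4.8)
The plan is to derive all three conclusions from the existence of a single $\preceq$-minimal element of $V$, where $\preceq$ is an order built from $F$ and the metric; this is the classical Ekeland argument. Put $\lambda:=\varepsilon/\gamma$ and define, for $z,w\in V$,
$$z\preceq w \iff F(z)+\lambda\, d(z,w)\leqslant F(w).$$
First I would check that $\preceq$ is a partial order on $V$: reflexivity is trivial; antisymmetry follows because $z\preceq w$ and $w\preceq z$ together force $\lambda\, d(z,w)\leqslant 0$; transitivity is the triangle inequality applied to the two defining inequalities. Note also that $z\preceq w$ forces $F(z)\leqslant F(w)$, so along any $\preceq$-chain the values of $F$ are nonincreasing.

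Next I would build, starting from $u_0:=u$, a $\preceq$-decreasing sequence $u_0\succeq u_1\succeq u_2\succeq\cdots$ by choosing at each step some $u_{n+1}\preceq u_n$ with
$$F(u_{n+1})\leqslant \inf\{\,F(z):z\preceq u_n\,\}+2^{-(n+1)};$$
this is possible since $\{z:z\preceq u_n\}$ contains $u_n$ and $F$ is bounded below. For $m\geqslant n$ transitivity gives $u_m\preceq u_n$, hence $\lambda\, d(u_m,u_n)\leqslant F(u_n)-F(u_m)$; since $(F(u_n))_n$ is nonincreasing and bounded below it converges, so the right-hand side tends to $0$ and $(u_n)$ is Cauchy. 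By completeness $u_n\to v$ for some $v\in V$. Fixing $n$ and letting $m\to\infty$ in $F(u_m)+\lambda\, d(u_m,u_n)\leqslant F(u_n)$, using continuity of $d$ and lower semicontinuity of $F$, yields $v\preceq u_n$ for every $n$. In particular $v\preceq u_0=u$, so $F(v)\leqslant F(u)$ and $\lambda\, d(u,v)\leqslant F(u)-F(v)\leqslant F(u)-\inf_V F\leqslant\varepsilon$, that is $d(u,v)\leqslant\gamma$; this settles the first two assertions.

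For the last assertion I would prove that $v$ is $\preceq$-minimal, i.e. $w\preceq v\Rightarrow w=v$. If $w\preceq v$ then $w\preceq u_n$ for all $n$, whence $F(w)\geqslant\inf\{\,F(z):z\preceq u_n\,\}\geqslant F(u_{n+1})-2^{-(n+1)}$; letting $n\to\infty$ gives $F(w)\geqslant\lim_n F(u_n)\geqslant F(v)$ (the last inequality again from lower semicontinuity), while $w\preceq v$ gives $\lambda\, d(w,v)\leqslant F(v)-F(w)\leqslant 0$, so $w=v$. Therefore, for any $w\neq v$ the relation $w\preceq v$ fails, i.e. $F(w)+\lambda\, d(w,v)>F(v)$, which is precisely $F(w)>F(v)-\frac{\varepsilon}{\gamma}d(v,w)$.

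The only genuinely delicate point is the limit passage: one must apply lower semicontinuity of $F$ and continuity of $d$ in the correct directions so that $F(v)+\lambda\, d(v,u_n)\leqslant\liminf_m\big(F(u_m)+\lambda\, d(u_m,u_n)\big)\leqslant F(u_n)$, and one must observe that iterating the single-step relations $u_{k+1}\preceq u_k$ is what makes the limit comparable to every term of the chain. Everything else is bookkeeping with the triangle inequality and a convergent geometric series.
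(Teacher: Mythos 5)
The paper does not prove this statement at all: it is quoted as a known result and attributed directly to Ekeland's 1974 article \cite{ek}, so there is no in-paper argument to compare against. Your proof is the classical one — the partial order $z\preceq w \iff F(z)+\frac{\varepsilon}{\gamma}d(z,w)\leqslant F(w)$, the almost-minimizing chain $u_{n+1}\preceq u_n$ with error $2^{-(n+1)}$, the Cauchy/completeness step, and the identification of the limit as a $\preceq$-minimal point — and it is correct and complete. All three conclusions are derived properly: $v\preceq u$ gives $F(v)\leqslant F(u)$ and $\frac{\varepsilon}{\gamma}d(u,v)\leqslant F(u)-F(v)\leqslant\varepsilon$, hence $d(u,v)\leqslant\gamma$; and minimality of $v$ is exactly the negation of $w\preceq v$ for $w\neq v$, which is the strict inequality asserted. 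The one hypothesis you use implicitly but could flag is that $F(u)<+\infty$, which follows from $F(u)\leqslant\varepsilon+\inf_V F$ together with $F$ being bounded below and not identically $+\infty$; without this the subtraction $F(u)-F(v)$ and the finiteness of the infima over the sets $\{z:z\preceq u_n\}$ would be problematic. Your handling of the lower semicontinuity in the limit passage (taking $\liminf$ of $F(u_m)+\lambda d(u_m,u_n)$ for fixed $n$) is exactly right and is indeed the only delicate point.
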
     
   \section{Main results and proofs}
   To simplify the notation, we ask
         $$D^{s_i}u:=\dfrac{u(x)-u(y)}{|x-y|^{s_i}}~~ \text{ and }~~ d\mu=\dfrac{dxdy}{|x-y|^N} ~~ \forall (x,y)\in Q. $$ 
         
 The dual space of $\left(W_0^{\overrightarrow{s}}L_{\overrightarrow{\varPhi}}(Q)\right) $  is denoted by $\left((W_0^{\overrightarrow{s}}L_{\overrightarrow{\varPhi}}(Q))^*,||.||_{\overrightarrow{\varPhi},*}\right) $.

 \begin{defini}
 We say that $\lambda\in \R$ is an eigenvalue of Problem \hyperref[P]{$(P_a)$} if there exists $u\in W_0^{\overrightarrow{s}}L_{\overrightarrow{\varPhi}}(Q)\setminus \left\lbrace 0\right\rbrace$ such that 
 $$M\left(\Psi(u)\right)\int_{Q} \sum_{i=1}^{N}a_{x,y}^i(|D^{s_i}u|)  D^{s_i}u D^{s_i}vd\mu-\lambda\int_{\Omega}|u|^{q(x)-2}uvdx=0$$
 for all $v\in W_0^{\overrightarrow{s}}L_{\overrightarrow{\varPhi}}(Q)$.
 \end{defini}
 
  We point that if $\lambda$ is an eigenvalue of Problem \hyperref[P]{$(P_a)$} then the corresponding $u\in W_0^{\overrightarrow{s}}L_{\overrightarrow{\varPhi}}(Q)\setminus\left\lbrace 0\right\rbrace $ is a weak solution of \hyperref[P]{$(P_a)$}.\\

  The main results in this paper are given by the following theorems.
  \begin{thm}\label{anth1}
  Assume that the function $q\in C(\overline{\Omega})$ verifies the hypothesis
  \begin{equation}
  \varphi^+_{\max}<q^-.
  \end{equation}
  Then for all $\lambda>0$ is an eigenvalue of Problem \hyperref[P]{$(P_a)$}.
  \end{thm}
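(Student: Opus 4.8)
The plan is to realize nontrivial weak solutions of $(P_a)$ as critical points of the associated energy functional and, for every $\lambda>0$, to produce one via the mountain pass theorem (Theorem~\ref{an2.2}). First I would introduce $J_\lambda:W_0^{\overrightarrow{s}}L_{\overrightarrow{\varPhi}}(Q)\to\R$ defined by
\[
J_\lambda(u)=\widehat{M}\bigl(\Psi(u)\bigr)-\lambda\int_\Omega\frac{1}{q(x)}|u(x)|^{q(x)}\,dx,\qquad\text{where }\ \widehat{M}(t)=\int_0^t M(\tau)\,d\tau,
\]
and verify that $J_\lambda\in C^1(W_0^{\overrightarrow{s}}L_{\overrightarrow{\varPhi}}(Q),\R)$ with
\[
\langle J_\lambda'(u),v\rangle=M\bigl(\Psi(u)\bigr)\int_Q\sum_{i=1}^N a^i_{x,y}\bigl(|D^{s_i}u|\bigr)\,D^{s_i}u\,D^{s_i}v\,d\mu-\lambda\int_\Omega|u|^{q(x)-2}uv\,dx,
\]
using the $\Delta_2$-conditions $(\ref{r1})$ and $(\ref{rr1})$ to differentiate the modular under the integral sign and the continuity of $M$. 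Then $J_\lambda(0)=0$ and, by the Definition above, the critical points of $J_\lambda$ are exactly the weak solutions of $(P_a)$, so it suffices to produce a nonzero one.

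I would then check the two geometric conditions. For $(G_1)$, taking $\|u\|=\rho<1$, Proposition~\ref{mod} gives $\Psi(u)\ge\|u\|^{\varphi_{\max}^+}$, hence $\widehat{M}(\Psi(u))\ge c_0\|u\|^{\varphi_{\max}^+}$ by the hypotheses on $M$, while the compact embedding $W_0^{\overrightarrow{s}}L_{\overrightarrow{\varPhi}}(Q)\hookrightarrow L^{q(x)}(\Omega)$ of Theorem~\ref{anth5} combined with Proposition~\ref{anproop5} yields $\int_\Omega\frac{1}{q(x)}|u|^{q(x)}\,dx\le C\|u\|^{q^-}$ for $\|u\|$ small. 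Since $q^->\varphi_{\max}^+$, the first term dominates and $J_\lambda(u)\ge r>0$ on the sphere $\|u\|=\rho$ for $\rho$ small enough. For $(G_2)$, I would fix $w\in W_0^{\overrightarrow{s}}L_{\overrightarrow{\varPhi}}(Q)\setminus\{0\}$ and use $(\ref{r10})$ to estimate $\Psi(tw)\le t^{\varphi_{\max}^+}\Psi(w)$ for $t>1$; together with the growth hypotheses on $M$ this controls $\widehat{M}(\Psi(tw))$ by a power $t^{\varphi_{\max}^+}$, whereas $\int_\Omega\frac{1}{q(x)}|tw|^{q(x)}\,dx\ge\frac{t^{q^-}}{q^+}\int_\Omega|w|^{q(x)}\,dx$, so $q^->\varphi_{\max}^+$ forces $J_\lambda(tw)\to-\infty$ and $e:=t_0w$ works for $t_0$ large. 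Theorem~\ref{an2.2} then supplies a sequence $\{u_n\}$ with $J_\lambda(u_n)\to c>0$ and $J_\lambda'(u_n)\to0$.

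It then remains to pass to a critical point. For boundedness of $\{u_n\}$ I would use the relation $J_\lambda(u_n)-\frac{1}{q^-}\langle J_\lambda'(u_n),u_n\rangle\le c+o(1)+o(1)\|u_n\|$ together with the two-sided bound $\varphi_{\min}^-\Psi(u)\le\int_Q\sum_i a^i_{x,y}(|D^{s_i}u|)|D^{s_i}u|^2\,d\mu\le\varphi_{\max}^+\Psi(u)$ coming from \hyperref[v1]{$(\varPhi_1)$}, the inequality $\frac{1}{q(x)}\le\frac{1}{q^-}$, and the hypotheses on $M$; since $q^->\varphi_{\max}^+$ this bounds $\widehat{M}(\Psi(u_n))$, hence $\Psi(u_n)$, hence $\|u_n\|$ via Proposition~\ref{mod}. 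By reflexivity of $W_0^{\overrightarrow{s}}L_{\overrightarrow{\varPhi}}(Q)$ we may assume $u_n\rightharpoonup u$, and Theorem~\ref{anth5} gives $u_n\to u$ in $L^{q(x)}(\Omega)$, so $\int_\Omega|u_n|^{q(x)-2}u_n(u_n-u)\,dx\to0$. Inserting this and $\langle J_\lambda'(u_n),u_n-u\rangle\to0$ into the formula for $J_\lambda'$ yields $M(\Psi(u_n))\langle\Psi'(u_n),u_n-u\rangle\to0$; as $\Psi(u_n)$ stays bounded away from $0$ (otherwise $u_n\to0$ and $c=0$) and $M$ is bounded below there by a positive constant, we get $\langle\Psi'(u_n),u_n-u\rangle\to0$, and the $(S_+)$-type property of the operator $\Psi'$ — which follows from \hyperref[f2.]{$(\varPhi_2)$} and the uniform convexity of the space — forces $u_n\to u$ strongly. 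Passing to the limit gives $J_\lambda'(u)=0$, and $J_\lambda(u)=c>0=J_\lambda(0)$ shows $u\ne0$; hence $u$ is a nontrivial weak solution and $\lambda$ is an eigenvalue of $(P_a)$, for every $\lambda>0$.

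I expect the hard part to be the verification of the Palais--Smale condition: keeping the Kirchhoff factor $M(\Psi(u_n))$ under control when proving boundedness from the single growth relation $\varphi_{\max}^+<q^-$, and, above all, establishing the $(S_+)$-type property of the nonlocal anisotropic Musielak operator $\Psi'$ over the set $Q$. The latter requires combining the convexity encoded in \hyperref[f2.]{$(\varPhi_2)$} with a Brezis--Lieb / uniform-convexity argument for the modular $\Psi$, carried out simultaneously over the $N$ directions and over $Q=\R^{2N}\setminus(C\Omega\times C\Omega)$ rather than over $\Omega\times\Omega$.
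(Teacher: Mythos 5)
Your proposal follows essentially the same route as the paper: the same energy functional, the mountain pass geometry obtained from Proposition~\ref{mod} and the embedding of Theorem~\ref{anth5} under $\varphi^+_{\max}<q^-$, boundedness of the Palais--Smale sequence via $T_\lambda(u_n)-\frac{1}{q^-}\langle T_\lambda'(u_n),u_n\rangle$, the dichotomy on $\lim\Psi(u_n)$ to control the Kirchhoff factor, and strong convergence through the $(S_+)$-type property of $\Psi'$ derived from convexity. The argument is correct and matches the paper's proof in all essential steps.
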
  
   \begin{thm}\label{anth2}
    Assume that the function $q\in C(\overline{\Omega})$ verifies the hypothesis
    \begin{equation}\label{an15}
    q^-<\varphi^-_{\min}.
    \end{equation}
    Then there exists $\lambda_*>0$ such that for any $\lambda\in (0,\lambda_*)$ is an eigenvalue of Problem \hyperref[P]{$(P_a)$}.
    \end{thm}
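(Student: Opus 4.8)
The plan is to produce a nontrivial critical point of the energy functional of $(P_a)$ via a localization argument resting on Ekeland's variational principle (Theorem~\ref{anek}). Set $\widehat M(t)=\int_0^tM(\tau)\,d\tau$ and define on $W_0^{\overrightarrow{s}}L_{\overrightarrow{\varPhi}}(Q)$
\[
J_\lambda(u)=\widehat M\big(\Psi(u)\big)-\lambda\int_\Omega\frac{|u(x)|^{q(x)}}{q(x)}\,dx .
\]
First I would check that $J_\lambda\in C^1\big(W_0^{\overrightarrow{s}}L_{\overrightarrow{\varPhi}}(Q),\R\big)$ with
\[
\langle J_\lambda'(u),v\rangle=M\big(\Psi(u)\big)\int_Q\sum_{i=1}^{N}a^i_{x,y}\big(|D^{s_i}u|\big)D^{s_i}u\,D^{s_i}v\,d\mu-\lambda\int_\Omega|u|^{q(x)-2}uv\,dx ,
\]
which is routine from the $\Delta_2$-conditions $(\ref{r1})$--$(\ref{rr1})$, the growth bounds encoded in $(\varPhi_1)$, and the compact embedding of Theorem~\ref{anth5}; consequently $\lambda$ is an eigenvalue of $(P_a)$ if and only if $J_\lambda$ admits a critical point $u\neq 0$.

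The core of the argument is the geometry of $J_\lambda$ on a fixed small ball $\overline{B_\rho}=\{u:\|u\|\le\rho\}$, where $\rho\in(0,1)$ is chosen so small that $c_1\rho<1$, with $c_1$ the norm of the embedding $W_0^{\overrightarrow{s}}L_{\overrightarrow{\varPhi}}(Q)\hookrightarrow L^{q(x)}(\Omega)$. I would establish three facts. (i) \emph{Lower bound on $\overline{B_\rho}$:} since $\widehat M\ge 0$ and, by Theorem~\ref{anth5} and Proposition~\ref{anproop5}, $\rho_{q(\cdot)}$ is bounded on $\overline{B_\rho}$, one has $J_\lambda(u)\ge-\tfrac{\lambda}{q^-}\sup_{\overline{B_\rho}}\rho_{q(\cdot)}(u)>-\infty$. (ii) \emph{Positivity on $\partial B_\rho$ for small $\lambda$:} using the assumptions on $M$ (in particular $M\ge m_0>0$), $(\ref{mod2})$ and Proposition~\ref{anproop5},
\[
J_\lambda(u)\ge m_0\Psi(u)-\tfrac{\lambda}{q^-}\|u\|_{q(x)}^{q^-}\ge m_0\rho^{\varphi_{\max}^+}-\tfrac{\lambda c_1^{q^-}}{q^-}\rho^{q^-}=\rho^{q^-}\Big(m_0\rho^{\varphi_{\max}^+-q^-}-\tfrac{\lambda c_1^{q^-}}{q^-}\Big),
\]
which is $>0$ once $\lambda<\lambda_*:=\dfrac{m_0 q^-}{c_1^{q^-}}\rho^{\varphi_{\max}^+-q^-}$ (note $\varphi_{\max}^+\ge\varphi_{\min}^->q^-$ by $(\varPhi_1)$ and $(\ref{an15})$). (iii) \emph{$\inf_{\overline{B_\rho}}J_\lambda<0$ for every $\lambda\in(0,\lambda_*)$:} here I exploit the continuity of $q$. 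Fix $\varepsilon>0$ with $q^-+\varepsilon<\varphi_{\min}^-$ and choose $w_0\in C_0^\infty(\Omega)$, $w_0\not\equiv 0$, whose support lies in the nonempty open set $\{x\in\Omega:q(x)<q^-+\varepsilon\}$. For $\delta\in(0,1)$ small, $(\ref{r11})$ gives $\Psi(\delta w_0)\le\delta^{\varphi_{\min}^-}\Psi(w_0)$, while $\int_\Omega\tfrac{|\delta w_0|^{q(x)}}{q(x)}\,dx\ge\tfrac{\delta^{q^-+\varepsilon}}{q^+}\int_\Omega|w_0|^{q(x)}\,dx>0$; hence, with $M_1:=\sup_{[0,\Psi(w_0)]}M<\infty$,
\[
J_\lambda(\delta w_0)\le M_1\Psi(w_0)\,\delta^{\varphi_{\min}^-}-\tfrac{\lambda}{q^+}\Big(\int_\Omega|w_0|^{q(x)}\,dx\Big)\delta^{q^-+\varepsilon}<0
\]
for $\delta$ small enough, because $q^-+\varepsilon<\varphi_{\min}^-$.

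Next I would apply Ekeland's principle on the complete metric space $\overline{B_\rho}$: it yields $(u_n)\subset\overline{B_\rho}$ with $J_\lambda(u_n)\to c:=\inf_{\overline{B_\rho}}J_\lambda<0$ and $J_\lambda(w)\ge J_\lambda(u_n)-\tfrac1n\|w-u_n\|$ for all $w\in\overline{B_\rho}$. Since $J_\lambda(u_n)<0$ whereas $J_\lambda>0$ on $\partial B_\rho$, for large $n$ we have $\|u_n\|<\rho$; testing the Ekeland inequality with $w=u_n\pm tv$ and letting $t\downarrow 0$ gives $\|J_\lambda'(u_n)\|_{\overrightarrow{\varPhi},*}\le 1/n\to 0$. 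By reflexivity of $W_0^{\overrightarrow{s}}L_{\overrightarrow{\varPhi}}(Q)$ a subsequence satisfies $u_n\rightharpoonup u$, and $u_n\to u$ in $L^{q(x)}(\Omega)$ by the compact embedding of Theorem~\ref{anth5}; hence $\lambda\int_\Omega|u_n|^{q(x)-2}u_n(u_n-u)\,dx\to 0$, and from $\langle J_\lambda'(u_n),u_n-u\rangle\to 0$ together with $M(\Psi(u_n))\ge m_0>0$ one gets $\int_Q\sum_i a^i_{x,y}(|D^{s_i}u_n|)D^{s_i}u_n\big(D^{s_i}u_n-D^{s_i}u\big)\,d\mu\to 0$. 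The $(S_+)$-property of the anisotropic operator then forces $u_n\to u$ in $W_0^{\overrightarrow{s}}L_{\overrightarrow{\varPhi}}(Q)$; passing to the limit, $J_\lambda(u)=c<0=J_\lambda(0)$, so $u\neq 0$, and $J_\lambda'(u)=0$. Thus $u$ is a nontrivial weak solution and every $\lambda\in(0,\lambda_*)$ is an eigenvalue of $(P_a)$.

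The main obstacle is the compactness step, i.e. the $(S_+)$-property of the Kirchhoff-weighted anisotropic operator $u\mapsto M(\Psi(u))\sum_{i=1}^{N}(-\Delta)^{s_i}_{a^i_{(x,\cdot)}}u$ in this Musielak modular setting: that $u_n\rightharpoonup u$ with $\limsup\langle A(u_n),u_n-u\rangle\le 0$ implies $u_n\to u$. This requires the strict-monotonicity inequalities for each $\varPhi^i_{x,y}$ (coming from the convexity assumption $(\varPhi_2)$ and the bounds $(\varPhi_1)$), uniform positivity of the Kirchhoff factor $M(\Psi(u_n))$, and a Fatou/Br\'ezis--Lieb type passage to the limit inside the double integral over $Q$. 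Carefully proving $J_\lambda\in C^1$ in this generality, although standard under the $\Delta_2$-conditions, also takes some work.
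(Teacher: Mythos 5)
Your proposal is correct and follows essentially the same route as the paper: the small-ball geometry (positivity of the energy on $\partial B_\rho$ for $\lambda<\lambda_*$, together with a negative infimum produced by a test function supported in the open set where $q(x)<q^-+\varepsilon<\varphi^-_{\min}$), then Ekeland's variational principle on $\overline{B_\rho}$, and finally strong convergence of the resulting Palais--Smale sequence via the compact embedding of Theorem \ref{anth5} and the monotonicity of the anisotropic operator. The $(S_+)$-type compactness step that you flag as the main obstacle is precisely what the paper establishes at the end of the proof of Theorem \ref{anth1} through the convexity inequalities $(\ref{an33})$--$(\ref{an35})$ and the Lamperti-type lemma, and is simply reused here.
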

   
     \subsection{Auxiliary results}
  In order to prove our main results, we introduce the following functionals $J, I, J_1, I_1 : W_0^{\overrightarrow{s}}L_{\overrightarrow{\varPhi}}(Q) \longrightarrow \R$ by

  $J(u)=\widehat{M}\left( \displaystyle\int_{Q}\sum_{i=1}^{N}\varPhi_{x,y}^i\left(|D^{s_i}u| \right)d\mu\right)$

$I(u)=\displaystyle\int_{\Omega}\dfrac{1}{q(x)}|u|^{q(x)}dx$

$J_1(u)=M\left( \displaystyle\int_{Q}\sum_{i=1}^{N}\varPhi_{x,y}^i\left(|D^{s_i}u| \right)d\mu\right)\displaystyle\int_{Q} \sum_{i=1}^{N}a_{x,y}^i(|D^{s_i}u|)|D^{s_i}u|^2d\mu$

$I_1(u)=\displaystyle\int_{\Omega}|u|^{q(x)}dx$.

By a standard argument to  \cite{sr5} and \cite{3}, we have $J,I\in C^1(W_0^{\overrightarrow{s}}L_{\overrightarrow{\varPhi}}(Q),\R)$,
$$\left\langle J'(u),v\right\rangle =M\left( \displaystyle\int_{Q}\sum_{i=1}^{N}\varPhi_{x,y}^i\left(|D^{s_i}u| \right)d\mu\right)\int_{Q} \sum_{i=1}^{N}a_{x,y}^i(|D^{s_i}u|)  D^{s_i}u D^{s_i}vd\mu$$
 and 
$$\left\langle I'(u),v\right\rangle =\displaystyle\int_{\Omega}|u|^{q(x)-2}uvdx,$$
for all $u,v\in W_0^{\overrightarrow{s}}L_{\overrightarrow{\varPhi}}(Q)$.\\

Next, for each $\lambda\in \R$, we define the energetic function associated with Problem \hyperref[P]{$(P_a)$}, 
 $T_\lambda : W_0^{\overrightarrow{s}}L_{\overrightarrow{\varPhi}}(Q)\longrightarrow \R $ by 
   $$T_\lambda(u)=J(u)-\lambda I(u).$$
  Clearly, $T_\lambda \in C^1(W_0^{\overrightarrow{s}}L_{\overrightarrow{\varPhi}}(Q), \R)$ and 
  $$\left\langle T'_\lambda (u),v\right\rangle = \left\langle J' (u),v\right\rangle-\lambda\left\langle I' (u),v\right\rangle$$
  for all $u,v\in W_0^{\overrightarrow{s}}L_{\overrightarrow{\varPhi}}(Q)$. 
  \begin{lem}\label{anlem1}
  Assume that the hypothesis of Theorem $\ref{anth1}$ is fulfilled. Then, there exist $\eta>0$ and $\alpha>0$, such that $T_\lambda(u)\geqslant \alpha>0$ for any $u\in W_0^{\overrightarrow{s}}L_{\overrightarrow{\varPhi}}(Q)$ with $\|u\|_{\overrightarrow{\varPhi}}=\eta$.
 \end{lem}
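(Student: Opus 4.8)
The plan is to verify the classical mountain--pass geometry near the origin for the functional $T_\lambda=J-\lambda I$. I would start by fixing $u$ with $\|u\|_{\overrightarrow{\varPhi}}=\eta$ for some $\eta\in(0,1)$ to be chosen small, and first translate this into a smallness condition for the third (Luxemburg--type) norm $\|\cdot\|$ via Proposition~\ref{norm}: from $(\ref{eq0})$, $(\ref{eq2})$, $(\ref{eq3})$ one has $\tfrac1N\|u\|_{\overrightarrow{\varPhi}}\le\|u\|\le N\|u\|_{\overrightarrow{\varPhi}}$, so choosing $\eta<1/N$ guarantees $\|u\|<1$. Then the modular estimate $(\ref{mod2})$ of Proposition~\ref{mod} gives $\Psi(u)\ge\|u\|^{\varphi_{\max}^+}\ge N^{-\varphi_{\max}^+}\,\|u\|_{\overrightarrow{\varPhi}}^{\varphi_{\max}^+}$. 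Using the standing assumption that $M$ is bounded below by a positive constant $m_0$ (so that $\widehat{M}(t)=\int_0^t M(\tau)\,d\tau\ge m_0 t$ for all $t\ge0$), this yields
$$ J(u)=\widehat{M}\big(\Psi(u)\big)\ge m_0\,\Psi(u)\ge m_0\,N^{-\varphi_{\max}^+}\,\|u\|_{\overrightarrow{\varPhi}}^{\varphi_{\max}^+}=:c_1\,\eta^{\varphi_{\max}^+}. $$

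Next I would bound $I$ from above. Since $1/q(x)\le 1/q^-$ on $\overline\Omega$, we get $I(u)\le \tfrac{1}{q^-}\,\rho_{q(\cdot)}(u)$. By the compact (hence continuous) embedding $W_0^{\overrightarrow{s}}L_{\overrightarrow{\varPhi}}(Q)\hookrightarrow L^{q(x)}(\Omega)$ of Theorem~\ref{anth5}, there is $c_2>0$ with $\|u\|_{q(x)}\le c_2\,\|u\|_{\overrightarrow{\varPhi}}$, so after possibly shrinking $\eta$ we may assume $\|u\|_{q(x)}<1$; then Proposition~\ref{anproop5}(ii) gives $\rho_{q(\cdot)}(u)\le\|u\|_{q(x)}^{q^-}\le c_2^{q^-}\,\|u\|_{\overrightarrow{\varPhi}}^{q^-}$. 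Hence $I(u)\le c_3\,\eta^{q^-}$ for a constant $c_3>0$ independent of $u$ and $\eta$.

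Combining the two estimates, for every $u$ on the sphere $\|u\|_{\overrightarrow{\varPhi}}=\eta$ we obtain
$$ T_\lambda(u)\ge c_1\,\eta^{\varphi_{\max}^+}-\lambda c_3\,\eta^{q^-}=\eta^{\varphi_{\max}^+}\Big(c_1-\lambda c_3\,\eta^{\,q^--\varphi_{\max}^+}\Big). $$
Since the hypothesis $\varphi_{\max}^+<q^-$ of Theorem~\ref{anth1} makes the exponent $q^--\varphi_{\max}^+$ strictly positive, I can fix $\eta\in(0,1)$ small enough that $\lambda c_3\,\eta^{\,q^--\varphi_{\max}^+}\le\tfrac12 c_1$, and then $T_\lambda(u)\ge\tfrac12 c_1\,\eta^{\varphi_{\max}^+}=:\alpha>0$ for all such $u$, which is the claim. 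The one point requiring care is the bookkeeping of the equivalence constants from Proposition~\ref{norm} and of the embedding constant, so that $\eta$ is genuinely small enough to force simultaneously $\|u\|<1$ and $\|u\|_{q(x)}<1$; and one must be sure to use the correct regime of the modular inequalities (the lower bound $(\ref{mod2})$ for $\Psi$ and the upper bound in Proposition~\ref{anproop5}(ii) for $\rho_{q(\cdot)}$), since the opposite inequalities would reverse the exponents and break the comparison. Everything else is a routine chain of the already established modular and embedding estimates.
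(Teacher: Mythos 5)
Your argument is correct and follows essentially the same route as the paper: a modular lower bound for $J$ of order $\|u\|_{\overrightarrow{\varPhi}}^{\varphi^+_{\max}}$ (you via Proposition~\ref{norm} and relation $(\ref{mod2})$, the paper via the per-component estimate $[u]_i^{\varphi_i^+}\leqslant\int_Q\varPhi^i_{x,y}(|D^{s_i}u|)\,d\mu$ and a convexity inequality), together with an upper bound for $I$ of order $\|u\|_{\overrightarrow{\varPhi}}^{q^-}$ (you via Proposition~\ref{anproop5}, the paper by splitting $|u|^{q(x)}\leqslant|u|^{q^-}+|u|^{q^+}$ and the embeddings into $L^{q^-}$ and $L^{q^+}$), concluding from $\varphi^+_{\max}<q^-$. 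Both rely on the same unstated Kirchhoff-type hypothesis $M\geqslant m_0>0$, so your use of it is consistent with the paper's.
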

 \begin{proof}
 First, we point out that
 \begin{equation*}
 |u(x)|^{q^-}+|u(x)|^{q^+}\geqslant |u(x)|^{q(x)}~~\forall x\in \overline{\Omega}.
 \end{equation*}
 Using the above inequality and the definition of $T_\lambda$, we find that 
 \begin{equation}\label{an3}
 T_\lambda(u)\geqslant m_0\int_{Q}\sum_{i=1}^{N}\varPhi_{x,y}^i\left(|D^{s_i}u| \right)d\mu-\dfrac{\lambda}{q^-}\left( \|u\|_{q^-}^{q^-}+\|u\|_{q^+}^{q^+}\right) 
 \end{equation}
 for any $u\in W_0^{\overrightarrow{s}}L_{\overrightarrow{\varPhi}}(Q)$. Since $W_0^{\overrightarrow{s}}L_{\overrightarrow{\varPhi}}(Q)$ is continuously embedded in $L^{q^{+}}(\Omega)$ and in $L^{q^{-}}(\Omega)$ it follows that there exist two positive constants $c_1$ and $c_2$ such that
 \begin{equation}\label{an24}
 ||u||_{\overrightarrow{\varPhi}}\geqslant c_1||u||_{q^+} ~~\forall u\in W_0^{\overrightarrow{s}}L_{\overrightarrow{\varPhi}}(Q)\end{equation}
 and 
 \begin{equation}\label{an25}
 ||u||_{\overrightarrow{\varPhi}}\geqslant c_2||u||_{q^-} ~~\forall u\in W_0^{\overrightarrow{s}}L_{\overrightarrow{\varPhi}}(Q).\end{equation}
 Next, for $u\in W_0^{\overrightarrow{s}}L_{\overrightarrow{\varPhi}}(Q)$ with $\|u\|_{\overrightarrow{\varPhi}}<1$, so, we have $[u]_{i}<1$ for any 
 $i=1,...,N$. Then
 \begin{equation}\label{an4}
 \begin{aligned}
 \dfrac{\|u\|^{\varphi^+_{\max}}_{\overrightarrow{\varPhi}}}{N^{\varphi_{\max}^+-1}} & =N\left( \sum_{i=1}^{N}\dfrac{1}{N}[u]_{i}\right)^{\varphi^+_{\max}}\\
 & \leqslant \sum_{i=1}^{N}[u]_{i}^{\varphi^+_{\max}}\\
  & \leqslant \sum_{i=1}^{N}[u]_{i}^{\varphi^+_{i}}\\
  &\leqslant \int_{Q}\sum_{i=1}^{N}\varPhi_{x,y}^i\left(|D^{s_i}u| \right)d\mu.
  \end{aligned}
 \end{equation}
 Relations $(\ref{an3})-(\ref{an4})$, imply that
$$ 
\begin{aligned}
T_\lambda(u)& \geqslant \dfrac{m_0\|u\|^{\varphi^+_{\max}}_{\overrightarrow{\varPhi}}}{N^{\varphi_{\max}^+-1}}-\dfrac{\lambda}{q^-}\left( (c_1\|u\|_{\overrightarrow{\varPhi}})^{q^-}+(c_2\|u\|_{\overrightarrow{\varPhi}})^{q^+}\right)\\
& = \left( c_3-c_4 \|u\|^{q^+-\varphi^+_{\max}}_{\overrightarrow{\varPhi}}-c_5 \|u\|^{q^--\varphi^+_{\max}}_{\overrightarrow{\varPhi}}\right) \|u\|^{\varphi^+_{\max}}_{\overrightarrow{\varPhi}} 
\end{aligned}
$$
for any $u\in W_0^{\overrightarrow{s}}L_{\overrightarrow{\varPhi}}(Q)$ with $\|u\|_{\overrightarrow{\varPhi}}<1$, where $c_3, c_4$, and $c_5$ are positive constants. Since the function $g : [0,1]\longrightarrow \R$ defined by:
$$g(t)=c_3-c_4 t^{q^+-\varphi^+_{\max}}-c_5 t^{q^--\varphi^+_{\max}}$$
is positive in a neighborhood of the origin, the conclusion of the lemma follows at once. 
\end{proof}
  \begin{lem}\label{anlem2}
    Assume that the hypothesis of Theorem $\ref{anth1}$ is fulfilled. Then, there exist $e>0$ with $\|e\|_{\overrightarrow{\varPhi}}=\eta$ (where $\eta$ is given by Lemma $\ref{anlem1}$) such that $T_\lambda(e)<0$.
   \end{lem}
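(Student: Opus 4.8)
The plan is to produce the ``downhill'' direction required by hypothesis $(G_2)$ of the mountain pass theorem (Theorem \ref{an2.2}) by evaluating the energy functional $T_\lambda$ along a fixed ray and letting the dilation parameter tend to infinity; the decisive point is that the growth exponent $\varphi^+_{\max}$ of the anisotropic diffusion part is strictly below the growth exponent $q^-$ of the reaction part, so the reaction eventually dominates. First I would fix an arbitrary $u_0\in C_0^\infty(\Omega)\setminus\{0\}$; since $W_0^{\overrightarrow{s}}L_{\overrightarrow{\varPhi}}(Q)$ is by definition the closure of $C_0^\infty(\Omega)$, we have $u_0\in W_0^{\overrightarrow{s}}L_{\overrightarrow{\varPhi}}(Q)$, hence $\Psi(u_0)<\infty$, while $I_1(u_0)=\int_\Omega|u_0|^{q(x)}\,dx>0$ because $u_0\not\equiv 0$ on $\Omega$.

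Then, for $t>1$, I would estimate the two parts of $T_\lambda(tu_0)=J(tu_0)-\lambda I(tu_0)$ separately. Since $D^{s_i}(tu_0)=t\,D^{s_i}u_0$, relation $(\ref{r10})$ with $\sigma=t$ gives $\varPhi_{x,y}^i\!\left(|D^{s_i}(tu_0)|\right)\le t^{\varphi_i^+}\varPhi_{x,y}^i\!\left(|D^{s_i}u_0|\right)\le t^{\varphi^+_{\max}}\varPhi_{x,y}^i\!\left(|D^{s_i}u_0|\right)$ on $Q$, so that $\Psi(tu_0)\le t^{\varphi^+_{\max}}\Psi(u_0)$; as $M\ge 0$ makes $\widehat M$ nondecreasing, the growth assumption on $M$ then yields constants $C_1,C_2>0$ (independent of $t$) with $J(tu_0)=\widehat M\!\left(\Psi(tu_0)\right)\le C_1\,t^{\varphi^+_{\max}}+C_2$. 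For the reaction term, $t^{q(x)}\ge t^{q^-}$ for $t>1$ together with $q(x)\le q^+$ gives $I(tu_0)\ge \dfrac{t^{q^-}}{q^+}\,I_1(u_0)$. Combining the two bounds,
\[
T_\lambda(tu_0)\ \le\ C_1\,t^{\varphi^+_{\max}}+C_2-\frac{\lambda\,I_1(u_0)}{q^+}\,t^{q^-}\qquad(t>1),
\]
and since $\varphi^+_{\max}<q^-$ (the hypothesis of Theorem \ref{anth1}) and $\lambda\,I_1(u_0)/q^+>0$, the right-hand side tends to $-\infty$ as $t\to+\infty$. I would then choose $t_0>1$ large enough that simultaneously $T_\lambda(t_0u_0)<0$ and, by homogeneity of the norm, $\|t_0u_0\|_{\overrightarrow{\varPhi}}=t_0\|u_0\|_{\overrightarrow{\varPhi}}>\eta$ (with $\eta$ from Lemma \ref{anlem1}), and set $e:=t_0u_0$.

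The only delicate step is the upper estimate for $J(tu_0)$: it relies on the structural hypothesis on the Kirchhoff function $M$ guaranteeing that $\widehat M$ grows at most linearly, equivalently that composing $\widehat M$ with the $t^{\varphi^+_{\max}}$-blow-up of the modular $\Psi$ still produces a power of $t$ strictly below $q^-$; once this is secured, the subcriticality condition $\varphi^+_{\max}<q^-$ closes the argument, every other estimate being elementary.
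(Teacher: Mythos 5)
Your proof follows essentially the same route as the paper's: fix a nonzero $\theta\in C_0^\infty(\Omega)$, use relation (\ref{r10}) to bound the modular $\Psi(t\theta)$ by $t^{\varphi^+_{\max}}\Psi(\theta)$, bound the reaction term from below by $\frac{t^{q^-}}{q^+}\int_\Omega|\theta|^{q(x)}\,dx$, and let $t\to\infty$ using $\varphi^+_{\max}<q^-$. The only difference is the treatment of the Kirchhoff term --- the paper invokes an (unstated) condition of the form $\widehat M(s)\le \widehat M(1)\,s^{1/\theta}$ so that $J(t\theta)\lesssim t^{\varphi^+_{\max}/\theta}$, while you assume $\widehat M$ grows at most linearly; both rest on structural hypotheses on $M$ that the paper never makes explicit, and your conclusion $\|e\|_{\overrightarrow{\varPhi}}>\eta$ is in fact the correct one for condition $(G_2)$, since the ``$=\eta$'' in the statement is incompatible with Lemma \ref{anlem1}.
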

   \begin{proof}
   Let $\theta\in C_0^\infty(\Omega)$, $\theta\geqslant 0$ and $\theta\neq 0$, be fixed, and let $t>1$. Using Lemma $\ref{2.2..}$, we find that 
   $$
   \begin{aligned}
  T_\lambda(t\theta) &=\widehat{M}(1)\left( \displaystyle\int_{Q}\sum_{i=1}^{N}\varPhi_{x,y}^i\left(t|D^{s_i}\theta| \right)d\mu\right)^{\frac{1}{\theta}} -\lambda\displaystyle\int_{\Omega}\dfrac{1}{q(x)}t^{q(x)}|\theta|^{q(x)}dx\\
  &\leqslant \widehat{M}(1)\left(\displaystyle\int_{Q}\sum_{i=1}^{N}t^{\varphi_i^+}\varPhi_{x,y}^i\left(|D^{s_i}\theta| \right)d\mu\right)^{\frac{1}{\theta}}-\lambda\displaystyle\int_{\Omega}\dfrac{t^{q^-}}{q^+}|\theta|^{q(x)}dx\\
    &\leqslant \widehat{M}(1)t^{\frac{\varphi_{\max}^+}{\theta}}\left( \displaystyle\int_{Q}\sum_{i=1}^{N}\varPhi_{x,y}^i\left(|D^{s_i}\theta| \right)d\mu\right)^{\frac{1}{\theta}}-\dfrac{\lambda t^{q^-}}{q^+}\displaystyle\int_{\Omega}|\theta|^{q(x)}dx.
    \end{aligned}
   $$
   Since $q^->\frac{\varphi_{\max}^+}{\theta}$, it is clear that $\lim\limits_{t\rightarrow \infty}T_\lambda(t\theta)=-\infty$. Then, for $t>1$ large enough, we can take $e=t \theta$ such that $\|e\|_{\overrightarrow{\varPhi}}=\eta$ and  $T_\lambda(e)<0$. This completes the proof.
      \end{proof}
  \begin{lem}\label{anlem3}
Assume that the hypothesis of Theorem $\ref{anth2}$ is fulfilled. Then, there exists $\lambda_*>0$ such that for any
$\lambda\in (0,\lambda_*)$, there are $\rho, \alpha>0$, such that $T_\lambda(u)\geqslant \alpha>0$ for any $u\in W_0^{\overrightarrow{s}}L_{\overrightarrow{\varPhi}}(Q)$ with $||u||_{\overrightarrow{\varPhi}}=\rho$.
  \end{lem}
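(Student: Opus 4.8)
The plan is to bound $T_\lambda$ from below on the sphere $\{u:\|u\|_{\overrightarrow{\varPhi}}=\rho\}$ for a \emph{fixed, small} radius $\rho$, and only afterwards to fix the threshold $\lambda_*$ as a function of $\rho$. Recall that $J(u)=\widehat{M}\big(\Psi(u)\big)$, so by the standing lower bound $M(t)\ge m_0>0$ we have $\widehat{M}(\Psi(u))\ge m_0\,\Psi(u)$, while $I(u)=\int_\Omega\frac1{q(x)}|u|^{q(x)}\,dx\le\frac1{q^-}\rho_{q(\cdot)}(u)$; hence $T_\lambda(u)\ge m_0\,\Psi(u)-\frac{\lambda}{q^-}\rho_{q(\cdot)}(u)$. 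For the leading term I would reuse verbatim the chain of inequalities $(\ref{an4})$ from the proof of Lemma $\ref{anlem1}$: convexity of $t\mapsto t^{\varphi^+_{\max}}$ together with $[u]_i\le\|u\|_{\overrightarrow{\varPhi}}<1$ gives $\Psi(u)\ge N^{-(\varphi^+_{\max}-1)}\,\|u\|_{\overrightarrow{\varPhi}}^{\varphi^+_{\max}}$ whenever $\|u\|_{\overrightarrow{\varPhi}}<1$.

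For the lower-order term, Theorem $\ref{anth5}$ provides a continuous embedding $W_0^{\overrightarrow{s}}L_{\overrightarrow{\varPhi}}(Q)\hookrightarrow L^{q(x)}(\Omega)$, so $\|u\|_{q(x)}\le c\,\|u\|_{\overrightarrow{\varPhi}}$ for some $c>0$. Choosing $\rho$ with $\rho<1$ and $c\rho<1$, Proposition $\ref{anproop5}$(ii) yields $\rho_{q(\cdot)}(u)\le\|u\|_{q(x)}^{q^-}\le(c\rho)^{q^-}$ on the sphere $\|u\|_{\overrightarrow{\varPhi}}=\rho$. Combining the two estimates,
\[
T_\lambda(u)\ \ge\ \frac{m_0}{N^{\varphi^+_{\max}-1}}\,\rho^{\varphi^+_{\max}}-\frac{\lambda\,c^{q^-}}{q^-}\,\rho^{q^-}
\ =\ \left(\frac{m_0}{N^{\varphi^+_{\max}-1}}\,\rho^{\varphi^+_{\max}-q^-}-\frac{\lambda\,c^{q^-}}{q^-}\right)\rho^{q^-}.
\]
Since $q^-<\varphi^-_{\min}\le\varphi^+_{\max}$, the exponent $\varphi^+_{\max}-q^-$ is strictly positive, so the bracket is positive for $\lambda$ small. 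I would therefore freeze such a $\rho$, set $\lambda_*:=\dfrac{q^-\,m_0}{2\,N^{\varphi^+_{\max}-1}\,c^{q^-}}\,\rho^{\varphi^+_{\max}-q^-}$, and conclude that for every $\lambda\in(0,\lambda_*)$ one has $T_\lambda(u)\ge\alpha:=\dfrac{m_0}{2\,N^{\varphi^+_{\max}-1}}\,\rho^{\varphi^+_{\max}}>0$ for all $u$ with $\|u\|_{\overrightarrow{\varPhi}}=\rho$, which is exactly the assertion.

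The genuinely different point compared with Lemma $\ref{anlem1}$ — and the only place requiring care — is the order of the quantifiers. Here $q^-<\varphi^+_{\max}$, so as $\rho\to0^+$ the term $\frac{c^{q^-}}{q^-}\rho^{q^-}$ dominates $N^{-(\varphi^+_{\max}-1)}\rho^{\varphi^+_{\max}}$; consequently one \emph{cannot} keep $\lambda$ fixed and shrink $\rho$ as was done in Lemma $\ref{anlem1}$. The radius $\rho$ must be chosen first (subject only to $\rho<1$ and $c\rho<1$), and $\lambda_*$ is then built from that $\rho$. All remaining ingredients — the modular/norm inequalities of Lemma $\ref{2.2..}$ and Proposition $\ref{anproop5}$, the embedding constant $c$, and the splitting of $I$ and $J$ — are routine and already contained in the preliminaries and in the proof of Lemma $\ref{anlem1}$.
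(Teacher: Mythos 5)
Your argument is essentially identical to the paper's proof: both fix a small radius $\rho<1$ first, use the embedding into $L^{q(x)}(\Omega)$ together with Proposition \ref{anproop5} to bound $\int_\Omega|u|^{q(x)}dx$ by $(c\rho)^{q^-}$, reuse the estimate $(\ref{an4})$ for the leading term, and only then define $\lambda_*$ in terms of $\rho$ (your formula for $\lambda_*$ and $\alpha$ matches $(\ref{an40})$ up to the placement of the embedding constant). Your remark on the order of quantifiers correctly identifies the one point where this lemma differs from Lemma \ref{anlem1}.
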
  
  \begin{proof}
  Since $W_0^{\overrightarrow{s}}L_{\overrightarrow{\varPhi}}(Q)$ is continuously embedded in $L^{q(x)}(\Omega)$, it follows that there exists a positive constant $c_1$  such that
   \begin{equation}\label{an11}
  ||u||_{\overrightarrow{\varPhi}}\geqslant c_1||u||_{q(x)} ~~\forall u\in W_0^{\overrightarrow{s}}L_{\overrightarrow{\varPhi}}(Q)\end{equation} 
   we fix $\rho \in (0,1)$ such that $\rho<\dfrac{1}{c_1}$. Then relation $(\ref{an11})$ implies that 
   $$\|u\|_{q(x)}<1~~\text{for all } u\in W_0^{\overrightarrow{s}}L_{\overrightarrow{\varPhi}}(Q) ~~\text{with } ||u||_{\overrightarrow{\varPhi}}=\rho.$$
   Then, we can apply Proposition $\ref{anproop5}$, and we have
   \begin{equation}\label{an12}
   \int_\Omega |u(x)|^{q(x)}dx\leqslant \|u\|_{q(x)}^{q^-}~~\text{ for all } u\in W_0^{\overrightarrow{s}}L_{\overrightarrow{\varPhi}}(Q)~~ \text{ with } ||u||_{\overrightarrow{\varPhi}}=\rho.
   \end{equation}
   Relation $(\ref{an11})$  and $(\ref{an12})$ implies that
    \begin{equation}\label{an13}
      \int_\Omega |u(x)|^{q(x)}dx\leqslant c_1^{q^-}\|u\|_{\overrightarrow{\varPhi}}^{q^-}~~\text{ for all } u\in W_0^{\overrightarrow{s}}L_{\overrightarrow{\varPhi}}(Q)~~ \text{ with } ||u||_{\overrightarrow{\varPhi}}=\rho.
      \end{equation}
   Taking into account Relations $(\ref{an4})$ and $(\ref{an13})$, we deduce that for any $u \in W_0^{\overrightarrow{s}}L_{\overrightarrow{\varPhi}}(Q)$ with  $||u||_{\overrightarrow{\varPhi}}=\rho$, the following inequalities hold true:
   $$
   \begin{aligned}
   T_\lambda(u) &\geqslant  \dfrac{m_0\|u\|^{\varphi^+_{\max}}_{\overrightarrow{\varPhi}}}{N^{\varphi_{\max}^+-1}}-\dfrac{\lambda}{q^-}\int_\Omega|u(x)|^{q(x)}dx\\
   & \geqslant \dfrac{m_0\|u\|^{\varphi^+_{\max}}_{\overrightarrow{\varPhi}}}{N^{\varphi_{\max}^+-1}}-\dfrac{\lambda c_1^{q-}}{q^-}\|u\|^{q^-}_{\overrightarrow{\varPhi}}\\
  & =\rho^{q^-}\left(m_0\dfrac{\rho^{\varphi^+_{\max}-q^-}}{N^{\varphi_{\max}^+-1}}- \dfrac{\lambda c_1^{q-}}{q^-}\right).
   \end{aligned}
   $$
   Hence, if we define
  \begin{equation}\label{an40}
  \lambda_*=m_0\dfrac{\rho^{\varphi^+_{\max}-q^-}}{2c_1^{q^-} N^{\varphi_{\max}^+-1}}q^-.\end{equation}
  Then, for any $\lambda\in (0,\lambda_*)$ and $u\in W_0^{\overrightarrow{s}}L_{\overrightarrow{\varPhi}}(Q)$ with  $||u||_{\overrightarrow{\varPhi}}=\rho$, we have
  $$T_\lambda(u)\geqslant \alpha>0,$$
  such that 
  $$\alpha=\dfrac{m_0\rho^{\varphi^+_{\max}}}{2 N^{\varphi^+_{\max}-1}}.$$
  This completes the proof.
     \end{proof} 
   \begin{lem}\label{anlem4}
   Assume that the hypothesis of Theorem $\ref{anth2}$ is fulfilled. Then, there exists $\phi>0$ such that $\phi\geqslant 0$,  $\phi\neq 0$, and $T_\lambda(t\phi)<0$ for $t>0$ small enough.
     \end{lem}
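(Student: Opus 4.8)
The plan is to produce the test function by localising where the exponent $q$ is closest to its infimum. Since $q\in C(\overline{\Omega})$ and, by the hypothesis of Theorem \ref{anth2}, $q^-<\varphi^-_{\min}$, I first fix $\delta_0\in\big(0,\varphi^-_{\min}-q^-\big)$ and set $\omega:=\{x\in\Omega:\ q(x)<q^-+\delta_0\}$, which is a nonempty open subset of $\Omega$. Then I choose $\phi\in C_0^\infty(\omega)$ with $\phi\geqslant 0$ and $\phi\not\equiv 0$; such a $\phi$ belongs to $W_0^{\overrightarrow{s}}L_{\overrightarrow{\varPhi}}(Q)$, the modular $K_\phi:=\Psi(\phi)=\sum_{i=1}^N\int_Q\varPhi^i_{x,y}(|D^{s_i}\phi|)\,d\mu$ is finite (cf. Theorem \ref{TT}), and $c_\phi:=\int_\Omega|\phi|^{q(x)}\,dx>0$ because $\phi$ is continuous, nonnegative and not identically zero.

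Next I would estimate $T_\lambda(t\phi)=J(t\phi)-\lambda I(t\phi)$ from above for $t\in(0,1)$. Since $D^{s_i}(t\phi)=t\,D^{s_i}\phi$, applying $(\ref{r11})$ of Lemma $\ref{2.2..}$ with $\sigma=t$ gives $\varPhi^i_{x,y}(t|D^{s_i}\phi|)\leqslant t^{\varphi_i^-}\varPhi^i_{x,y}(|D^{s_i}\phi|)\leqslant t^{\varphi^-_{\min}}\varPhi^i_{x,y}(|D^{s_i}\phi|)$, whence $\Psi(t\phi)\leqslant t^{\varphi^-_{\min}}K_\phi$. As $\widehat{M}$ is nondecreasing and, being continuous with $\widehat{M}(0)=0$, satisfies $\widehat{M}(\tau)\leqslant C\tau$ for $\tau\in[0,K_\phi]$, it follows that $J(t\phi)=\widehat{M}(\Psi(t\phi))\leqslant C_1\,t^{\varphi^-_{\min}}$ for all $t\in(0,1)$, with $C_1>0$ independent of $t$.

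For the term $\lambda I(t\phi)$ I would argue the opposite way: since $\mathrm{supp}\,\phi\subset\omega$ and $q(x)<q^-+\delta_0$ there, one has $t^{q(x)}\geqslant t^{q^-+\delta_0}$ on $\omega$ for $t\in(0,1)$, so
\[
I(t\phi)=\int_\omega\frac{t^{q(x)}}{q(x)}\,|\phi(x)|^{q(x)}\,dx\ \geqslant\ \frac{t^{q^-+\delta_0}}{q^+}\,c_\phi ,
\]
and therefore $\lambda I(t\phi)\geqslant C_2\,t^{q^-+\delta_0}$ with $C_2=\lambda c_\phi/q^+>0$. Combining the two bounds, $T_\lambda(t\phi)\leqslant C_1 t^{\varphi^-_{\min}}-C_2 t^{q^-+\delta_0}=t^{q^-+\delta_0}\big(C_1\,t^{\varphi^-_{\min}-q^--\delta_0}-C_2\big)$ for $t\in(0,1)$. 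Because $\delta_0$ was chosen so that $\varphi^-_{\min}-q^--\delta_0>0$, the bracket tends to $-C_2<0$ as $t\to 0^+$, so $T_\lambda(t\phi)<0$ for all sufficiently small $t>0$, and $\phi$ is the desired function.

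The only genuinely delicate point is the choice of $\phi$: it must be concentrated in the region where $q$ stays strictly below $\varphi^-_{\min}$, since a generic test function only yields the cruder lower bound $I(t\phi)\gtrsim t^{q^+}$, which fails to dominate $J(t\phi)\sim t^{\varphi^-_{\min}}$ whenever $q^+\geqslant\varphi^-_{\min}$. Once $\phi$ is localised as above, the remaining steps are a direct application of the modular inequalities of Lemma $\ref{2.2..}$ together with the continuity of $M$ near the origin.
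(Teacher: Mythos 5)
Your argument is essentially the paper's own proof: both localise $\phi$ where $q$ stays below $q^-+\varepsilon_0<\varphi^-_{\min}$ (the paper takes $\Omega_0=\{|q-q^-|<\varepsilon_0\}$ and $\phi\equiv 1$ there), both use inequality (\ref{3.2})/(\ref{r11}) to get $\Psi(t\phi)\leqslant t^{\varphi^-_{\min}}\Psi(\phi)$, and both bound $I(t\phi)$ from below by $c\,t^{q^-+\varepsilon_0}$ before comparing exponents; your verification that $\Psi(\phi)<\infty$ and $\int|\phi|^{q(x)}dx>0$ matches the paper's closing claim. The one point to repair is your justification of $\widehat{M}(\tau)\leqslant C\tau$ on $[0,K_\phi]$: continuity together with $\widehat{M}(0)=0$ does \emph{not} imply a linear bound (consider $\widehat{M}(\tau)=\sqrt{\tau}$, which would degrade your exponent to $\varphi^-_{\min}/2$ and break the comparison); the correct reason is that $\widehat{M}(t)=\int_0^tM(s)\,ds$ with $M$ continuous, so $\widehat{M}(\tau)\leqslant\bigl(\max_{[0,K_\phi]}M\bigr)\tau$. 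The paper instead invokes an (unstated) Kirchhoff-type growth condition yielding $\widehat{M}(\Psi(t\phi))\leqslant\widehat{M}(1)\,t^{\varphi^-_{\min}/\theta}\Psi(\phi)^{1/\theta}$, which is why its exponent comparison reads $q^-+\varepsilon_0<\varphi^-_{\min}/\theta$ rather than your cleaner $q^-+\delta_0<\varphi^-_{\min}$.
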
    
   \begin{proof}
 By assumption   $(\ref{an15})$ we can chose $\varepsilon_0>0$ such that $q^-+\varepsilon_0<\varphi^-_{\min}$. On the other hand, since $q\in C(\overline{\Omega})$, it follows that there exists an open set $\Omega_0\subset \Omega$ such that $|q(x)-q^-|<\varepsilon_0$ for all $x\in \Omega_0$. Thus, $q(x)\leqslant q^-+\varepsilon_0<\varphi^-_{\min}$ for all $x\in \Omega_0$. 
 Let $\phi\in C_0^\infty(\Omega)$ be such that $supp(\phi)\supset \overline{\Omega_0}$, $\phi(x)=1$ for all $x\in \overline{\Omega_0}$, and $0\leqslant \phi\leqslant 1$ in $\overline{\Omega_0}$. Then, using the above information and the definition of $\varphi_i^-$, for any $t\in(0,1)$, we have
 $$
 \begin{aligned}
 T_\lambda(t\phi)& =\widehat{M}(1)\left(\displaystyle\int_{Q}\sum_{i=1}^{N}\varPhi_{x,y}^i\left(t|D^{s_i}\phi| \right)d\mu\right)^{\frac{1}{\theta}} -\lambda\displaystyle\int_{\Omega}\dfrac{1}{q(x)}t^{q(x)}|\phi|^{q(x)}dx\\
 &\leqslant \widehat{M}(1)\left(\displaystyle\int_{Q}\sum_{i=1}^{N}t^{\varphi_i^-}\varPhi_{x,y}^i\left(|D^{s_i}\phi| \right)d\mu\right)^{\frac{1}{\theta}}-\lambda\displaystyle\int_{\Omega_0}\dfrac{t^{q(x)}}{q(x)}|\phi|^{q(x)}dx\\
     &\leqslant t^{\frac{\varphi_{\min}^-}{\theta}}\widehat{M}(1)\left(\displaystyle\int_{Q}\sum_{i=1}^{N}\varPhi_{x,y}^i\left(|D^{s_i}\phi| \right)d\mu\right)^{\frac{1}{\theta}}-\dfrac{\lambda t^{q^-+\varepsilon_0}}{q^+}\displaystyle\int_{\Omega_0}|\phi|^{q(x)}dx.
     \end{aligned}
    $$
    Therefore $T_\lambda(t\phi)<0$, for $t<\delta^{1/(\frac{\varphi^-_{\min}}{\theta}-q^--\varepsilon_0)}$ with
    $$0<\delta<\min\left\lbrace 1,~~ \dfrac{\dfrac{\lambda}{q^+}\displaystyle\int_{\Omega_0}|\phi|^{q(x)}dx}{\left( \displaystyle\int_{Q}\sum_{i=1}^{N}\varPhi_{x,y}^i\left(|D^{s_i}\phi| \right)d\mu\right)^{\frac{1}{\theta}}} \right\rbrace. $$
   This is possible, since we claim that 
   $$  \left( \displaystyle\int_{Q}\sum_{i=1}^{N}\varPhi_{x,y}^i\left(|D^{s_i}\phi| \right)d\mu\right)^{\frac{1}{\theta}}>0.$$
   Indeed, it is clear that
   $$\int_{\Omega_0}|\phi|^{q(x)}dx\leqslant \int_{\Omega}|\phi|^{q(x)}dx\leqslant \int_{\Omega}|\phi|^{q^-}dx.$$
   On the other hand, since  $W_0^{\overrightarrow{s}}L_{\overrightarrow{\varPhi}}(Q)$ is continuously embedded in $L^{q^-}(\Omega)$, it follows that there exists a positive constant $c$  such that 
   $$|\phi|_{q^-}\leqslant c ||\phi||_{\overrightarrow{\varPhi}}.$$
   The last two inequalities imply that
   $$\|\phi\|_{\overrightarrow{\varPhi}}>0$$
   and combining this fact with Proposition $\ref{mod}$, the claim follows at once. The proof of the lemma is now completed.
   \end{proof}   
  \subsection{Proof of mains results}
  \begin{proof}[\textbf{Proof of Theorem $\ref{anth1}$}]
  By Lemmas $\ref{anlem1}$ and $\ref{anlem2}$, we can using the mountain pass theorem $\ref{an2.2}$, and we deduce the existence of a sequence $\left\lbrace u_n\right\rbrace \in W_0^{\overrightarrow{s}}L_{\overrightarrow{\varPhi}}(Q)$ such that
  \begin{equation}\label{an29}
  T_\lambda(u_n)\rightarrow c_1~~\text{and}~~ T'_\lambda(u_n)\rightarrow 0~~\text{in}~~(W_0^{\overrightarrow{s}}L_{\overrightarrow{\varPhi}}(Q))^*~~\text{as}~~n\rightarrow \infty.
  \end{equation}
 We assume that $\left\lbrace u_n\right\rbrace$ is bounded in  $W_0^{\overrightarrow{s}}L_{\overrightarrow{\varPhi}}(Q)$. By contradiction, we suppose that there exists a subsequence still denoted by $\left\lbrace u_n\right\rbrace $ such that $\|u_n\|_{\overrightarrow{\varPhi}}\rightarrow \infty$ and that $\|u_n\|_{\overrightarrow{\varPhi}}>1$ for all $n$. Relation $(\ref{an29})$ and the above considerations implies that for $n$ large enough, we have
{\small $$
 \begin{aligned}
 1&+c_1+\|u_n\|_{\overrightarrow{\varPhi}} \geqslant T_\lambda(u_n)-\dfrac{1}{q^-}\left\langle T'_\lambda(u_n), u_n\right\rangle\\
 &\geqslant \widehat{M}\left(\sum_{i=1}^{N} \int_{Q} \varPhi_{x,y}^i\left(|D^{s_i}u_n| \right)\right)-\dfrac{1}{q^-}M\left(\sum_{i=1}^{N} \int_{Q} \varPhi_{x,y}^i\left(|D^{s_i}u_n| \right)\right)\sum_{i=1}^{N} \int_{Q}\varphi_{x,y}^i(|D^{s_i}u_n|)D^{s_i}u_nd\mu  \\
 &\geqslant \widehat{M}\left(\sum_{i=1}^{N} \int_{Q} \varPhi_{x,y}^i\left(|D^{s_i}u_n| \right)d\mu\right)-\dfrac{\varphi^+_{\max}}{q^-}M\left(\sum_{i=1}^{N} \int_{Q} \varPhi_{x,y}^i\left(|D^{s_i}u_n| \right)d\mu\right)\sum_{i=1}^{N} \int_{Q} \varPhi_{x,y}^i\left(|D^{s_i}u_n| \right)d\mu  \\
 &\geqslant \left( 1-\dfrac{\varphi^+_{\max}}{q^-\theta}\right) \widehat{M}\left( \sum_{i=1}^{N} \int_{Q} \varPhi_{x,y}^i\left(|D^{s_i}u_n| \right)d\mu\right)\\
 &\geqslant
 m_0\left( 1-\dfrac{\varphi^+_{\max}}{q^-\theta}\right) \left( \dfrac{\|u_n\|^{\varphi^-_{\min}}_{\overrightarrow{\varPhi}}}{N^{\varphi_{\min}^--1}}-N\right).
  \end{aligned}
 $$}
 Since $q^-\theta>\varphi^+_{\max}$, and dividing by $\|u_n\|^{\varphi^-_{\min}}_{\overrightarrow{\varPhi}}$ in the above inequality and passing to the limit as $n\rightarrow \infty$, we obtain a contradiction. Then $\left\lbrace u_n\right\rbrace $ is bounded in $W_0^{\overrightarrow{s}}L_{\overrightarrow{\varPhi}}(Q)$. This information combined with the fact that $W_0^{\overrightarrow{s}}L_{\overrightarrow{\varPhi}}(Q)$ is reflexive, implies that there exists a subsequence still denoted by $\left\lbrace u_n\right\rbrace $ and $u_0\in W_0^{\overrightarrow{s}}L_{\overrightarrow{\varPhi}}(Q)$ such that $\left\lbrace u_n\right\rbrace $ converges weakly to $u_0$ in $W_0^{\overrightarrow{s}}L_{\overrightarrow{\varPhi}}(Q)$. On the other hand, since $W_0^{\overrightarrow{s}}L_{\overrightarrow{\varPhi}}(Q)$ is compactly embedded in $L^{q(x)}(\Omega)$, it follows that $\left\lbrace u_n\right\rbrace $ converges strongly to $u_0$ in $L^{q(x)}(\Omega)$.
 Then by H\"{o}lder inequality, we have that 
 $$ \lim\limits_{n\rightarrow \infty}\int_\Omega |u_n|^{q(x)-2}u_n(u_n-u_0)dx=0.$$
 This fact and relation $(\ref{an29})$, implies that
 $$\lim\limits_{n\rightarrow \infty}\left\langle T'_\lambda(u_n), u_n-u_0\right\rangle =0.$$
 Thus we deduce that 
 \begin{equation}\label{an311}
 \lim\limits_{n\rightarrow \infty}M\left(\Psi(u_n)\right)\sum_{i=1}^{N} \int_{Q}a_{x,y}^i(|D^{s_i}u_n|)D^su_n\left( D^{s_i}u_n-D^{s_i}u_0 \right)d\mu =0.
 \end{equation}
 Since ${u_n}$ is bounded in $W_0^{\overrightarrow{s}}L_{\overrightarrow{\varPhi}}(Q)$, then by Proposition \ref{mod}, $\Psi(u_n)$ is also bounded,
then
$$\Psi(u_n)\longrightarrow t_0~~~~\text{as}~~n\rightarrow \infty$$
If $t_0 = 0$, then using Proposition \ref{mod}, we get ${u_n}$ that strongly converges to $u_0$ in
$W_0^{\overrightarrow{s}}L_{\overrightarrow{\varPhi}}(Q)$,\\
If $t_0 > 0$, it follows from the continuity of the function $M$ that
$$M\left(\Psi(u_n)\right)\longrightarrow M(t_0)~~~~\text{as}~~n\rightarrow \infty$$
Thus, by $(M_0)$, for sufficiently large $n$, we get
$$M\left(\Psi(u_n)\right)\geqslant C_0>0,$$
This fact and relation $(\ref{an311})$ implies that 
\begin{equation}\label{an31}
 \lim\limits_{n\rightarrow \infty}\sum_{i=1}^{N} \int_{Q}a_{x,y}^i(|D^{s_i}u_n|)D^{s_i}u_n\left( D^{s_i}u_n-D^{s_i}u_0 \right)d\mu =0.
 \end{equation}
 Since $\left\lbrace u_n\right\rbrace $ converge weakly to $u_0$ in $W_0^{\overrightarrow{s}}L_{\overrightarrow{\varPhi}}(Q)$, by relation $(\ref{an31})$, we find that 
 {\small \begin{equation}\label{an32}
  \lim\limits_{n\rightarrow \infty}\sum_{i=1}^{N} \int_{Q}\left( a_{x,y}^i(|D^{s_i}u_n|)D^{s_i}u_n-a_{x,y}^i(|D^{s_i}u_0|)D^{s_i}u_0\right) \left( D^{s_i}u_n-D^{s_i}u_0 \right)d\mu =0.
 \end{equation}}
 Since, for each $i\in \left\lbrace 1,...,N\right\rbrace $, $\varPhi_{x,y}^i$ is convex, we have
 $$\varPhi_{x,y}^i(|D^{s_i}u|)\leqslant \varPhi_{x,y}^i\left( \dfrac{|D^{s_i}u+D^{s_i}v|}{2}\right) +a_{x,y}^i(|D^{s_i}u|)D^su\dfrac{D^{s_i}u-D^{s_i}v}{2}$$
  $$\varPhi_{x,y}^i(|D^{s_i}v|)\leqslant \varPhi_{x,y}^i\left( \dfrac{|D^{s_i}u+D^{s_i}v|}{2}\right) +a_{x,y}^i(|D^{s_i}v|)D^{s_i}v\dfrac{D^{s_i}v-D^{s_i}u}{2}$$
  for every $u,v \in W_0^{\overrightarrow{s}}L_{\overrightarrow{\varPhi}}(Q)$.
  Adding the above two relations and integrating over $Q$, we find that
\begin{equation}\label{an33}
  \begin{aligned}
  \dfrac{1}{2}&\int_{Q} \left( a_{x,y}^i(|D^{s_i}u|)D^{s_i}u-a_{x,y}^i(|D^{s_i}v|)D^{s_i}v\right) \left( D^{s_i}u-D^{s_i}v \right)d\mu\\
  &\geqslant \int_Q\varPhi_i(|D^{s_i}u|) d\mu+ \int_Q\varPhi_{x,y}^i(|D^{s_i}v|) d\mu- 2\int_Q\varPhi_{x,y}^i\left( \dfrac{|D^{s_i}u-D^{s_i}v|}{2}\right)d\mu,
    \end{aligned}
\end{equation}
 for every $u,v \in W_0^{\overrightarrow{s}}L_{\overrightarrow{\varPhi}}(Q)$, and each $i\in \left\lbrace 1,...,N\right\rbrace.$
 
 On the other hand, since for each $i\in \left\lbrace 1,...,N\right\rbrace $ we know that $\varPhi_{x,y}^i~:~ [0,\infty) \rightarrow \R$ is an increasing continuous function, with $\varPhi_{x,y}^i(0)=0$. Then by the conditions \hyperref[v1]{$(\varPhi_1)$} and $(\ref{r1})$, we can apply \cite[Lemma 2.1]{Lam}  in order to obtain
 \begin{equation}\label{an34}
 \begin{aligned}
 \dfrac{1}{2}&\left[ \int_Q\varPhi_{x,y}^i(|D^su|) d\mu+ \int_Q\varPhi_{x,y}^i(|D^{s_i}v|) d\mu\right] \\
& \geqslant \int_Q\varPhi_{x,y}^i\left( \dfrac{|D^{s_i}u+D^{s_i}v|}{2}\right)d\mu+\int_Q\varPhi_{x,y}^i\left( \dfrac{|D^{s_i}u-D^{s_i}v|}{2}\right)d\mu,
 \end{aligned}
 \end{equation}
 for every $u,v \in W_0^{\overrightarrow{s}}L_{\overrightarrow{\varPhi}}(Q)$, and each $i\in \left\lbrace 1,...,N\right\rbrace.$ 
 By $(\ref{an33})$ and $(\ref{an34})$, it follows that for each $i\in \left\lbrace 1,...,N\right\rbrace$, we have
 \begin{equation}\label{an35}
 \begin{aligned}
 & \int_{Q} \left( a_{x,y}^i(|D^{s_i}u|)D^{s_i}u-a_{x,y}^i(|D^{s_i}v|)D^{s_i}v\right) \left( D^su-D^sv \right)d\mu\\
 &\geqslant 4\int_Q\varPhi_{x,y}^i\left( \dfrac{|D^{s_i}u-D^{s_i}v|}{2}\right)d\mu
 \end{aligned}
 \end{equation}
 for every $u,v \in W_0^{\overrightarrow{s}}L_{\overrightarrow{\varPhi}}(Q)$, and each $i\in \left\lbrace 1,...,N\right\rbrace.$ \\
 Relations $(\ref{an32})$ and $(\ref{an35})$ show that $\left\lbrace u_n\right\rbrace $ converge strongly to $u_0$ in $W_0^{\overrightarrow{s}}L_{\overrightarrow{\varPhi}}(Q)$. Then by relation $(\ref{an29})$, we have 
 $T_\lambda(u_0)=c_1>0$ and $T'_\lambda(u_0)=0$, that is, $u_0$ is a non trivial weak solution.
    \end{proof}
  \begin{proof}[\textbf{Proof of Theorem $\ref{anth2}$}]    
   Let $\lambda_*>0$ be defined as in $(\ref{an40})$ and $\lambda\in (0,\lambda_*)$. By Lemma $\ref{anlem3}$ it follows that on the boundary oh the ball centered in the origin and of radius $\rho$ in $W_0^{\overrightarrow{s}}L_{\overrightarrow{\varPhi}}(Q)$, denoted by $B_\rho(0)$, we have 
         $$\inf\limits_{\partial B_\rho(0)}T_\lambda>0.$$
      On the other hand, by Lemma $\ref{anlem4}$, there exists $\phi \in W_0^{\overrightarrow{s}}L_{\overrightarrow{\varPhi}}(Q)$ such that $J_\lambda(t\phi)<0$ for all $t>0$ small enough. Moreover for any $u\in B_\rho(0)$, we have 
      $$
               \begin{aligned}
           T_\lambda(u)\geqslant \dfrac{m_0}{N^{\varphi_{\min}^--1}} \|u\|^{\varphi^-_{\min}}_{\overrightarrow{\varPhi}}-\dfrac{\lambda c_1^{q^-}}{q^-}\|u\|^{q^-}_{\overrightarrow{\varPhi}}.
                 \end{aligned}
                            $$
      It follows that
      $$-\infty<c:=\inf\limits_{\overline{B_\rho(0)}} T_\lambda<0.$$   
      We let now $0<\varepsilon <\inf\limits_{\partial  B_\rho(0)}  T_\lambda -  \inf\limits_{B_\rho(0)} T_\lambda.$    Applying Theorem $\ref{anek}$ to the functional 
      $T_\lambda : \overline{B_\rho(0)}\longrightarrow \R$, we find $u_\varepsilon \in \overline{B_\rho(0)}$ such that 
       $$
            \left\{ 
                 \begin{array}{clclc}
               T_\lambda(u_\varepsilon)&<\inf\limits_{\overline{B_\rho(0)}} T_\lambda+\varepsilon,& \\\\
                 T_\lambda(u_\varepsilon)&< T_\lambda(u)+\varepsilon ||u-u_\varepsilon||_{\overrightarrow{\varPhi}},& \text{  } u\neq u_\varepsilon.
                 \end{array}
                 \right. 
              $$
       Since  $T_\lambda(u_\varepsilon)\leqslant  \inf\limits_{\overline{B_\rho(0)}} T_\lambda+\varepsilon\leqslant \inf\limits_{B_\rho(0)} T_\lambda+\varepsilon < \inf\limits_{\partial  B_\rho(0)}  T_\lambda$, we deduce $u_\varepsilon  \in B_\rho(0)$. 
       
       Now, we define $\Lambda_\lambda :  \overline{B_\rho(0)}\longrightarrow \R$ by 
       $$\Lambda_\lambda(u)=T_\lambda(u)+\varepsilon||u-u_\varepsilon||_{\overrightarrow{\varPhi}}.$$
       It's clear that $u_\varepsilon$ is a minimum point of $\Lambda_\lambda$ and then
       $$\dfrac{\Lambda_\lambda(u_\varepsilon+t v)-\Lambda_\lambda(u_\varepsilon)}{t}\geqslant 0$$ 
       for small $t>0$, and any $v\in B_\rho(0).$ The above relation yields 
           $$\dfrac{T_\lambda(u_\varepsilon+t v)-T_\lambda(u_\varepsilon)}{t}+\varepsilon||v||_{\overrightarrow{\varPhi}}\geqslant 0.$$
           Letting $t\rightarrow$ it follows that $\left\langle T'_{\lambda}(u_\varepsilon),v\right\rangle +\varepsilon ||v||_{\overrightarrow{\varPhi}}>0$ and we infer that $$||T'_{\lambda}(u_\varepsilon)||_{\overrightarrow{\varPhi},*}\leqslant \varepsilon.$$
            We deduce that there exists a sequence $\left\lbrace v_n\right\rbrace \subset B_\rho(0)$ such that 
            \begin{equation}\label{an10}
            J_\lambda(v_n) \longrightarrow c \text{ and } J'_\lambda(v_n)\longrightarrow 0.
            \end{equation}
         It is clear that $\left\lbrace v_n\right\rbrace $ is bounded in $W_0^{\overrightarrow{s}}L_{\overrightarrow{\varPhi}}(Q)$. Thus, there exists $v\in W_0^{\overrightarrow{s}}L_{\overrightarrow{\varPhi}}(Q)$, such that up to a subsequence   $\left\lbrace v_n\right\rbrace $ converges weakly to $v$ in $W_0^{\overrightarrow{s}}L_{\overrightarrow{\varPhi}}(Q)$. Actually with similar arguments to those used of the end of Theorem $\ref{anth1}$, we can show that $\left\lbrace v_n\right\rbrace $ is converges strongly to $v$ in $W_0^{\overrightarrow{s}}L_{\overrightarrow{\varPhi}}(Q)$. Thus, by   $(\ref{an10})$
         $$T_\lambda(v)=c <0~~\text{ and }~~ T'_\lambda(v)=0.$$
         Then, $v$ is a nontrivial weak solution for Problem \hyperref[P]{$(P_a)$}. This complete the proof.
             \end{proof}

   \section*{Conflict of Interests} No potential conflict of interest was reported by the authors.                   
                     
   \section*{Data availability Data} Sharing not applicable to this article as no
   datasets were generated or analyzed during the current study.                
                     
  \section*{Disclosure statement}
My manuscript has no associate data.

\end{document}